\documentclass[a4paper,reqno]{amsart}

\usepackage[utf8]{inputenc}
\usepackage{amssymb}
\usepackage{enumitem}
\usepackage{color}
\usepackage{mathrsfs}
\usepackage{mathtools}
\setlist[enumerate]{label=\emph{(\roman*)}}

\usepackage{hyperref}

\newtheorem{theorem}{Theorem}[section]
\newtheorem{corollary}[theorem]{Corollary}
\newtheorem{lemma}[theorem]{Lemma}
\newtheorem{proposition}[theorem]{Proposition}

\theoremstyle{definition}
\newtheorem{definition}[theorem]{Definition}
\newtheorem{remark}[theorem]{Remark}

\numberwithin{equation}{section}
\newcommand\RR{\mathbb{R}}

\newcommand\ud{\, \mathrm{d}}
\newcommand\e{\varepsilon}

\newcommand\md{\textnormal{Mod}}
\newcommand\vve{\vec \e\,}
\newcommand\vev{\vec \phi\,}
\newcommand\vvep{\vec \e_\perp}
\newcommand\tvvep{\vec {\tilde\e}_\perp}

\newcommand\cv{\cce}
\newcommand\ce{\check{\eta}}
\newcommand\czk{{\check{z}}_{k}}
\newcommand\cbk{{\check{\ell}}_{k}}

\newcommand\cce{\check{\e}}
\newcommand\eun{\mathbf{e}_1}
\newcommand\eN{\mathbf{e}_N}

\newcommand\TD{T_\delta}
\newcommand\TS{T_*}
\newcommand\Tu{t_1}
\newcommand\Td{t_2}
\newcommand\ENE{{H^1\times L^2}}

\newcommand\LL{\mathcal L}
\newcommand\BB{\mathcal G}

\DeclareMathOperator{\spn}{span}
\DeclareMathOperator{\Id}{Id}

\begin{document}

\parindent=0pt

\title[2-solitary waves of nonlinear damped Klein-Gordon equations]
{Description and classification of 2-solitary waves for nonlinear damped Klein-Gordon equations}

\author[R.~Côte]{Raphaël Côte}
\address{IRMA UMR 7501, Université de Strasbourg, CNRS, F-67000 Strasbourg, France}
\email{cote@math.unistra.fr}

\author[Y.~Martel]{Yvan Martel}
\address{CMLS, \'Ecole polytechnique, CNRS, Institut Polytechnique de Paris, F-91128 Palaiseau Cedex, France}
\email{yvan.martel@polytechnique.edu}

\author[X.~Yuan]{Xu Yuan}
\address{CMLS, \'Ecole polytechnique, CNRS, Institut Polytechnique de Paris, F-91128 Palaiseau Cedex, France}
\email{xu.yuan@polytechnique.edu}

\author[L. Zhao]{Lifeng Zhao}
\address{School of Mathematical Sciences,
University of Science and Technology of China, Hefei 230026, Anhui, China}
\email{zhaolf@ustc.edu.cn}

\thanks{R.~C. was partially supported by the French ANR contract MAToS ANR-14-CE25-0009-01.
Y.~M. and X.~Y. thank IRMA, Université de Strasbourg, for its hospitality.
L.~Z. thanks CMLS, \'Ecole Polytechnique, for its hospitality.
L.~Z. was partially supported by the NSFC Grant of China (11771415).}

\subjclass[2010]{35L71 (primary), 35B40, 37K40}

\begin{abstract}
We describe completely $2$-solitary waves related to the ground state of the nonlinear damped Klein-Gordon equation 
\[ \partial_{tt}u+2\alpha\partial_{t}u-\Delta u+u-|u|^{p-1}u=0 \]
on $\RR^N$, for $1\leq N\leq 5$ and energy subcritical exponents $p>2$. The description is twofold.

First, we prove that $2$-solitary waves with same sign do not exist. Second, we construct and classify the full family of $2$-solitary waves in the case of opposite signs.
Close to the sum of two remote solitary waves, it turns out that only the components of the initial data in the unstable direction of each ground state are relevant in the large time asymptotic behavior of the solution.
In particular, we show that $2$-solitary waves have a universal behavior: the distance between the solitary waves is asymptotic to $\log t$ as $t\to \infty$.
This behavior is due to damping of the initial data combined with strong interactions between the solitary waves.
\end{abstract}

\maketitle

\section{Introduction}

\subsection{Setting of the problem}
We consider the nonlinear focusing damped Klein-Gordon equation
\begin{equation}\label{nlkg}
\partial_{tt}u+2\alpha\partial_{t}u-\Delta u+u-f(u)=0 \quad (t,x)\in \RR\times\RR^N,
\end{equation}
where $f(u)= |u|^{p-1}u$, $\alpha>0$, $1\le N\leq 5$, and the exponent $p$ corresponds to the energy sub-critical case, \emph{i.e.}
\begin{equation}\label{on:p}
2<p<\infty \text{ for $N=1,2$ and } 2<p<\frac{N+2}{N-2} \text{ for $N=3,4,5$}.
\end{equation}
The restriction $p>2$ is discussed in Remark~\ref{rmkp}.

This equation also rewrites as a first order system for $\vec u=(u,\partial_t u)=(u,v)$
\begin{equation*}
\left\{\begin{aligned}
\partial_t u & = v\\
\partial_t v & = \Delta u - u + f(u) - 2 \alpha v.
\end{aligned}\right.
\end{equation*}
It follows from \cite[Theorem~2.3]{BRS} that the Cauchy problem for~\eqref{nlkg} is locally well-posed in the energy space: for any initial data
$(u_0,v_0)\in H^1(\RR^N)\times L^2(\RR^N)$, there exists a unique (in some class) maximal solution
$u\in C([0,T_{\max}),H^1(\RR^N))\cap C^1([0,T_{\max}),L^2(\RR^N))$ of \eqref{nlkg}. Moreover, if the maximal time of existence $T_{\max}$ is finite, then
$\lim_{t\uparrow T_{\max}} \|\vec u(t)\|_\ENE=\infty$.

Setting $F(u)= \frac{1}{p+1}|u|^{p+1}$ and
\[
E(\vec u)=\frac 12 \int_{\RR^N} \big\{ |\nabla u|^2 + u^2 + (\partial_t u)^2
- 2 F(u)\big\} \ud x,
\]
for any $H^1\times L^2$ solution $\vec u$ of \eqref{nlkg}, it holds
\begin{equation}\label{energy}
E(\vec u(t_2))-E(\vec u(t_1)) = -2 \alpha \int_{t_1}^{t_2} \|\partial_t u(t)\|_{L^2}^2 \ud t.
\end{equation}

In this paper, we are interested in the dynamics of $2$-solitary waves related to the ground state $Q$, which is the unique positive, radial $H^1$ solution of
\begin{equation}\label{eq:elliptic}
-\Delta Q + Q - f(Q)=0, \quad x\in \RR^N.
\end{equation}
(See~\cite{BL,K}.)
The ground state generates the stationary solution $\vec Q=\big(\begin{smallmatrix} Q\\0\end{smallmatrix}\big)$ of~\eqref{nlkg}.
The function $-\vec Q$ as well as any translate $\vec Q(\cdot-z_0)$ are also solutions of~\eqref{nlkg}.

\bigskip

The question of the existence of multi-solitary waves for~\eqref{nlkg} was first addressed by Feireisl in~\cite[Theorem 1.1]{F98}, under suitable conditions on $N$ and $p$, for an even number of solitary waves with specific geometric and sign configurations.
His construction is based on variational and symmetry arguments to treat the instability direction of the solitary waves.
The goal of the present paper is to fully understand $2$-solitary waves by proving non-existence, existence and classification results using dynamical arguments.

\subsection{Main results} 
First let us introduce a few basic notation. Let $\{\eun,\ldots,\eN\}$ denote the canonical basis of $\RR^N$.
We denote by $\mathcal B_{\RR^N}(\rho)$ (respectively, $\mathcal S_{\RR^N}(\rho)$) the open ball (respectively, the sphere) of $\RR^N$ of center the origin and of radius $\rho>0$, for
the usual norm $|\xi|= (\sum_{j=1}^N \xi_j^2 )^{1/2}$.
We denote by $\mathcal B_\ENE(\rho)$ the ball of $H^1\times L^2$ of center the origin and of radius $\rho>0$
for the norm $\|\big( \begin{smallmatrix} \e\\ \eta \end{smallmatrix}\big)\|_\ENE
=\big(\| \e\|_{H^1}^2+\| \eta\|_{L^2}^2\big)^{1/2}$.
We denote $\langle \cdot, \cdot \rangle$ the $L^2$ scalar product for real valued functions $u_i$ or vector-valued functions $\vec u_i = (u_i,v_i)$ ($i=1,2$):
\[ \langle u_1, u_2 \rangle :=  \int u_1(x) u_2(x) \ud x, \quad \langle \vec u_1, \vec u_2 \rangle :=  \int u_1(x) u_2(x) \ud x +\int v_1(x) v_2(x) \ud x. \]

It is well-known that the operator
\begin{equation*}
\LL = -\Delta+1-p Q^{p-1}
\end{equation*}
appearing after linearization of equation~\eqref{nlkg} around $\vec Q$,
has a unique negative eigenvalue $- \nu_0^2$ ($\nu_0>0$). We denote by $Y$ the corresponding normalized eigenfunction
(see Lemma~\ref{le:L} for details and references).
In particular, it follows from explicit computations that setting
\begin{equation*}
\nu^\pm = - \alpha \pm \sqrt{\alpha^2+\nu_0^2}\quad\text{and}\quad
\vec Y^\pm = \begin{pmatrix}
Y\\ \nu^\pm Y
\end{pmatrix},
\end{equation*}
the function $\vve^{\pm}(t,x) = \exp(\nu^\pm t) \vec Y^\pm(x)$ is solution of the linearized problem
\begin{equation}\label{eq:lin}
\left\{\begin{aligned}
\partial_t \e & = \eta\\
\partial_t \eta & = -\LL \e - 2 \alpha \eta.
\end{aligned}\right.
\end{equation}
Since $\nu^+>0$, the solution $\vve^+$ illustrates the exponential instability
of the solitary wave in positive time.
In particular, we see that the presence of the damping $\alpha>0$ in the equation
does not remove the exponential instability of the Klein-Gordon solitary wave.
An equivalent formulation of instability is obtained by setting
\begin{equation*}
\zeta^\pm = \alpha \pm \sqrt{\alpha^2+\nu_0^2}
\quad\text{and}\quad
\vec Z^\pm = \begin{pmatrix}
\zeta^\pm Y\\ Y
\end{pmatrix}
\end{equation*}
and observing that for any solution $\vve$ of~\eqref{eq:lin},
\begin{equation*}
a^\pm = \langle \vve , \vec Z^\pm\rangle \quad \text{satisfies} \quad
\frac{\ud a^\pm} {\ud t} = \nu^\pm a^\pm.
\end{equation*}

We start with the definition of $2$-solitary waves.
\begin{definition}\label{def:2sol}
A solution $\vec u\in \mathcal C([T,\infty),H^1\times L^2)$ of~\eqref{nlkg}, for some $T\in \RR$, is called a \emph{$2$-solitary wave} if there exist
$\sigma_1, \sigma_2=\pm 1$, a sequence $t_{n}\to \infty$
and a sequence $\left(\xi_{1,n},\xi_{2,n}\right)\in \mathbb{R}^{2N}$ such that
\begin{equation*}
\lim_{n\to\infty}\bigg\{\Big\|u(t_{n})-\sum_{k=1,2}\sigma_{k}Q(\cdot-\xi_{k,n})\Big\|_{H^1}
+\|\partial_{t}u(t_{n})\|_{L^{2}}\bigg\}=0
\end{equation*}
and $\lim_{n\to \infty} \left|\xi_{1,n}-\xi_{2,n}\right|\to \infty$.
\end{definition}
\begin{remark}
We observe that if $u$ is a global solution of~\eqref{nlkg} satisfying
\begin{equation}\label{alter}
\lim_{t\to \infty} \Big\|u(t)-\sum_{k=1,2} \sigma_k Q(\cdot-\xi_k(t))\Big\|_{H^1}=0
\end{equation}
where $\lim_{t\to \infty} |\xi_1(t)-\xi_2(t) |\to \infty$,
then $u$ is a $2$-solitary wave.
Indeed, it follows from~\eqref{energy} and~\eqref{alter} that $t\mapsto E(\vec u(t))$ is lower bounded. Thus, from~\eqref{energy},
it holds $\int_0^\infty \|\partial_t u(t)\|_{L^2}^2 \ud t<\infty$ and so $\lim_{n\to \infty} \|\partial_{t}u(t_{n})\|_{L^{2}} = 0$ for some sequence $t_{n}\to \infty$.
\end{remark}

Our first result concerns the non-existence of $2$-solitary waves with same signs.
\begin{theorem}\label{th:1}
There exists no $2$-solitary wave of~\eqref{nlkg} with $\sigma_1=\sigma_2$.
\end{theorem}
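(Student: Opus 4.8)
The plan is to argue by contradiction using the dissipation identity~\eqref{energy} together with a coercivity/monotonicity analysis of the energy near $\vec Q$. Suppose $\vec u$ is a $2$-solitary wave with $\sigma_1=\sigma_2$; after possibly replacing $u$ by $-u$ we may take $\sigma_1=\sigma_2=+1$. Since, as in the Remark, $\int_0^\infty\|\partial_t u\|_{L^2}^2\,\ud t<\infty$ and $t\mapsto E(\vec u(t))$ is nonincreasing, $E(\vec u(t))$ converges to some limit $E_\infty$ as $t\to\infty$. The first step is to show that along the sequence $t_n$ (and in fact for all large $t$, after a modulation argument) the solution stays close to a sum of two widely separated translates of $Q$, so that $E(\vec u(t_n))\to 2E(\vec Q)$ up to an exponentially small interaction term: using $\mathcal L Q=\cdots$ and the exponential decay of $Q$, one gets $E\big(Q(\cdot-\xi_1)+Q(\cdot-\xi_2)\big)=2E(\vec Q)-c\,\langle Q(\cdot-\xi_1),f(Q(\cdot-\xi_2))\rangle+\text{h.o.t.}$, and for \emph{same-sign} solitons the leading interaction term has a definite (negative) sign, so $E(\vec u(t_n))\le 2E(\vec Q)-\kappa e^{-|\xi_{1,n}-\xi_{2,n}|}<2E(\vec Q)$ for $n$ large.

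The second step is to establish the matching \emph{lower} bound $E(\vec u(t))\ge 2E(\vec Q)-o(1)$ valid for \emph{all} large $t$, which contradicts the strict upper bound once we know $E$ is nonincreasing. For this I would first upgrade the convergence along $t_n$ to convergence for all $t\to\infty$: this is where the instability comes in. By a standard modulation argument one writes, for $t$ in a maximal interval on which $u$ stays close to $\vec Q(\cdot-z_1)+\vec Q(\cdot-z_2)$, $\vec u(t)=\sum_k\vec Q(\cdot-z_k(t))+\vec\e(t)$ with orthogonality conditions killing the translation directions, and one tracks the unstable/stable coordinates $a_k^\pm=\langle\vec\e,\vec Z^\pm(\cdot-z_k)\rangle$; the linearized evolution gives $\frac{d}{dt}a_k^\pm\approx\nu^\pm a_k^\pm$ plus interaction and quadratic errors. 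Using a topological (Brouwer-type) selection on the unstable data $a_k^+$ one finds that \emph{if} the solution does not eventually leave a neighborhood of the two-soliton family — which it does not, since it is a $2$-solitary wave — then necessarily $a_k^+(t)\to 0$, the stable modes $a_k^-(t)\to0$ by integrating backward the stable ODE against the good $L^2$ bound, and the transverse part of $\vec\e$ is controlled by a coercivity estimate for $\mathcal L$ (localized around each bump, exactly as encoded in the negative eigenvalue $-\nu_0^2$ and the kernel $\mathrm{span}\,Q'$). Hence $\|\vec\e(t)\|_{H^1\times L^2}\to0$ and $|z_1(t)-z_2(t)|\to\infty$, i.e.\ \eqref{alter} holds for all $t$.

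The third step then gives the lower bound: with $\vec u(t)=\sum_k\vec Q(\cdot-z_k(t))+\vec\e(t)$ and $\|\vec\e(t)\|\to0$, expand $E(\vec u(t))=2E(\vec Q)+\tfrac12\big(\langle\mathcal L\e,\e\rangle+\|\eta\|_{L^2}^2\big)+\text{(interaction)}+o(\|\vec\e\|^2)$, and use that under the orthogonality conditions and the vanishing of $a_k^\pm$ the quadratic form $\langle\mathcal L\e,\e\rangle$ is $\ge -C(a^+)^2-C(a^-)^2-(\text{interaction})\ge -o(1)$; the same-sign interaction term is $\le0$ but exponentially small in $|z_1-z_2|\to\infty$, hence also $o(1)$. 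Therefore $E(\vec u(t))\ge 2E(\vec Q)-o(1)$ as $t\to\infty$, contradicting $E(\vec u(t))\le E(\vec u(t_n))\le 2E(\vec Q)-\kappa e^{-|\xi_{1,n}-\xi_{2,n}|}$ for $t\le t_n$, $n$ large. This contradiction proves the theorem.

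The main obstacle is the second step: controlling the solution for \emph{all} large times rather than just along $t_n$, and in particular ruling out that the unstable coordinates $a_k^+$ blow up or oscillate. This requires a careful bootstrap combining the modulation equations, the dissipative control $\int\|\partial_t u\|_{L^2}^2<\infty$, the localized coercivity of $\mathcal L$, and a topological argument to force $a_k^+\to0$; handling two solitons simultaneously means the interaction terms in the modulation system must be shown to be genuinely lower order (exponentially small in the separation), which is the technical heart of the argument.
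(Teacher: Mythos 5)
Your proposal has a genuine gap at exactly the point you flag as the "main obstacle," and it cannot be filled within your framework, because in the same-sign case the mechanism is the \emph{opposite} of what you assume: the solution does not stay close to a sum of two widely separated solitons. Same-sign solitons attract each other, and the separation must decrease until the interaction reaches a fixed macroscopic size. Trying to upgrade convergence along $t_n$ to convergence for all $t$ (your Step 2) is therefore doomed; the topological selection on $a_k^+$ you invoke belongs to the construction of Theorem~\ref{th:3} and has no role in this non-existence proof.

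Concretely, your contradiction rests on the assertion $E(\vec u(t_n))\le 2E(Q,0)-\kappa e^{-|\xi_{1,n}-\xi_{2,n}|}<2E(Q,0)$. This does not follow. The energy expansion in the decomposed variables (see~\eqref{eq:E:R}) is
\begin{equation*}
E(\vec u)=2E(Q,0)- c_1 g_0\, q(r) +O\big(r^{-1}q(r)\big)+O\big(q(r)\mathcal{N}\big)+O\big(\mathcal{N}^2\big)
\end{equation*}
(here $\sigma=1$), and the $O(\mathcal N^2)$ term coming from the remainder $\vve$ has no sign and is in no way dominated by the interaction $q(r)\sim e^{-r}$. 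Along $t_n$ both $\mathcal N(t_n)\to 0$ and $q(r(t_n))\to 0$ without any relative rate, so the sign of $E(\vec u(t_n))-2E(Q,0)$ is undetermined. And even granting a negative sign, the deficit $\kappa e^{-|\xi_{1,n}-\xi_{2,n}|}$ tends to $0$, so there is no contradiction with the lower bound $E(\vec u(t))\ge 2E(Q,0)$ --- which, incidentally, is immediate from~\eqref{energy} and $E(\vec u(t_n))\to 2E(Q,0)$ (your Steps 2--3 are not needed for it).

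The missing ingredient is the quantitative dynamics of the separation, which is what Proposition~\ref{le:BS} (same-sign case) supplies. Starting from a time $\TD$ with $q(r(\TD))\le\delta^2$ and $\mathcal N(\TD)\le\delta$, the refined modulation equation~\eqref{eq:lbis} for $\ell_k$, propagated through the bootstrap and the auxiliary quantity $R^+$ in Lemma~\ref{le:new}, gives the differential inequality $\frac{\ud}{\ud t}R^+\le -\frac{g_0}{2\alpha}<0$ (see~\eqref{eq:Rp} with $\sigma=1$) while $R^+\approx 1/q(r)>0$; this forces the bootstrap to exit in finite time. Since the estimates on $\mathcal N$ and $b$ are self-improving, the exit can only occur when the separation has shrunk so that $q(r(\TS))=\delta^{3/2}$, at which time still $\mathcal N(\TS)\lesssim\delta$. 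Plugging into~\eqref{eq:E:R}, the interaction term $-c_1 g_0\,\delta^{3/2}$ now strictly dominates the $O(\delta^2)$ and $O(\delta^{3/2}/|\log\delta|)$ errors, giving $E(\vec u(\TS))<2E(Q,0)$ --- a genuine, fixed-size drop, contradicting the monotone decrease of $E$ to $2E(Q,0)$. This attraction argument, not a convergence-for-all-$t$ argument, is the heart of the proof.
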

In the next result, we show that $2$-solitary waves with opposite sign satisfy a universal asymptotic behavior.

\begin{theorem}[Description of $2$-solitary waves] \label{th:2}
For any $2$-solitary wave $u$ of~\eqref{nlkg}, there exist $\sigma_1=\pm1$, $\sigma_2=-\sigma_1$, $T>0$ and
$t\in [T,\infty)\mapsto (z_1(t),z_2(t))$ such that for all $t\in [T,\infty)$,
\begin{equation}\label{eq:th:2}
\Big\|u(t)-\sum_{k=1,2}\sigma_{k}Q(\cdot- z_k(t))\Big\|_{H^1}
+\|\partial_{t}u(t)\|_{L^{2}}\lesssim t^{-1},
\end{equation}
and for some constant $c_0=c_0(N)$,
\begin{equation}\label{eqq:th:2}
\lim_{t\to \infty} \left\{|z_1 (t)-z_2 (t)|- \left(\log t-\frac 12 (N-1) \log\log t+c_0\right)\right\}= 0.
\end{equation}
Moreover, there exists $\omega^\infty \in \mathcal{S}_{\RR^N}(1)$ such that
\begin{equation}\label{eqqq:th:2}
\lim_{t\to \infty} \frac{z_1(t)-z_2(t)}{\log t} =2\lim_{t\to \infty} \frac{z_1(t)}{\log t} 
= -2\lim_{t\to \infty} \frac{z_2(t)}{\log t} =\omega^\infty.
\end{equation}
\end{theorem}

Finally, we describe the full family of $2$-solitary waves for initial data close to the sum of two remote solitary waves.

\begin{theorem}[Classification of $2$-solitary waves] \label{th:3}
There exist $C,\delta>0$ and a Lipschitz map
\begin{equation*}
H: (\RR^N\setminus\bar{\mathcal{B}}_{\RR^N}(10|\log \delta|))\times \mathcal B_\ENE(\delta) \to \RR^2, \quad
(L,\vev) \mapsto H(L,\vev) 
\end{equation*}
such that
\begin{equation*}
|H(L,\vev)|< C \left( e^{-\frac L{2}}+\|\vev\|_{H^1\times L^2}\right),
\end{equation*}
with the following property. Given any $L, \vev, h_1,h_2$ such that
\[ |L| > 10|\log \delta|, \quad \| \vev \|_{H^1 \times L^2} < \delta, \quad |h_1| + |h_2| < \delta, \]
%\[ (L,\vev)\in (\RR^N\setminus\bar{\mathcal{B}}_{\RR^N}(10|\log \delta|))\times \mathcal B_\ENE(\delta),\quad
%(h_{1},h_{2})\in \mathcal B_{\RR^2}(\delta), \]
the  solution $\vec u$ of~\eqref{nlkg} with initial data
\begin{equation*}
\vec{u}(0)=
\left(\vec Q+h_1\, \vec{Y}^+\right) \left( \cdot-\frac L2 \right)
-\left(\vec Q+h_2\, \vec{Y}^+\right) \left (\cdot+\frac L2 \right)
+\vev
\end{equation*}
is a $2$-solitary wave if and only if $(h_1,h_2)=H(L,\vev)$.
\end{theorem}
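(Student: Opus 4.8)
The plan is to combine the modulation analysis of $\vec u$ near the two-soliton configuration---as in the proofs of Theorems~\ref{th:1}--\ref{th:2}---with a fixed-point selection of the two scalar data components along the unstable directions $\vec Y^{+}(\cdot-L/2)$ and $\vec Y^{+}(\cdot+L/2)$; the point is that inside a fixed tube around the two-soliton manifold the only way a solution can fail to be a $2$-solitary wave is the exponential growth of the two unstable modes $a_1^{+},a_2^{+}$, so that tuning these away fixes $(h_1,h_2)$. Fix a small $\rho>0$; while $\vec u(t)$ stays in the $\rho$-tube around $\{\sigma_1\vec Q(\cdot-y_1)+\sigma_2\vec Q(\cdot-y_2):\ |y_1-y_2|\ \text{large}\}$ (with $\sigma_1=+1$, $\sigma_2=-1$), write $\vec u(t)=\sigma_1\vec Q(\cdot-z_1(t))+\sigma_2\vec Q(\cdot-z_2(t))+\vve(t)$ with $z_1,z_2\in\RR^N$ fixed by the usual orthogonality conditions, set $a_k^{\pm}(t):=\langle\vve(t),\vec Z^{\pm}(\cdot-z_k(t))\rangle$, and let $\BB(t)\simeq\|\vvep(t)\|_\ENE^{2}$ be an energy-type coercive functional controlling the part $\vvep$ of $\vve$ transverse to $\spn\{\vec Z^{\pm}(\cdot-z_k),\partial_{x_i}\vec Q(\cdot-z_k)\}$, so $\|\vve\|_\ENE^{2}\simeq\|\vvep\|_\ENE^{2}+\sum_{k}\bigl(|a_k^{+}|^{2}+|a_k^{-}|^{2}\bigr)$. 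As in Theorems~\ref{th:1}--\ref{th:2}, with $R_k^{\pm}:=\dot a_k^{\pm}-\nu^{\pm}a_k^{\pm}$ one has $|R_k^{\pm}|\lesssim\|\vve\|_\ENE^{2}+e^{-|z_1-z_2|}$ and the dissipative inequality
\[
\tfrac{\ud}{\ud t}\BB\le -c\,\BB+C\sum_{k}\bigl(|a_k^{+}|^{2}+|a_k^{-}|^{2}\bigr)+C\,e^{-2|z_1-z_2|},
\]
where $-c\,\BB$ comes from the damping $2\alpha\partial_t u$; moreover $z_1,z_2$ (together with the soliton velocities) solve damped forced modulation equations with forcing $\lesssim e^{-|z_1-z_2|}+\|\vve\|_\ENE^{2}$ whose leading interaction term is \emph{repulsive}, since $\sigma_1\sigma_2=-1$. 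Write $\delta_\star:=\|\vev\|_\ENE+e^{-|L|/2}$, so $e^{-|L|}\le\delta_\star^{2}$ and $z_1(0)=L/2+O(\delta_\star)$, $z_2(0)=-L/2+O(\delta_\star)$.

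I would then prove a trapping proposition: there is $C_0\gg1$ such that on the maximal interval $[0,T_{*})$ on which $\vec u$ stays in the $\rho$-tube and $\sum_k|a_k^{+}(t)|\le C_0\delta_\star$, the \emph{strictly improved} bounds $\|\vvep(t)\|_\ENE+\sum_k|a_k^{-}(t)|\le\tfrac{C_0}{2}\delta_\star$ and $|z_1(t)-z_2(t)|\ge|L|-C$ hold. These follow by feeding the bootstrap assumptions into the Gronwall form of the energy inequality (using $\int_0^{t}e^{-c(t-s)}e^{-2|z_1-z_2|(s)}\ud s\lesssim e^{-2|L|}\le\delta_\star^{4}$), into the \emph{stable} $a_k^{-}$-equation, and into the dissipative equations for $z_1-z_2$ and the velocities, the smallness of $\delta_\star$ (relative to $C_0$, $\rho$ and the structural constants) absorbing the quadratic self-interaction. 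Since $C_0\delta_\star\ll\rho$, the trapping region can only be left through $\sum_k|a_k^{+}(t)|=C_0\delta_\star$; there the exit is \emph{transversal} ($\nu^{+}|a_k^{+}|$ dominates $|R_k^{\pm}|\lesssim C_0^{2}\delta_\star^{2}$) and---by the mechanism behind Theorems~\ref{th:1}--\ref{th:2}---\emph{irreversible}, a solution that leaves every small neighborhood of the manifold not being a $2$-solitary wave. Integrating the $a_k^{+}$-equation forwards, any solution trapped on $[0,\infty)$ must satisfy
\[
a_k^{+}(0)=-\int_0^{\infty}e^{-\nu^{+}s}R_k^{+}(s)\ud s,\qquad \Bigl|\int_0^{\infty}e^{-\nu^{+}s}R_k^{+}(s)\ud s\Bigr|\lesssim C_0^{2}\delta_\star^{2},
\]
and, propagating $e^{-|z_1-z_2|(t)}\to0$ through the equations, it obeys $\|\vve(t)\|_\ENE\to0$ and $|z_1(t)-z_2(t)|\to\infty$; hence such a solution is a $2$-solitary wave (Definition~\ref{def:2sol}).

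It remains to compute $a_k^{+}(0)$ from the data and solve the above relation for $(h_1,h_2)$. From the explicit $\vec u(0)$ and the orthogonality defining $z_k(0)$---and using $\langle Y,\partial_{x_i}Q\rangle=0$ ($Y$ radial, $\partial_{x_i}Q$ odd), so $h_k\vec Y^{+}$ decouples from the translation modulation at leading order---one gets $a_k^{+}(0)=\sigma_k\gamma h_k+E_k(L,\vev,h)$ with $\gamma:=\langle\vec Y^{+},\vec Z^{+}\rangle=(\nu^{+}+\zeta^{+})\|Y\|_{L^{2}}^{2}=2\sqrt{\alpha^{2}+\nu_0^{2}}\,\|Y\|_{L^{2}}^{2}\neq0$, $|E_k|\lesssim\|\vev\|_\ENE+e^{-|L|}+(|h_1|+|h_2|)^{2}$, and $E_k$ Lipschitz in $h$ with constant $\lesssim\delta_\star+|h_1|+|h_2|$. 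By the previous step, the solution with data $(L,\vev,h)$ is a $2$-solitary wave iff
\[
\sigma_k\gamma h_k=-E_k(L,\vev,h)-\int_0^{\infty}e^{-\nu^{+}s}R_k^{+}(s;L,\vev,h)\ud s=:\Phi_k(h;L,\vev),\qquad k=1,2.
\]
Working in the space of modulated trajectories on $[0,\infty)$ whose $a_k^{+}$-component is solved backwards from $+\infty$ (which removes the circularity in $\Phi$), the map $h\mapsto\gamma^{-1}\Phi(h;L,\vev)$ is a contraction on a ball of radius $\lesssim\delta_\star$ in $\RR^{2}$: this reduces to the Lipschitz dependence of the trapped modulated decomposition on the data, \emph{uniformly in $t\in[0,\infty)$} and with small constant, obtained by applying the same coercivity/dissipation estimates to the difference of two trapped solutions (the unstable modes being slaved to $(\vvep,a_k^{-},z_k)$). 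Its unique fixed point $H(L,\vev)$ satisfies $|H(L,\vev)|\lesssim\delta_\star=\|\vev\|_\ENE+e^{-|L|/2}$, and is Lipschitz because $\Phi$ depends Lipschitz-continuously on $(L,\vev)$. If $(h_1,h_2)=H(L,\vev)$, the corresponding solution satisfies the selection relation, is thus trapped on $[0,\infty)$, hence is a $2$-solitary wave. Conversely, if $\vec u$ is a $2$-solitary wave then by Theorem~\ref{th:2} it lies in the trapping region for large $t$ with $a_k^{+}(t)\to0$; by the irreversibility of the exit it is trapped on all of $[0,\infty)$, so $(h_1,h_2)$ solves the selection relation and $(h_1,h_2)=H(L,\vev)$.

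The main obstacle is the quantitative \emph{uniform-in-time} Lipschitz estimate used above: one must control the difference of two trapped modulated solutions over the whole half-line with a constant small enough for the selection map to contract---a quantitative local uniqueness for the trapped (hence $2$-solitary) solution---which rests, as do the trapping and decay estimates, on the dissipative coercivity together with the fact that along trapped trajectories the two unstable parameters are determined by the remaining modulation data. A secondary delicate point, already present in the proofs of Theorems~\ref{th:1}--\ref{th:2}, is the irreversibility of the exit through the unstable modes, used to identify $2$-solitary waves with globally trapped solutions.
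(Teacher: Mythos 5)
Your proposal follows the paper's general modulation framework but organizes the construction differently, so let me first compare the two routes, then flag one genuine gap.

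\textbf{Comparison of organization.} The paper builds the map $H$ in three layers: a \emph{topological} (no-retraction / shooting) argument to produce, for each choice of ``stable'' data $\Gamma(0)\in\mathcal W_\delta^\perp$, at least one admissible pair $A^+(\Gamma(0))\in\RR^2$ on the unstable directions (Proposition~\ref{pr:exist}); a \emph{bootstrap on the difference} of two trapped trajectories, with the functionals $\check{\mathcal N}$, $\check b$, $\check{\mathcal F}$, to show that $A^+$ is unique and Lipschitz with small constant $\delta^{1/4}$ (Proposition~\ref{pr:lip}); and only then a small-ball contraction in $\RR^2$ to invert the change of coordinates $(\Omega,h)\mapsto(\Gamma,a^+)$ and produce $H$ (Lemma~\ref{le:initial} and Proposition~\ref{pr:implicit}). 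You propose instead a unified Lyapunov--Perron construction: solve the $a_k^+$ component ``backwards from infinity'' in a space of trapped trajectories, and read off $h$ from a scalar fixed-point equation. This is a legitimate alternative --- it folds existence and Lipschitz dependence into a single contraction --- but the hard step that makes it close is precisely the uniform-in-time Lipschitz comparison of two trapped solutions, i.e.\ the content of the paper's Proposition~\ref{pr:lip}. You correctly identify this as ``the main obstacle'' but leave it at the level of a heuristic; it is where most of the work lies, and the paper's difference-variable bootstrap supplies it.

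\textbf{A genuine gap: the ``only if'' direction.} Your appeal to ``irreversibility of exit'' to conclude that a $2$-solitary wave with the given initial data stays trapped on all of $[0,\infty)$ is not, as stated, a valid argument. Transversality at the boundary $\sum_k|a_k^+|=C_0\delta_\star$ only gives local escape; it does not by itself rule out a scenario in which the solution leaves the trapping window on an intermediate interval and re-enters later (where Theorem~\ref{th:2} guarantees it eventually sits with $a_k^+\to0$). The paper closes exactly this gap not by dynamical irreversibility but by the \emph{global energy dissipation constraint}: Corollary~\ref{cor:ener} gives $\int_0^\infty\|\partial_t u\|_{L^2}^2\lesssim q(r(0))+\mathcal N^2(0)$ for any $2$-solitary wave, and this forces $b$ to stay small for all $t\ge0$ in Proposition~\ref{le:BS} (via the estimate on the doubling time of $b$ and the relation $a_k^+\approx \tfrac{\zeta^+}{\zeta^-}a_k^-+C\langle\eta,Y_k\rangle$). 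Your sketch should incorporate this energy-dissipation argument; without it, the identification of ``$2$-solitary wave'' with ``globally trapped'' --- hence the necessity of $(h_1,h_2)=H(L,\vev)$ --- is not established.

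Two minor remarks: the sign factor in your expansion $a_k^+(0)=\sigma_k\gamma h_k+E_k$ is off (with the convention $\sigma_1=1,\sigma_2=-1$ and the $-h_2\vec Y^+(\cdot+L/2)$ in the data, one gets $a_k^+(0)\approx \gamma h_k$ for both $k$, consistent with the paper's \eqref{est:a-h}); and your bound $|R_k^\pm|\lesssim\|\vve\|_\ENE^2+e^{-|z|}$ should also carry the $\sum_k|\ell_k|^2$ term as in \eqref{eq:a}, though this does not change the scheme.
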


This result essentially means that locally around the sum of two sufficiently separated solitons with opposite signs, the initial data of $2$-solitary waves form a codimension-2 Lipschitz manifold (the unstable directions being directed by $\vec Y^+$ translated around each soliton).

\medskip

We refer to~\S\ref{S:2.4} for a formal discussion on the  dynamics of $2$-solitary waves of~\eqref{nlkg}
justifying the main results of Theorems~\ref{th:1}, \ref{th:2} and~\ref{th:3}.

\begin{remark}\label{rmkp}
We discuss the condition on $p$ in~\eqref{on:p}. The energy sub-criticality condition is necessary for the existence of solitary waves
and allows to work in the framework of finite energy solutions.
The condition $p>2$ could be waived for some of the above results, but it would complicate the analysis and weaken the results. Keeping in mind that the most relevant case is $p=3$, we will not pursue further here the question of lowering $p$.
\end{remark}

\subsection{Previous results}
The question of the long time asymptotic behavior of solutions of the damped Klein-Gordon equation in relation with the bound states
was addressed in several articles; see \emph{e.g.}~\cite{BRS,F94,F98,J,LZ}.
Notably, under some conditions on $N$ and $p$, results in~\cite{F98,LZ} state that for any sequence of time, any global bounded solution of~\eqref{nlkg} converges to a sum of decoupled bound states after extraction of a subsequence of times.
(Note that such result would allow us to weaken the definition of $2$-solitary wave given in Definition~\ref{def:2sol}; however, we have preferred  a  stronger definition valid in any case.)
In~\cite{BRS}, for radial solutions in dimension $N\geq 2$, the convergence of any global solution to one equilibrium is proved to hold for the whole sequence of time. As discussed in~\cite{BRS},
such results are closely related to the general soliton resolution conjecture for global bounded solutions of dispersive problems; see~\cite{DKM,DJKM}
for details and results related to this conjecture for the undamped energy critical wave equation.

The existence and properties of multi-solitary waves is a classical question for integrable models (see for instance~\cite{Miura} for the Korteweg-de Vries equation and \cite{ZS} for the $1$D cubic Schr\"odinger equation).
As mentioned above, \cite{F98} gave the first construction of such solutions for~\eqref{nlkg}.
Since then the same question has been addressed for various non-integrable and undamped nonlinear dispersive and wave equations.
We refer to~\cite{Co,CMMgn,CMkg,MMwave} for the generalized Korteweg-de Vries equation, the nonlinear Schr\"odinger equation, the Klein-Gordon equation and the wave equation, in situations where unstable ground states are involved. See also references therein for previous works related to stable ground states. In those works, the distance between two traveling waves is asymptotic to $Ct$ for $C>0$, as $t\to\infty$.
The more delicate case of multi-solitary waves with logarithmic distance is treated in~\cite{Jkdv,MN,TVNkdv,TVN} for Korteweg-de Vries and Schr\"odinger type equations and systems, both in stable and unstable cases. Note that the logarithmic distance in the latter works is non-generic  while it is the universal behavior for the damped equation \eqref{nlkg}.
See also~\cite{JJnon,JJ,JL} for works on the non-existence,   existence and   classification of radial two-bubble solutions for the energy critical wave equation in large dimensions.

The construction of (center-) stable manifolds in the neighborhood of unstable ground state was addressed in several situations, see \emph{e.g.} \cite{BJ,KS,KNS,MaMeNaRa,NS1}.

While the initial motivation and several technical tools  originate from some of the above mentioned papers, we point out that the present article is self-contained
except for the local Cauchy theory for~\eqref{nlkg} (see \cite{BRS}) and elliptic theory for \eqref{eq:elliptic} and its linearization (we refer to \cite{BL,CMkg,K}).

\bigskip

This paper is organized as follows.
Section~\ref{S:2} introduces all the technical tools involved in a dynamical approach to the $2$-solitary wave problem for~\eqref{nlkg}:
computation of the nonlinear interaction,
modulation, parameter estimates and energy estimates. Theorems~\ref{th:1} and~\ref{th:2} are proved in Section~\ref{S:3}. Finally, 
Theorem~\ref{th:3} is proved in Section~\ref{S:4}.

\subsection{Recollection on the ground state}
The ground state $Q$ rewrites $Q(x)=q\left(|x|\right)$ where $q>0$ satisfies
\begin{equation}
q''+\frac{N-1}{r}q'-q+q^{p}=0,\quad q'(0)=0,\quad \lim_{r\to \infty}q(r)=0.
\end{equation}
It is well-known and easily checked that for a constant $\kappa>0$,
for all $r>1$,
\begin{equation}\label{Qdec}
\left|q(r)-\kappa r^{-\frac{N-1}{2}}e^{-r}\right|+\left|q'(r)+\kappa r^{-\frac{N-1}{2}}e^{-r}\right|
\lesssim r^{-\frac{N+1}{2}}e^{-r}.
\end{equation}
Due to the radial symmetry, there hold the following cancellation (which we will use repetitively):
\begin{equation} \label{eq:Q_sym}
\forall i \ne j, \quad \int \partial_{x_i} Q(x) \partial_{x_j} Q(x) \ud x =0.
\end{equation}
Let
\[
 \LL = -\Delta+1-p Q^{p-1} ,\quad
\langle \LL \e,\e\rangle = \int \big\{|\nabla \e|^2+\e^{2} - p Q^{p-1} \e^2\big\}\ud x.
\]
We recall standard properties of the operator $\LL $ (see \emph{e.g.}~\cite[Lemma 1]{CMkg}).
\begin{lemma}\label{le:L}
\begin{enumerate}
\item \emph{Spectral properties.} The unbounded operator $\LL $ on $L^2$ with domain $H^2$ is self-adjoint, its continuous spectrum is $[1,\infty)$, its kernel is $\spn\{\partial_{x_j}Q : j= 1,\ldots,N\}$ and it has a unique negative eigenvalue $-\nu_{0}^{2}$, with corresponding smooth normalized radial eigenfunction $Y$ $(\|Y\|_{L^2}=1)$
Moreover, on $\RR^N$,
\begin{equation*}
\left|\partial^{\beta}_{x}Y(x)\right|\lesssim e^{-\sqrt{1+\nu_{0}^{2}}\left|x\right|}\quad \text{for any } \beta=\left(\beta_1 ,\ldots,\beta_N\right)\in \mathbb{N}^N.
\end{equation*}
\item \emph{Coercivity property.} There exists $c>0$ such that, for all $\e\in H^{1}$,
\begin{equation*}
\langle \LL \e,\e\rangle\ge c
\|\e\|_{H^{1}}^{2}-c^{-1}
\bigg(\langle \e,Y\rangle^{2} + \sum_{j=1}^N\langle \e,\partial_{x_j}Q\rangle^{2}\bigg).
\end{equation*}
\end{enumerate}
\end{lemma}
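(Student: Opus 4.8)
The plan is to establish Lemma~\ref{le:L}, which collects the standard spectral and coercivity properties of the linearized operator $\LL = -\Delta+1-pQ^{p-1}$. Most of this is classical Schrödinger-operator theory adapted to the ground state; I will sketch each part in turn.

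For part (i), the self-adjointness of $\LL$ on $L^2$ with domain $H^2$ is immediate since $pQ^{p-1}$ is a bounded (indeed exponentially decaying, by~\eqref{Qdec} and $p>2$) real potential, so $\LL$ is a bounded perturbation of $-\Delta+1$. The essential spectrum is then $[1,\infty)$ by Weyl's theorem, again because the potential decays to zero at infinity. The description of the kernel and the existence of a unique negative eigenvalue are the two nontrivial points: differentiating the elliptic equation~\eqref{eq:elliptic} in $x_j$ gives $\LL \partial_{x_j}Q = 0$, so $\spn\{\partial_{x_j}Q\} \subset \ker\LL$; the reverse inclusion and the simplicity of $0$ as well as the existence and uniqueness of the negative eigenvalue $-\nu_0^2$ rely on the nondegeneracy of the ground state and the fact that $Q>0$ is a Perron--Frobenius-type ground state for $-\Delta+1-pQ^{p-1}$ — one argues that the lowest eigenfunction must be of one sign and hence cannot be one of the $\partial_{x_j}Q$, which change sign; the count then follows from Sturm--Liouville/ODE analysis in the radial variable combined with the nondegeneracy. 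Rather than reproduce this, I would simply cite~\cite{BL,K,CMkg} as the excerpt already does. The pointwise exponential decay $|\partial^\beta_x Y(x)| \lesssim e^{-\sqrt{1+\nu_0^2}|x|}$ follows from elliptic regularity (bootstrapping on $\LL Y = -\nu_0^2 Y$, using smoothness of $Q$) together with an Agmon-type estimate: since $Y$ solves $(-\Delta + (1+\nu_0^2))Y = pQ^{p-1}Y$ and the right-hand side decays like $e^{-(p-1)|x|}Y$ with $(p-1)\ge 1 > 0$, a standard comparison/maximum principle argument against the radial supersolution $e^{-\sqrt{1+\nu_0^2}|x|}$ yields the bound for $\beta=0$, and then for all $\beta$ by differentiating the equation and reapplying elliptic estimates.

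For part (ii), the coercivity property, I would argue by contradiction and compactness. Suppose no such $c>0$ exists; then there is a sequence $\e_n\in H^1$ with $\|\e_n\|_{H^1}=1$, $\langle\e_n,Y\rangle\to 0$, $\langle\e_n,\partial_{x_j}Q\rangle\to 0$ for each $j$, and $\langle\LL\e_n,\e_n\rangle\to \ell \le 0$. Up to a subsequence $\e_n\rightharpoonup \e_\infty$ weakly in $H^1$; the potential term $pQ^{p-1}$ is a compact perturbation (multiplication by an exponentially decaying function maps weakly convergent $H^1$ sequences to strongly convergent $L^2$ sequences on all of $\RR^N$, by a standard localization + decay argument), so $\int pQ^{p-1}\e_n^2 \to \int pQ^{p-1}\e_\infty^2$. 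Weak lower semicontinuity of $\|\nabla\cdot\|_{L^2}^2 + \|\cdot\|_{L^2}^2$ then gives $\langle\LL\e_\infty,\e_\infty\rangle \le \liminf \langle\LL\e_n,\e_n\rangle = \ell \le 0$. On the other hand $\e_\infty$ is orthogonal to $Y$ and to all $\partial_{x_j}Q$, i.e.\ orthogonal to the negative eigenspace and to the kernel of $\LL$, hence $\langle\LL\e_\infty,\e_\infty\rangle \ge \lambda_1\|\e_\infty\|_{L^2}^2 \ge 0$ where $\lambda_1>0$ is the bottom of the spectrum of $\LL$ restricted to that orthogonal complement (this uses the spectral picture from part (i): below the essential spectrum $[1,\infty)$ there is only the simple eigenvalue $-\nu_0^2$ and the kernel, so the next spectral value is strictly positive, being either a positive eigenvalue or $\ge 1$). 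Therefore $\langle\LL\e_\infty,\e_\infty\rangle = 0$ and $\e_\infty = 0$. But then $\int pQ^{p-1}\e_n^2 \to 0$, so $\langle\LL\e_n,\e_n\rangle = \|\nabla\e_n\|_{L^2}^2 + \|\e_n\|_{L^2}^2 - \int pQ^{p-1}\e_n^2 \to 1 > 0$, contradicting $\ell\le 0$. This yields the coercivity with the stated orthogonality corrections.

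The main obstacle is part (i): the hard content is the nondegeneracy of the ground state ($\ker\LL$ is exactly $\spn\{\partial_{x_j}Q\}$) and the precise count of negative eigenvalues (exactly one, simple), which rely on delicate ODE/variational analysis specific to $Q$ and are genuinely nontrivial theorems of Kwong~\cite{K} and the spectral analysis in~\cite{CMkg}. Since the excerpt explicitly directs the reader to those references ``see e.g.~\cite[Lemma 1]{CMkg}'', I would not reprove them but only indicate how they combine with elementary functional analysis to give the full statement; the decay estimate on $Y$ and the coercivity in part (ii) are then self-contained consequences along the lines above.
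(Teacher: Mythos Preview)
Your proposal is correct, and in fact goes well beyond what the paper does: the paper does not prove Lemma~\ref{le:L} at all but simply states it with the pointer ``see \emph{e.g.}~\cite[Lemma 1]{CMkg}''. Your sketch --- Weyl's theorem for the essential spectrum, Kwong's nondegeneracy for the kernel, Perron--Frobenius for the simple negative eigenvalue, Agmon-type decay for $Y$, and a compactness/contradiction argument for the coercivity --- is the standard route and is exactly what lies behind the cited references; you have also correctly identified which steps are genuinely hard (the nondegeneracy and eigenvalue count from~\cite{K,CMkg}) and should be quoted rather than reproved.

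One small remark on your part~(ii) argument: the negation of the coercivity inequality as stated does not literally hand you a sequence with $\langle\e_n,Y\rangle\to 0$ and $\langle\e_n,\partial_{x_j}Q\rangle\to 0$; rather, taking $c=1/n$ and normalising $\|\e_n\|_{H^1}=1$ gives $\langle\LL\e_n,\e_n\rangle < \tfrac1n - n\big(\langle\e_n,Y\rangle^2+\sum_j\langle\e_n,\partial_{x_j}Q\rangle^2\big)$, and one then uses the uniform lower bound $\langle\LL\e_n,\e_n\rangle \ge -p\|Q^{p-1}\|_{L^\infty}$ to force the orthogonality defects to vanish. This is implicit in your write-up and does not affect the validity of the argument.
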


\section{Dynamics of two solitary waves}\label{S:2}
We prove in this section a general decomposition result close to the sum of two decoupled solitary waves.
Let any $\sigma_1=\pm 1$, $\sigma_2=\pm1$ and denote $\sigma=\sigma_1\sigma_2$.
Consider time dependent $\mathcal{C}^{1}$ parameters $(z_1 ,z_2 ,\ell_1 ,\ell_2) \in \mathbb{R}^{4N}$
with $\left|\ell_1 \right|\ll1$, $\left|\ell_2 \right|\ll1$ and $\left|z\right|\gg1$ where
\begin{equation*}
 z=z_1 -z_2 \quad\text{and}\quad \ell=\ell_1 -\ell_2.
\end{equation*}
Define the modulated ground state solitary waves, for $k=1,2$,
\begin{equation}\label{def:gs}
Q_{k}=\sigma_{k}Q(\cdot-z_{k})\quad {\rm{and}} \quad
\vec{Q}_{k}=\begin{pmatrix} Q_{k} \\ -({\ell}_{k}\cdot\nabla) Q_{k} \end{pmatrix}.
\end{equation}
Set
\begin{equation*}
R=Q_1 +Q_2 , \quad \quad
\vec{R}=\vec{Q}_1 +\vec{Q}_2 ,
\end{equation*}
and the nonlinear interaction term
\begin{equation}\label{def:G}
G=f(Q_1 +Q_2 )-f(Q_1)-f(Q_2).
\end{equation}
The following functions are related to the exponential instabilities around each solitary wave:
\begin{equation*}
Y_k=\sigma_{k} Y(\cdot-z_{k}),\quad
\vec Y_k^\pm= \sigma_{k} \vec Y^\pm (\cdot-z_{k}),\quad
\vec Z_k^\pm= \sigma_{k} \vec Z^\pm (\cdot-z_{k}).
\end{equation*}

\subsection{Nonlinear interactions}
A key of the understanding of the dynamics of $2$-solitary waves is the computation of the first order of the projections of the nonlinear interaction term $G$ on the directions $\nabla Q_1$ and $\nabla Q_2$ (see \emph{e.g.} \cite[Lemma~7]{TVN}).

\begin{lemma}\label{le:nonlin}
The following estimates hold for $|z|\gg1$.
\begin{enumerate}
\item\emph{Bounds.} For any $0<m'<m$,
\begin{gather}
\int |Q_1 Q_2 |^m \lesssim e^{-m' |z|},\label{tech1}\\
\int\left|F(R)-F(Q_1)-F(Q_2)-f(Q_1)Q_2-f(Q_2)Q_1\right|
 \lesssim e^{-\frac 54|z|}.\label{tech3}
\end{gather}
\item\emph{Sharp bounds.}
For any $m>0$,
\begin{gather}
\int |Q_1| |Q_2|^{1+m} \lesssim q(|z|), \label{tech100}\\
\|G\|_{L^2}\lesssim \|Q_1^{p-1}Q_2\|_{L^2}+ \|Q_1Q_2^{p-1}\|_{L^2}
\lesssim q(|z|).\label{tech2}
\end{gather}
\item\emph{Asymptotics.} It holds
\begin{equation}\label{new3}
\left|\langle f(Q_2),Q_1\rangle - \sigma   c_1 g_0 q(|z|)\right|\leq |z|^{-1} q(|z|)
\end{equation}
where
\begin{equation}\label{on:g0}
g_0= \frac 1{c_1} \int Q^{p}(x)e^{-x_1 }\ud x>0,\quad c_1=\|\partial_{x_1}Q\|_{L^2}^{2}.
\end{equation}
\item\emph{Sharp asymptotics.}
There exists a smooth function $g:[0,\infty)\to \RR$ such that, for any 
$0<\theta<\min(p-1,2)$ and $r>1$
\begin{equation}\label{on:gg}
|g(r)-g_0 q(r)|\lesssim r^{-1} q(r)\end{equation}
and
\begin{align}
\left|\langle G,\nabla Q_1\rangle - \sigma c_1 \frac{z}{|z|}g(|z|)\right|\lesssim e^{-\theta|z|},\label{on:G}\\
\left|\langle G,\nabla Q_2\rangle + \sigma c_1 \frac{z}{|z|}g(|z|)\right|\lesssim e^{-\theta|z|}.\label{on:G2}
\end{align}
\end{enumerate}
\end{lemma}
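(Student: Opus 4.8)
The plan is to prove Lemma~\ref{le:nonlin} by exploiting the exponential decay of $Q$ from~\eqref{Qdec} and the explicit asymptotic profile $q(r)\sim \kappa r^{-(N-1)/2}e^{-r}$. All statements reduce to estimating integrals of products of (translated) copies of $Q$, $\nabla Q$, and their nonlinear combinations, localized near $z_1$ and $z_2$ which are at distance $|z|\gg 1$ apart. Throughout I would use the crude pointwise bound $Q(x)+|\nabla Q(x)|\lesssim e^{-|x|}$, the sharper~\eqref{Qdec}, and the elementary convolution-type fact that for $a,b\geq 0$ with $a+b>N$ one has $\int e^{-a|x-z_1|}e^{-b|x-z_2|}\ud x \lesssim e^{-\min(a,b)|z|}$ (and more precisely, when $a>b$, $\lesssim q(|z|)^{b}$-type decay governed by the slower exponential at the far point), together with a Laplace-type asymptotic expansion for the regime where the two exponentials genuinely compete.

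First, for the \emph{bounds} in part (i): \eqref{tech1} follows from $|Q_1Q_2|^m\lesssim e^{-m|x-z_1|}e^{-m|x-z_2|}$ and the convolution bound, losing an arbitrarily small power to absorb the polynomial weights, which is exactly why $m'<m$. For~\eqref{tech3}, I would Taylor-expand $F(Q_1+Q_2)$ to second order: the zeroth and first order terms are $F(Q_1)+F(Q_2)+f(Q_1)Q_2+f(Q_2)Q_1$, and the remainder is controlled by $|Q_1|^{p-1}|Q_2|^2+|Q_1|^2|Q_2|^{p-1}$ (using $p>2$ so that $f$ is $C^2$ with the relevant Hölder modulus, or splitting $p-1\geq 1$). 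Since $p>2$, the exponent on the product is at least $p+1>3$, and distributing it as, say, $(p-1)+2\geq \tfrac{5}{2}+\tfrac{5}{2}$ is not quite enough — one needs to be slightly more careful and note the product decays like $e^{-(p+1)\min}$ at each center but globally like $q(|z|)^{\text{something}}$; the stated $e^{-\frac54|z|}$ is comfortably weaker than the truth, so a clean splitting $\int |Q_1|^{p-1}|Q_2|^2 \lesssim \|Q_1^{p-1}\|_{L^\infty(\text{near }z_2)}\cdots$ giving decay $e^{-2|z|}$ type, plus symmetric term, suffices. For part (ii), the \emph{sharp} bounds: \eqref{tech100} is the key mechanism — $\int |Q_1||Q_2|^{1+m}$ is dominated by the region near $z_2$ where $|Q_2|^{1+m}$ is of order one on a unit ball but $|Q_1|\sim q(|z|)$, so the integral is $\lesssim q(|z|)$ (the bulk of $|Q_2|^{1+m}$ being $L^1$), and this genuinely captures the right order since the slow exponential $e^{-|z|}$ from $Q_1$ dominates when $1+m>1$. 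Then~\eqref{tech2} follows by writing $G = f'(\xi)(Q_1+Q_2)$ appropriately, or more carefully $|G|\lesssim |Q_1|^{p-1}|Q_2| + |Q_1||Q_2|^{p-1}$ pointwise (this is where $p\geq 2$ matters, to control $f(a+b)-f(a)-f(b)$ by the mixed terms without an extra $|a+b|^{p-2}$ blowup), and then $\|Q_1^{p-1}Q_2\|_{L^2}^2 = \int Q_1^{2(p-1)}Q_2^2\lesssim q(|z|)^2$ by the same localization.

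The heart of the lemma is parts (iii) and (iv), the \emph{asymptotics}. For~\eqref{new3}, I would change variables to center at $z_2$: $\langle f(Q_2),Q_1\rangle = \sigma \int Q^p(y) Q(y+z)\ud y$ with $z=z_1-z_2$, $|z|\to\infty$. Writing $z = |z|\omega$ with $\omega\in\mathcal S_{\RR^N}(1)$, and using $Q(y+z) = q(|y+z|)$ with $|y+z| = |z| + \omega\cdot y + O(|y|^2/|z|)$, the profile~\eqref{Qdec} gives $Q(y+z)\sim \kappa|z|^{-(N-1)/2}e^{-|z|}e^{-\omega\cdot y}(1+O(|y|/|z|))$. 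Since $Q^p(y)$ is integrable with fast decay, the dominated region is $|y|\lesssim 1$ and we get $\int Q^p(y)Q(y+z)\ud y = q(|z|)\big(\int Q^p(y)e^{-\omega\cdot y}\ud y + O(|z|^{-1})\big)$. By radial symmetry $\int Q^p(y)e^{-\omega\cdot y}\ud y = \int Q^p(y)e^{-y_1}\ud y = c_1 g_0$ independently of $\omega$, giving~\eqref{new3}. For~\eqref{on:gg}–\eqref{on:G2}, one computes $\langle G,\nabla Q_1\rangle$; integrating by parts and using $\nabla f(Q_1) = f'(Q_1)\nabla Q_1$ and the elliptic equation, the leading contribution comes from $\langle f(Q_2),\nabla Q_1\rangle$ (the reciprocal term $\langle f(Q_1),\nabla Q_2\rangle$ being of lower order or absorbed), and then $\langle f(Q_2),\nabla Q_1\rangle = -\nabla_z \langle f(Q_2), Q(\cdot - z_1)\rangle$-type identity shows it equals $\sigma c_1 \frac{z}{|z|}g(|z|)$ for a smooth $g$ defined directly as a regularized version of this integral, with~\eqref{on:gg} following from the same Laplace asymptotics as~\eqref{new3}. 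The antisymmetry $\langle G,\nabla Q_2\rangle = -\langle G,\nabla Q_1\rangle + O(e^{-\theta|z|})$ comes from $\nabla_{z_1} + \nabla_{z_2}$ acting on the (almost) translation-invariant quantity $\int G$, modulo the non-leading self-interaction pieces.

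The main obstacle I anticipate is making the competing-exponentials Laplace asymptotic fully rigorous with the stated error $|z|^{-1}q(|z|)$ rather than merely $o(q(|z|))$: one must control the curvature correction $|y+z|-|z|-\omega\cdot y = O(|y|^2/|z|)$ uniformly, handle the transition between the region $|y|\lesssim |z|^{1/2}$ (where the Gaussian-type expansion is valid) and the far region $|y|\gtrsim |z|^{1/2}$ (where crude exponential bounds give $e^{-c|z|^{1/2}}$, negligible against $|z|^{-1}q(|z|)$), and verify that the polynomial prefactor errors in~\eqref{Qdec} — the $r^{-(N+1)/2}e^{-r}$ term — contribute exactly at order $|z|^{-1}q(|z|)$ and no worse. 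A secondary technical point is defining the function $g$ smoothly across all $r>1$ (not just asymptotically): the natural definition $g(r) := \frac{1}{c_1 r}\langle z, \text{(interaction integral)}\rangle$ evaluated at $z = r\mathbf{e}_1$ inherits smoothness from the smoothness of $Q$ and the analyticity of translation, but one should check the $\theta$-loss in~\eqref{on:G}: the $e^{-\theta|z|}$ error (with $\theta<\min(p-1,2)$) accounts for the genuinely quadratic-in-$G$ or $Q_1^{p-1}Q_2^{p-1}$-type remainders, which decay like $e^{-\min(p-1,2)|z|}$, strictly faster than $q(|z|)\sim |z|^{-(N-1)/2}e^{-|z|}$, so the claimed bound holds. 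I would organize the proof as: (a) elementary convolution estimates; (b) pointwise bound on $G$; (c) parts (i) and (ii) by direct estimation; (d) the Laplace asymptotic lemma computing $\int Q^p(y)\phi(y)e^{-\omega\cdot y}\ud y$-type integrals with $|z|^{-1}$ error; (e) deduce (iii); (f) integration by parts and the antisymmetry argument to deduce (iv).
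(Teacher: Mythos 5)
Your plan tracks the paper's proof quite closely: the crude exponential convolution bound for (i); localization near $z_2$ (where $|Q_2|$ is $O(1)$ and $|Q_1|\sim q(|z|)$) for the sharp $q(|z|)$ bounds in (ii) and (iii); the Laplace expansion $|y+z|\approx |z|+y\cdot z/|z|$ on $|y|\lesssim |z|$ plus radial symmetry to recognize $g_0$ in (iii); and a rotational-invariance argument to exhibit $g$ in (iv). Your alternative route to $g$ — noting that the interaction vector is the $z$-gradient of the rotationally invariant scalar $\int Q^p(y)Q(y+z)\ud y$, hence automatically of the form $c\,\frac{z}{|z|}g(|z|)$ — is a legitimate and slightly slicker variant of the paper's explicit reflection-plus-orthogonal-matrix computation.

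There is, however, a genuine gap in your justification of \eqref{on:G}. You assert the leading contribution is $\langle f(Q_2),\nabla Q_1\rangle$, with "the reciprocal term $\langle f(Q_1),\nabla Q_2\rangle$ being of lower order or absorbed." This is false: since $Q$ is even and $\nabla Q$ is odd, reflecting through the midpoint $\tfrac{z_1+z_2}{2}$ gives the exact identity $\langle f(Q_1),\nabla Q_2\rangle = -\langle f(Q_2),\nabla Q_1\rangle$, so the two are of precisely the same order. Worse, if one writes $G=f'(Q_1)Q_2 + R_1 - f(Q_2)$ with $R_1 := f(R)-f(Q_1)-f'(Q_1)Q_2$, and uses $f'(Q_1)\nabla Q_1=\nabla f(Q_1)$ together with integration by parts, one finds
\[\langle f'(Q_1)Q_2,\nabla Q_1\rangle=-\langle f(Q_1),\nabla Q_2\rangle=\langle f(Q_2),\nabla Q_1\rangle,\]
so these two pieces cancel identically and $\langle G,\nabla Q_1\rangle=\langle R_1,\nabla Q_1\rangle$ exactly. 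The whole content of \eqref{on:G} is therefore hidden in the pointwise estimate $|G-p|Q_1|^{p-1}Q_2|\lesssim |Q_1|^{p-2}Q_2^2+|Q_2|^{p-1}|Q_1|$, which requires a nontrivial cancellation: near $z_2$ the Taylor remainder $R_1=Q_2^2\int_0^1(1-s)f''(Q_1+sQ_2)\ud s$ is itself asymptotic to $f(Q_2)=Q_2^2\int_0^1(1-s)f''(sQ_2)\ud s$, so the potentially fatal $O(|Q_2|^p|\nabla Q_1|)\sim q(|z|)$ contribution from $R_1$ is cancelled by the explicit $-f(Q_2)$ in $G$, leaving a genuine $e^{-\theta|z|}$ remainder with $\theta<\min(p-1,2)$. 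If you omit this cancellation, the error in \eqref{on:G} would appear to be $O(q(|z|))$ — the same size as the claimed leading term — and the sharp asymptotic would not follow.
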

\begin{proof}
(i) By~\eqref{Qdec}, $|Q(y)|\lesssim e^{-|y|}$, and thus
\begin{equation*}
\int |Q_1 Q_2 |^m \ud y
\lesssim \int e^{-m |y|} e^{-m' |y+z|} \ud y
\lesssim e^{-m' |z|} \int e^{-(m-m')|y|} \ud y
\lesssim e^{-m' |z|}.
\end{equation*}
Then, we observe that
using $p\geq 2$, by Taylor expansion:
\begin{equation*}
\left|F(Q_1+Q_2)-F(Q_1)-F(Q_2)-f(Q_1)Q_2-f(Q_2)Q_1\right|\lesssim |Q_1Q_2|^{\frac 32},
\end{equation*}
which reduces the proof of \eqref{tech3} to applying \eqref{tech1} with $m=\frac 32$
and $m'=\frac 54$.

(ii) We estimate
\begin{align*}
&\int |Q_1| |Q_2|^{1+m} \ud x = \int Q(y-z) Q^{1+m}(y) \ud y\\
&\quad\lesssim q(|z|)\int_{|y|<\frac 34 |z|} e^{|y|} Q^{1+m}(y) \ud y
+e^{-|z|} \int_{|y|>\frac 34 |z|} e^{|y|} Q^{1+m}(y) \ud y\lesssim q(|z|).
\end{align*}
Next, using
\begin{equation*}
|G|\lesssim |Q_1|^{p-1} |Q_2|+|Q_1||Q_2|^{p-1},
\end{equation*}
and (using $p>2$)
\begin{align*}
&\int Q_1^2 |Q_2|^{2(p-1)} \ud x = \int Q^2(y-z) Q^{2(p-1)}(y) \ud y\\
&\quad\lesssim [q(|z|)]^2 \int_{|y|<\frac 34 |z|} e^{2|y|} Q^{2(p-1)}(y) \ud y
+e^{-2|z|} \int_{|y|>\frac 34 |z|} e^{2|y|} Q^{2(p-1)}(y) \ud y
\\&\quad \lesssim [q(|z|)]^2.
\end{align*}

(iii) We claim the following estimate
\begin{equation}\label{on:H}
\left|\int Q^p(y) Q(y+z)\ud y- c_1 g_0 \kappa|z|^{-\frac{N-1}2} e^{-|z|} \right|\lesssim |z|^{-1} q(|z|).
\end{equation}
Observe that~\eqref{new3} follows directly from~\eqref{on:H} and~\eqref{Qdec}.

Proof of~\eqref{on:H}.
First, for $|y|<\frac{3}{4}|z|$ (and so $|y+z|\geq |z|-|y|\geq \frac 14 |z|\gg 1$), we have using~\eqref{Qdec},
\begin{equation*}
\left|Q(y+z)-\kappa|y+z|^{-\frac{N-1}{2}}e^{-|y+z|}\right|
\lesssim \left|y+z\right|^{-\frac{N+1}{2}}e^{-|y+z|}\lesssim \left|z\right|^{-\frac{N+1}{2}}e^{-|z|}e^{|y|}.
\end{equation*}
In particular,
\begin{equation}\label{inter}
\bigg|\int_{|y|<\frac{3}{4}|z|}
Q^{p}(y)\Big[ Q(y+z)- \kappa |y+z|^{-\frac{N-1}{2}}e^{-|y+z|}\Big]\ud y\bigg|
\lesssim\left|z\right|^{-\frac{N+1}{2}}e^{-|z|}.
\end{equation}
Moreover, for $|y|<\frac{3}{4}|z|$, we have the expansions
\begin{align*}
&\left||y+z|^{-\frac{N-1}{2}}-|z|^{-\frac{N-1}{2}}\right|\lesssim |z|^{-\frac{N+1}{2}}|y|,\\
&\left||y+z|-|z|-\frac{y\cdot z}{|z|}\right|\lesssim |z|^{-1}|y|^{2},
\end{align*}
and so
\begin{equation*}
\left||y+z|^{-\frac{N-1}{2}}e^{-|y+z|}-|z|^{-\frac{N-1}{2}}e^{-|z|-\frac{y\cdot z}{|z|}}\right|
\lesssim |z|^{-\frac{N+1}{2}}e^{-|z|} (1+|y|^{2})e^{|y|}.
\end{equation*}
Inserted into~\eqref{inter}, this yields
\begin{equation*}
\left|\int_{|y|<\frac{3}{4}|z|} Q^{p}(y)\Big[Q(y+z)- \kappa |z|^{-\frac{N-1}{2}}e^{-|z|-\frac{y\cdot z}{|z|}}\Big]\ud y\right|
\lesssim\left|z\right|^{-\frac{N+1}{2}}e^{-|z|}.
\end{equation*}

Next, using~\eqref{Qdec}, we observe
\begin{equation*}
\int_{|y|>\frac{3}{4}|z|} Q^{p}(y) Q(y+z)\ud y \lesssim e^{-\frac 34p|z|},
\end{equation*}
and
\begin{equation*}
 \int_{|y|>\frac{3}{4}|z|}Q^{p}(y)e^{-|z|-\frac{y\cdot z}{|z|}}\ud y
\lesssim e^{-|z|} \int_{|y|>\frac{3}{4}|z|}Q^{p}(y) e^{|y|}\ud y\lesssim e^{-\frac 34p|z|}.
\end{equation*}

Gathering these estimates, we have proved
\begin{equation*}
\left|\int Q^p(y) Q(y+z)\ud y - \kappa|z|^{-\frac{N-1}{2}}e^{-|z|} \int Q^p(y) e^{-\frac{y\cdot z}{|z|}} \ud y\right|
\lesssim |z|^{-\frac{N+1}{2}}e^{-|z|}.
\end{equation*}
Last, the identity
$\int Q^{p}(y)e^{-\frac{y\cdot z}{|z|}}\ud y =\int Q^{p}(y)e^{-y_1 }\ud y$
(recall that $Q$ is radially symmetric) and the definition of $g_0$ imply~\eqref{on:H}.

(iv)
First, using the Taylor formula, it holds
\begin{equation*}
|G-p|Q_1|^{p-1}Q_2 |\lesssim |Q_1 |^{p-2}Q_2 ^{2}+|Q_2 |^{p-1}|Q_1 |.
\end{equation*}
Thus, using~\eqref{tech1}, we obtain for any $1<\theta<\min(p-1,2)$,
\begin{equation}\label{22av}
\left|\langle G,\nabla Q_1\rangle
-\sigma_1 \sigma_2  H(z)\right|
\lesssim \int Q^2(y)Q^{p-1}(y+z) \ud y \lesssim e^{-\theta|z|}
\end{equation}
where we set $H(z)= \int \nabla(Q^p)(y) Q(y+z) \ud y$.
Second, we claim that there exists a function $g:[0,\infty)\to \RR$ such that
$H(z)= c_1 \frac{z}{|z|} g(|z|)$.
Indeed, remark by using the change of variable $y=2x_1\eun-x$ that
\begin{align*}
H(r\eun)&=p\int \frac{y}{|y|} q'(|y|) q^{p-1}(|y|)q(|y+r\eun|) \ud y\\
&=p  \int \frac{y_1 \eun}{|y|} q'(|y|) q^{p-1}(|y|)q(|y+r\eun|) \ud y.
\end{align*}
Thus, we set
\begin{equation*}
g(r)=\frac{\eun \cdot H(r\eun)}{c_1}\quad \text{so that}\quad
H(r\eun)=c_1\eun g(r).
\end{equation*}
Let $\omega\in\mathcal{S}_{\RR^N}(1)$ be such that $z=|z|\omega$ and let $U$ be an orthogonal matrix
of size $N$ such that $U\eun=\omega$. Then, using the change of variable $y=Ux$,
\begin{align*}
H(z)&=p\int \frac{y}{|y|} q'(|y|) q^{p-1}(|y|)q(|y+z|) \ud y\\
&=p\int \frac{Ux}{|x|} q'(|x|) q^{p-1}(|x|)q(|x+|z|\eun|) \ud x
=U H(|z|\eun)=c_1 \frac{z}{|z|} g(z).
\end{align*}
Together with~\eqref{22av}, this proves~\eqref{on:G}. The proof of~\eqref{on:G2} is the same but it is important to notice the change of sign due to $H(-z)=-H(z)$.

Last, we observe that proceeding as in the proof of~\eqref{on:H}, it holds
\begin{equation*}
\left|\eun\cdot H(r\eun)
 - \kappa r^{-\frac{N-1}{2}}e^{-r} \int \partial_{x_1}(Q^p)(y) e^{-y_1} \ud y\right|
\lesssim r^{-\frac{N+1}{2}}e^{-r}.
\end{equation*}
Moreover, by integration by parts,
$\int \partial_{x_1}(Q^p)(y)e^{-y_1} \ud y 
= \int Q^p(y) e^{-y_1} \ud y,$
which proves~\eqref{on:gg}.
\end{proof}

\subsection{Decomposition around the sum of two solitary waves}
The following quantity measures the proximity of a function $\vec u =(u,v)$ to the sum of two distant solitary waves,
for $\gamma>0$,
\[
d(\vec u\, ;\gamma)=
\inf_{|\xi_1 -\xi_2| > |\log \gamma|}
\Big \|u-\sum_{k=1,2}\sigma_k Q(\cdot-\xi_k)\Big\|_{H^{1}}
+ \|v\|_{L^2}.
\]
We state a decomposition result for solutions of~\eqref{nlkg}.

\begin{lemma}\label{le:dec}
There exists $\gamma_0>0$ such that for any $0<\gamma<\gamma_0$, $T_1\leq T_2$, and any solution $\vec u=(u,\partial_t u)$ of~\eqref{nlkg} on $[T_1 ,T_2 ]$
satisfying
\begin{equation}\label{for:dec}
\sup_{t\in [T_1 ,T_2 ]} d(\vec u(t);\gamma) <\gamma,
\end{equation}
there exist unique $\mathcal C^1$ functions
\begin{equation*}
t\in [T_1 ,T_2 ]\mapsto ( z_1 ,z_2 ,\ell_1 ,\ell_2)(t)\in
\mathbb R^{4N},
\end{equation*}
such that the solution $\vec{u}$ decomposes on $[T_1 ,T_2 ]$ as
\begin{equation}\label{def:ee}
\vec u = \begin{pmatrix} u \\ \partial_t u\end{pmatrix}=
\vec Q_1+\vec{Q}_2 + \vve,\quad
\vve=\begin{pmatrix}\e \\ \eta \end{pmatrix}
\end{equation}
with the following properties on $[T_1 , T_2 ]$.

\begin{enumerate}
\item \emph{Orthogonality and smallness.} For any $k=1,2$, $j=1,\ldots,N$,
\begin{equation}\label{ortho}
\langle \e, \partial_{x_j} Q_{k}\rangle=\langle \eta,\partial_{x_j} Q_{k}\rangle=0
\end{equation}
and
\begin{equation}\label{eq:bound}
\|\vve\|_\ENE +\sum_{k=1,2} |\ell_k|+e^{-2|z|}\lesssim\gamma.
\end{equation}

\item \emph{Equation of $\vve$.}
\begin{equation}\label{syst_e}\left\{\begin{aligned}
\partial_t \e & = \eta + \md _{{\mathbf \e}}\\
\partial_t \eta &
= \Delta \e-\e+f(R+\e)-f(R)
-2\alpha\eta + \md_{\eta}+G
\end{aligned}\right.\end{equation}
where
\begin{align*}
&\mathrm{Mod}_{\e}=
\sum_{k=1,2}\left(\dot z_{k}-
{\ell}_{k}\right)\cdot\nabla Q_{k},\\
&\mathrm{Mod}_{\eta}=
\sum_{k=1,2}\big(\dot{\ell}_{k}+2\alpha{\ell}_{k}\big)\cdot\nabla Q_{k}-\sum_{k=1,2}({\ell}_{k}\cdot\nabla)(\dot{z}_{k}\cdot\nabla)Q_{k}.
\end{align*}
\item \emph{Equations of the geometric parameters.} For $k=1,2$,
\begin{align}
|\dot{z}_{k}-{\ell}_{k}|&\lesssim \|\vve \|^{2}_\ENE+\sum_{k=1,2}|\ell_{k}|^{2},
\label{eq:z}\\
|\dot{{\ell}}_{k}+2\alpha{\ell}_{k}|
&\lesssim \|\vve\|_\ENE^{2}+\sum_{k=1,2}|\ell_{k}|^{2}+ q(|z|).\label{eq:l}
\end{align}
\item\emph{Refined equation for $\ell_{k}$.}
For any $1<\theta<\min(p-1,2)$, $k=1,2$,
\begin{equation}\label{eq:lbis}
\Big| \dot{{\ell}}_{k}+2\alpha{\ell}_{k}- (-1)^{k} \sigma \frac{z}{|z|} g(|z|)\Big|
\lesssim \|\vve\|_\ENE^{2}+\sum_{k=1,2}|{\ell}_{k}|^{2}+e^{-\theta |z|}.
\end{equation}

\item\emph{Equations of the exponential directions.} 
Let 
\begin{equation}\label{def:a}
a_k^{\pm} = \langle \vve,\vec Z_{k}^{\pm}\rangle.
\end{equation}
Then,
\begin{equation}\label{eq:a}
\left| \frac \ud{\ud t} a_k^{\pm}- \nu^\pm a_{k}^{\pm}\right|\lesssim \|\vve \|_\ENE^{2}+\sum_{k=1,2}|{\ell}_{k}|^{2}+ q(|z|).
\end{equation}
\end{enumerate}
\end{lemma}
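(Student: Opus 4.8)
The plan is to obtain the decomposition by an implicit function theorem argument applied at each fixed time, followed by differentiation in time to extract the dynamical equations. First, fix $t$ and consider the map which, to parameters $(z_1,z_2,\ell_1,\ell_2)$ and a function $\vec u$ close to $\vec R$, associates the $4N$ scalars $\langle u - \sum_k Q_k, \partial_{x_j}Q_k\rangle$ and $\langle \partial_t u + \sum_k(\ell_k\cdot\nabla)Q_k, \partial_{x_j}Q_k\rangle$, $k=1,2$, $j=1,\dots,N$. At $\vec u = \vec R$ with the exact parameters these vanish; the differential with respect to $(z_1,z_2,\ell_1,\ell_2)$ is, to leading order, block-diagonal with diagonal blocks $-\langle \partial_{x_i}Q_k,\partial_{x_j}Q_k\rangle = -c_1\delta_{ij}$ (using $\eqref{eq:Q_sym}$ and the normalization $\eqref{on:g0}$), the off-diagonal interaction terms being $O(q(|z|))=o(1)$ by $\eqref{tech100}$. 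Hence the differential is invertible with uniformly bounded inverse for $|z|$ large and $\|\vve\|$ small, and the implicit function theorem yields unique $\mathcal C^1$ parameters and the decomposition $\eqref{def:ee}$ with the orthogonality $\eqref{ortho}$; the smallness $\eqref{eq:bound}$ follows because $d(\vec u(t);\gamma)<\gamma$ forces the best-fit translations to be within $O(\gamma)$ of $(z_1,z_2)$ and then $\|\vve\|+\sum|\ell_k| = O(\gamma)$, while $e^{-2|z|}\le e^{-2|\log\gamma|}=\gamma^2\lesssim\gamma$.

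Next, for the equation $\eqref{syst_e}$, I substitute $\vec u = \vec Q_1 + \vec Q_2 + \vve$ into $\eqref{nlkg}$ written as the first-order system. Using that $Q$ solves $\eqref{eq:elliptic}$, the linear and nonlinear terms combine: the first equation gives $\partial_t\e = \eta + \md_\e$ with $\md_\e = \sum_k(\dot z_k - \ell_k)\cdot\nabla Q_k$ exactly as stated (the time derivative of $Q_k=\sigma_k Q(\cdot - z_k)$ being $-\dot z_k\cdot\nabla Q_k$, balanced against the designed second component $-(\ell_k\cdot\nabla)Q_k$); the second equation produces $\Delta\e - \e + f(R+\e)-f(R) - 2\alpha\eta$ from the genuine dynamics, $+G$ from $f(R)-f(Q_1)-f(Q_2)$ via $\eqref{def:G}$ after using the elliptic equation for each $Q_k$, and $\md_\eta$ from differentiating the second components $-(\ell_k\cdot\nabla)Q_k$ in time, which gives exactly $\sum_k(\dot\ell_k + 2\alpha\ell_k)\cdot\nabla Q_k - \sum_k(\ell_k\cdot\nabla)(\dot z_k\cdot\nabla)Q_k$ once the $2\alpha v$ damping acting on $\vec Q_k$ is moved over. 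This is a bookkeeping computation with no real obstacle.

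The dynamical estimates $\eqref{eq:z}$--$\eqref{eq:a}$ all come from differentiating the orthogonality and spectral relations in time and using $\eqref{syst_e}$. Differentiating $\langle\e,\partial_{x_j}Q_k\rangle=0$ gives $\langle\partial_t\e,\partial_{x_j}Q_k\rangle = \dot z_k\cdot\langle\e,\nabla\partial_{x_j}Q_k\rangle$; inserting $\partial_t\e = \eta + \md_\e$, using the second orthogonality $\langle\eta,\partial_{x_j}Q_k\rangle=0$, the analogous computation with $\partial_t\eta$, and pairing $\md_\e,\md_\eta$ against $\partial_{x_j}Q_k$ (which reproduces the $-c_1(\dot z_k-\ell_k)$ and $-c_1(\dot\ell_k+2\alpha\ell_k)$ leading terms up to $O(q(|z|))$ interaction) yields a linear system for $(\dot z_k - \ell_k)$ and $(\dot\ell_k+2\alpha\ell_k)$ whose inhomogeneity consists of: quadratic terms $\langle f(R+\e)-f(R)-f'(R)\e,\cdot\rangle$ and cross terms giving $\|\vve\|^2 + \sum|\ell_k|^2$; the term $\langle G,\partial_{x_j}Q_k\rangle$, which is $O(q(|z|))$ by $\eqref{tech2}$ and, crucially, equals $(-1)^k\sigma c_1\frac{z}{|z|}g(|z|)$ up to $O(e^{-\theta|z|})$ by $\eqref{on:G}$--$\eqref{on:G2}$; and first-order-in-$\e$ terms like $\langle\e, \text{(something)}Q_k\rangle$ which are controlled by $\|\vve\|\,q(|z|)\lesssim\|\vve\|^2 + q(|z|)^2$. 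Solving the $O(1)$-invertible linear system gives $\eqref{eq:z}$, $\eqref{eq:l}$ and the refined $\eqref{eq:lbis}$; the difference between $\eqref{eq:l}$ and $\eqref{eq:lbis}$ is precisely whether one keeps the sharp asymptotic of $\langle G,\nabla Q_k\rangle$ from Lemma~\ref{le:nonlin}(iv) or merely its size bound. Finally, $\eqref{eq:a}$ follows by differentiating $a_k^\pm = \langle\vve,\vec Z_k^\pm\rangle$: the main term reproduces $\nu^\pm a_k^\pm$ because $\vve^\pm(t) = e^{\nu^\pm t}\vec Y^\pm$ solves the linearization $\eqref{eq:lin}$ and $\vec Z^\pm$ is the dual eigendirection (so $\langle\cdot,\vec Z^\pm\rangle$ diagonalizes the linear flow), while the remainder collects: the interaction $\langle G,Y_k\rangle = O(q(|z|))$, the potential difference $\langle (pR^{p-1} - pQ_k^{p-1})\e, \text{(}Y_k\text{ part)}\rangle = O(\|\vve\|\,q(|z|))$, the nonlinear remainder $O(\|\vve\|^2)$, the modulation contributions $\md_\e,\md_\eta$ paired with $\vec Z_k^\pm$ which are $O(\|\vve\|^2 + \sum|\ell_k|^2 + \|\vve\|q(|z|))$ using $\eqref{eq:z}$--$\eqref{eq:l}$, and the moving-frame term $\dot z_k\cdot\langle\vve,\nabla\vec Z_k^\pm\rangle = O(|\dot z_k|\|\vve\|) = O(\|\vve\|^2 + \sum|\ell_k|^2\cdot\|\vve\| + \ldots)$. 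The main technical obstacle is organizing all these error terms cleanly — in particular ensuring every cross term between $\e$ (or $\ell_k$) and the exponentially small interaction is absorbed into $\|\vve\|^2 + \sum|\ell_k|^2 + q(|z|)$ rather than leaving a linear-in-$\e$ term of size $q(|z|)$ — which is handled by the elementary inequality $2ab \le a^2 + b^2$ applied to $\|\vve\| \cdot q(|z|)$, and by the fact that $q(|z|)$ dominates $e^{-\theta|z|}$-type quantities only in the regime $\theta<1$, so one is careful to use $\eqref{tech2}$ for size and $\eqref{on:G}$ for the sharp form in their respective places.
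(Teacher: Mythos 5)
Your proposal follows essentially the same approach as the paper: modulation at each fixed time to obtain the decomposition with the orthogonality conditions \eqref{ortho}, direct substitution into \eqref{nlkg} to get \eqref{syst_e}, and differentiation in time of \eqref{ortho} and of $a_k^\pm$ combined with Lemma~\ref{le:nonlin} to obtain \eqref{eq:z}--\eqref{eq:a}. The only cosmetic difference is that you apply the implicit function theorem at once to all $4N$ parameters, whereas the paper proceeds in two stages (minimizing the $L^2$ distance to fix $(z_1,z_2)$, then solving a linear system for $(\ell_1,\ell_2)$), recording Lipschitz estimates \eqref{key1}--\eqref{key2} that are re-used later in Lemma~\ref{le:initial}; both routes yield the same conclusion.

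Two small slips in your narrative, neither affecting the argument: the sign in "$\langle G,\partial_{x_j}Q_k\rangle$ equals $(-1)^k\sigma c_1\tfrac{z}{|z|}g(|z|)$" should be $(-1)^{k+1}$ by \eqref{on:G}--\eqref{on:G2} (the $(-1)^k$ in \eqref{eq:lbis} appears only after this term is moved to the other side of the $=0$ relation obtained from $\tfrac{\ud}{\ud t}\langle\eta,\partial_{x_j}Q_k\rangle=0$); and the remark that $q(|z|)$ dominates $e^{-\theta|z|}$ "only in the regime $\theta<1$" is backwards — since $q(r)\sim \kappa r^{-(N-1)/2}e^{-r}$, one has $e^{-\theta|z|}\ll q(|z|)$ precisely for $\theta>1$, which is why \eqref{eq:lbis}, valid for $\theta\in(1,\min(p-1,2))$, refines \eqref{eq:l}.
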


\begin{remark}
We see on estimate~\eqref{eq:l} the damping of the Lorentz parameters $\ell_k$.
The more precise estimate~\eqref{eq:lbis} involves the nonlinear interactions which becomes preponderant for large time.
\end{remark}

\begin{proof}
Proof of (i).
The existence and uniqueness of the geometric parameters is proved for fixed time.
Let $0<\gamma\ll 1$.
First, for any $u\in H^1$ such that
\begin{equation}\label{on:u}
\inf_{|\xi_1 -\xi_2| > |\log\gamma|} \Big\|u-\sum_{k=1,2} \sigma_k Q(\cdot-\xi_k)\Big\|_{H^{1}}\leq \gamma,
\end{equation}
we consider $z_1(u)$ and $z_2(u)$ achieving the infimum
\begin{equation*}
\Big\|u-\sum_{k=1,2} \sigma_k Q(\cdot-z_k(u))\Big\|_{L^2}
=\inf_{|\xi_1 -\xi_2| > \frac{3}{4}|\log\gamma|} \Big\|u-\sum_{k=1,2} \sigma_k Q(\cdot-\xi_k)\Big\|_{L^2}.
\end{equation*}
Let
\begin{equation*}
\e(x) = u(x)- \sigma_1 Q(x-z_1(u))-\sigma_2 Q(x-z_2(u)),\quad
\|\e\|_{L^2}\leq \gamma.
\end{equation*}
By standard arguments since $|z_1(u) -z_2(u)| >\frac 34|\log\gamma|$, it holds
\begin{equation}\label{key4}
\langle \e, \partial_{x_j} Q(\cdot-z_1(u))\rangle= \langle \e, \partial_{x_j} Q(\cdot-z_2(u))\rangle=0.
\end{equation}
For $u$ and $\tilde u$ as in \eqref{on:u}, we compare the corresponding $z_k$, $\tilde z_k$ and
$\e$, $\tilde \e$.
First, for $\zeta$, $\tilde \zeta\in \RR^N$, setting $\check\zeta=\zeta-\tilde\zeta$, we observe the following estimates
\begin{align}
Q(\cdot-\zeta)-Q(\cdot-\tilde\zeta)
&=-(\check\zeta\cdot\nabla) Q(\cdot-\zeta)+O_{H^1}(|\check\zeta|^2), \label{eq:oo1}\\
\nabla Q(\cdot-\zeta)-\nabla Q(\cdot-\tilde\zeta)
&=-(\check\zeta\cdot\nabla^2) Q(\cdot-\zeta)+O_{H^1}(|\check\zeta|^2). \label{eq:oo2}
\end{align}
Thus, denoting $\check u=u-\tilde u$, $\check z_k=z_k-\tilde z_k$, $\check \e=\e-\tilde \e$,
we obtain
\begin{equation*}
\check u = \sum_{k=1,2} \sigma_k (\check z_k\cdot \nabla) Q(\cdot-z_k)
+\check \e + O_{H^1}(|\check z_1|^2+|\check z_2|^2).
\end{equation*}
(In the $O_{H^1}$, there is no dependence in $\check u$ or $\check \e$).
Projecting on each $\nabla Q(\cdot -z_k)$, using \eqref{key4} and the above estimates, we obtain
\begin{equation*}
|\check z_1|+|\check z_2|\lesssim 
\|\check u\|_{L^2} + (|\check z_1|+|\check z_2|)(e^{-\frac 12 |z|}+\|\tilde \e\|_{L^2} + |\check z_1|+|\check z_2|)
\end{equation*}
and thus, for $\gamma$, $\check z_1$ and $\check z_2$ small,
\begin{equation}\label{key1}
\|\check \e\|_{H^1} \lesssim \|\check u\|_{H^1}, \quad |\check z_1|+|\check z_2|\lesssim \|\check u\|_{L^2}.
\end{equation}
Therefore, for $\gamma$ small enough, this proves uniqueness and Lipschitz continuity of $z_1$ and $z_2$ with respect to $u$ in $L^2$.

Now, let $v\in L^2$ and $z_1$, $z_2$ be such that $\|v\|_{L^2}<\gamma$ and $| z_1 - z_2| >\frac 34 |\log \gamma|$. Set
\begin{equation*}
\eta(x) = v(x) + \sigma_1 (\ell_1\cdot \nabla) Q(x-z_1)+\sigma_2 (\ell_2 \cdot \nabla) Q(x-z_2).
\end{equation*}
Then, it is easy to check that the $2N$ conditions
\begin{equation}\label{ortho:v}
\langle \eta, \partial_{x_j} Q(\cdot-z_k)\rangle=0
\end{equation}
for $j=1,\ldots,N$, $k=1,2$,
are equivalent to a linear system in the components of $\ell_1$ and $\ell_2$ whose matrix a perturbation of the identity (for $\gamma$ small)  up to a multiplicative constant.
In particular, it is invertible and the existence and uniqueness of parameters $\ell_1(v,z_1,z_2),\ell_2(v,z_1,z_2)\in \RR^N$ satisfying
\eqref{ortho:v} and $|\ell_1|+|\ell_2|\lesssim \|v\|_{L^2}$ is  clear. Moroever, with similar notation as before, it holds
\begin{equation}\label{key2}
\|\check \eta\|_{L^2}+|\check \ell_1|+|\check \ell_2|\lesssim 
\|\check v\|_{L^2}+ |\check z_1|+|\check z_2|.
\end{equation}
Estimate~\eqref{eq:bound} is now proved.
In the rest of this proof, we formally derive the equations of $\vve$ and the geometric parameters from the equation of $u$. This derivation can be justified rigorously and used to prove by the Cauchy-Lipschitz theorem that the parameters are $\mathcal C^1$ functions of time
(see for instance \cite[Proof of~Lemma~2.7]{CM}).

Proof of (ii). First, by the definition of $\e$ and $\eta$,
\begin{equation*}
\partial_t\e =\partial_{t}u-\sum_{k=1,2}\partial_{t}Q_{k}
=\eta+\sum_{k=1,2}\left(\dot{z}_{k}-\ell_{k}\right)\cdot\nabla Q_{k}.
\end{equation*}
Second,
\begin{align*}
\partial_{t}\eta&=\partial_{tt}u+\sum_{k=1,2}\partial_{t}\left(\ell_{k}\cdot\nabla Q_{k}\right)\\
&=\Delta u-u+f(u)-2\alpha\partial_{t}u+\sum_{k=1,2}\dot{\ell}_{k}\cdot\nabla Q_{k}-\sum_{k=1,2}\left(\ell_{k}\cdot\nabla \right)\left(\dot{z}_{k}\cdot\nabla \right)Q_{k}.
\end{align*}
By~\eqref{def:ee}, $\Delta Q_{k}-Q_{k}+f(Q_{k})=0$ (from \eqref{eq:elliptic}) and the definition of $G$,
\begin{align*}
\Delta u-u+f(u)-2\alpha\partial_{t}u
&=\Delta\e-\e+f (R+\e )-f (R ) -2\alpha\eta\\
&\quad +2\alpha\sum_{k=1,2}\ell_{k}\cdot\nabla Q_{k}+G.
\end{align*}
Therefore,
\begin{align*}
\partial_t \eta & =
\Delta \e-\e+ f(R+\e)-f(R)-2\alpha\eta \\
&\quad +\sum_{k=1,2}\big(\dot{\ell}_{k}+2\alpha{\ell}_{k}\big)\cdot\nabla Q_{k}
-\sum_{k=1,2}({\ell}_{k}\cdot\nabla)(\dot{z}_{k}\cdot\nabla)Q_{k} + G.
\end{align*}

Proof of (iii)-(iv). We derive~\eqref{eq:z} from~\eqref{ortho}. For any $j=1,\ldots,N$, we have
\begin{equation*}
0=\frac \ud{\ud t}\langle\e,\partial_{x_j} Q_1 \rangle=\langle\partial_t \e,\partial_{x_j} Q_1 \rangle+\langle\e,\partial_{t} (\partial_{x_j} Q_1 )\rangle .
\end{equation*}
Thus~\eqref{syst_e} gives
\begin{align*}
\langle\eta, \partial_{x_j}Q_1\rangle+\langle \md_{\e}, \partial_{x_j}Q_1\rangle
-\langle\e, \dot{z_1}\cdot\nabla\partial_{x_j} Q_1\rangle=0.
\end{align*}
The first term is zero due to the orthogonality~\eqref{ortho}. Hence,
\begin{align*}
|\dot{z}_{1,j}-\ell_{1,j}|\|\partial_{x_j}Q\|_{L^2}^2\lesssim |\dot{z}_2-\ell_2|\int|\nabla Q_2(x)||\nabla Q_1(x)|\ud x
+|\dot{z}_1|\,\|\e\|_{L^2}.
\end{align*}
Thus, also using~\eqref{tech1} with $m=1$ and $m'=\frac 12$, we obtain
\begin{equation*}
|\dot{z}_1-\ell_1|\lesssim |\dot{z}_2-\ell_2|e^{-\frac 12 |z|}+|\dot{z}_1-\ell_1|\,\|\vve \|_\ENE+|\ell_1|\,\|\vve \|_\ENE.
\end{equation*}
Since $\|\vve \|_\ENE\lesssim \gamma$, this yields
\begin{equation*}
|\dot{z}_1-\ell_1|\lesssim |\dot{z}_2-\ell_2| e^{-\frac 12 |z|} +|\ell_1|\,\|\vve \|_\ENE.
\end{equation*}
Similarly, it holds
\begin{equation*}
|\dot{z}_2-\ell_2|\lesssim |\dot{z}_1-\ell_1|e^{-\frac 12 |z|} +|\ell_2|\,\|\vve \|_\ENE,
\end{equation*}
and thus, for large $|z|$,
\begin{equation*}
\sum_{k=1,2}|\dot{z}_{k}-{\ell}_{k}|\lesssim \left(|{\ell}_1 |+|{\ell}_2 |\right)\|\vve \|_\ENE,
\end{equation*}
which implies~\eqref{eq:z}.

Next, we derive~\eqref{eq:l}-\eqref{eq:lbis}. From~\eqref{ortho}, it holds
\begin{equation*}
0=\frac \ud{\ud t}\langle{\eta},\partial_{x_j} Q_1 \rangle
=\langle\partial_t \eta,\partial_{x_j} Q_1 \rangle
+\langle{\eta},\partial_{t} (\partial_{x_j} Q_1 )\rangle.
\end{equation*}
Thus, by~\eqref{ortho} and~\eqref{syst_e}, we have
\begin{align*}
0&=\langle\Delta\e-\e+f'(Q_1 )\e,\partial_{x_j} Q_1 \rangle
+\langle f(R+\e)-f(R)-f'(R)\e,\partial_{x_j} Q_1 \rangle\\
&\quad +\langle (f'(R)-f'(Q_1 ))\e, \partial_{x_j} Q_1 \rangle
\\
&\quad +\langle\md_{\eta}, \partial_{x_j} Q_1 \rangle+\langle G,\partial_{x_j} Q_1 \rangle
-\langle\eta, (\dot{z}_1 \cdot\nabla )\partial_{x_j} Q_1 \rangle.
\end{align*}
Since $\partial_{x_j} Q_1$ satisfies $\Delta\partial_{x_j} Q_1-\partial_{x_j} Q_1+f'(Q_1 )\partial_{x_j} Q_1=0$, the first term is zero. Next, by Taylor expansion (as $f$ is $\mathcal C^2$), we have
\[  f(R+\e)-f(R)-f'(R)\e = \e^2 \int_0^1 (1-\theta) f''(R+\theta \e) \ud \theta, \]
and by the $H^1$ sub-criticality of the exponent $p>2$, we infer
\begin{equation}\label{untard}
\left|\langle f(R+\e)-f(R)-f'(R)\e,\partial_{x_j} Q_1 \rangle\right|\lesssim \|\e\|_{H^1}^{2}.
\end{equation}
Then, again by Taylor expansion and $p> 2$,
\begin{equation*}
|f'(R)-f'(Q_1 )| |\partial_{x_j} Q_1 |
\lesssim  |Q_2| |Q_1|^{p-1} + |Q_1||Q_2|^{p-1} .
\end{equation*}
Thus, using also the Cauchy-Schwarz inequality and~\eqref{tech2},
\begin{equation}\label{deuxtard}
|\langle (f'(R)-f'(Q_1 ))\e, \partial_{x_j} Q_1 \rangle|
\lesssim \|\e\|_{L^2}^2+[q(|z|)]^2.
\end{equation}
Direct computations show that
\begin{align*}
\langle\md_{\eta}, \partial_{x_j} Q_1 \rangle&=
\big(\dot{\ell}_{1,j}+2\alpha\ell_{1,j}\big)\|\partial_{x_j}Q_1\|_{L^2}^2
+\langle(\dot{\ell}_2+2\alpha\ell_2)\cdot \nabla Q_2, \partial_{x_j}Q_1\rangle\\
&\quad-\sum_{k=1,2}\langle(\ell_k\cdot\nabla)(\dot{z}_k\cdot\nabla)Q_k, \partial_{x_j}Q_1\rangle.
\end{align*}
Thus, using Lemma~\ref{le:nonlin} and \eqref{eq:Q_sym}, for any $m\in (0,1)$,
\begin{align*}
\langle\md_{\eta}, \partial_{x_j} Q_1 \rangle
&= \big(\dot{\ell}_{1,j}+2\alpha\ell_{1,j}\big)\|\partial_{x_j}Q_1\|_{L^2}^2
\\&\quad
+O\left(\big|\dot{\ell}_2+2\alpha\ell_2\big|e^{-m |z|}\right)
+O\left(|\ell_2||\dot{z}_2| e^{-m |z|}\right).
\end{align*}
Using also~\eqref{eq:z}, it follows that
\begin{align*}
\langle\md_{\eta}, \partial_{x_j} Q_1 \rangle
&=\left(\dot{\ell}_{1,j}+2\alpha\ell_{1,j}\right)\|\partial_{x_j}Q_1\|_{L^2}^2
\\&\quad +O\left(\big|\dot{\ell}_2+2\alpha\ell_2\big| e^{-m |z|}\right)
+O\bigg(\sum_{k=1,2}|\ell_k|^2\bigg)+O\big(\|\vve \|_\ENE^2\big).
\end{align*}
By~\eqref{on:G} and the definition of $c_1$ in \eqref{on:g0}, we have, for any
$1<\theta<\min(p-1,2)$,
\begin{equation*}
 \left|\frac{\langle G,\partial_{x_j} Q_1 \rangle}{\|\partial_{x_1} Q\|_{L^2}^{2}} - \sigma\frac{z_j}{|z|}g(|z|)
 \right|\lesssim e^{-\theta |z|}.
\end{equation*}
Finally, by~\eqref{eq:z} again, we have
\begin{align*}
 |\langle\eta, (\dot{z}_1 \cdot\nabla )\partial_{x_j} Q_1 \rangle |
\lesssim (|\dot{z}_1-\ell_1|+|\ell_1|)\|\vve\|_\ENE
\lesssim \|\vve \|_\ENE^2+\sum_{k=1,2}|\ell_k|^2.
\end{align*}
Combining the above estimates, we have obtained
\begin{equation*}
\Big|\big(\dot\ell_1+2\alpha\ell_1\big)+\sigma\frac{z}{|z|}g(|z|)\Big|
\lesssim
\big|\dot{\ell}_2+2\alpha\ell_2\big|e^{-m|z|} +\sum_{k=1,2}|\ell_k|^2+\|\vve\|_\ENE^2+ e^{-\theta |z|} .
\end{equation*}
Similarly, from $(\eta, \partial_{x_j}Q_2)=0$, we check
\begin{equation*}
\Big|\big(\dot\ell_2+2\alpha\ell_2\big)- \sigma\frac{z}{|z|}g(|z|)\Big|
 \lesssim
\big|\dot{\ell}_1+2\alpha\ell_1\big|e^{-m |z|}  +\sum_{k=1,2}|\ell_k|^2+\|\vve\|_\ENE^2+ e^{-\theta |z|}.
\end{equation*}
These estimates imply~\eqref{eq:lbis};~\eqref{eq:l} follows readily using~\eqref{on:gg}.

Proof of (v). By~\eqref{syst_e}, we have
\begin{align*}
\frac \ud{\ud t}a_1 ^{\pm} & = \langle\partial_{t}\vve,\vec{Z}_1 ^{\pm}\rangle
+\langle\vve,\partial_{t}\vec{Z}_1 ^{\pm}\rangle\\
&=(\zeta^\pm-2\alpha) \langle \eta , Y_1\rangle+\langle \Delta \e-\e+f'(Q_1) \e,Y_1\rangle
 \\
&\quad + \langle f(R+\e)-f(R)-f'(R)\e, Y_1\rangle
+ \langle (f'(R)-f'(Q_1))\e, Y_1\rangle\\
&\quad +\langle G,Y_1\rangle+\zeta^\pm \langle \md_\e,Y_1\rangle
+\langle \md_\eta,Y_1\rangle-\langle\vve, \dot{z}_1 \cdot \nabla\vec{Z}_1 ^{\pm}\rangle.
\end{align*}
Using $\zeta^\pm-2\alpha=\nu^\pm$
and~\eqref{def:a}, $\LL Y=-\nu_0^2 Y$ and $\nu_0^2 = \nu^\pm \zeta^\pm$,
we observe that
\begin{equation*}
(\zeta^\pm-2\alpha) \langle \eta , Y_1\rangle+\langle \Delta \e-\e+f'(Q_1) \e,Y_1\rangle
= \nu^\pm a_1^\pm.
\end{equation*}
Using the decay properties of $Y$ in Lemma~\ref{le:L} and proceeding as before for~\eqref{untard} and~\eqref{deuxtard},
\begin{equation*}
|\langle f(R+\e)-f(R)-f'(R)\e, Y_1\rangle|+|\langle (f'(R)-f'(Q_1))\e, Y_1\rangle|
\lesssim \| \e\|_{L^2}^2 + e^{-\frac 32 |z|}.
\end{equation*}
Next, by \eqref{tech2}, $|\langle G,Y_1\rangle|\lesssim q(|z|)$.
Last, by~\eqref{eq:z} and~\eqref{eq:l},
\begin{equation*}
|\langle \md_\e,Y_1\rangle|+|\langle \md_\eta,Y_1\rangle|
+|\langle\vve, \dot{z}_1 \cdot \nabla\vec{Z}_1 ^{\pm}\rangle|
\lesssim \|\vve\|_\ENE^2 + \sum_{k=1,2} |\ell_k|^2 + q(|z|).
\end{equation*}
Gathering these estimates, and proceeding similarly for $a_2^\pm$,~\eqref{eq:a} is proved.
\end{proof}

\subsection{Energy estimates}
For $\mu>0$ small to be chosen, we denote $\rho=2\alpha-\mu$. Consider the nonlinear energy functional
\begin{equation} \label{def:E}
\mathcal{E} =
\int \big\{ |\nabla\e|^2+ (1-\rho\mu )\e^{2}
+(\eta+\mu\e)^2- 2 [F (R +\e )-F (R )-f (R )\e]\big\}.
\end{equation}

\begin{lemma}\label{le:ener} There exists $\mu>0$ such that
in the context of Lemma~\ref{le:dec},
 the following hold.
\begin{enumerate}
\item\emph{Coercivity and bound.}
\begin{equation}\label{eq:coer}
\mu \|\vve \|_\ENE^{2}-\frac{1}{2\mu}\sum_{k=1,2}\left( (a_{k}^{+})^{2}+(a_{k}^{-})^{2}\right)
\leq \mathcal{E}\leq \frac 1\mu \|\vve \|_\ENE^{2}.
\end{equation}
\item\emph{Time variation.}
\begin{equation}\label{eq:E}
\frac \ud{\ud t}\mathcal{E}\le -2\mu\mathcal{E}
+\frac{1}{\mu}\|\vve \|_\ENE \bigg[\|\vve \|_\ENE^{2}+\sum_{k=1,2}|\ell_{k}|^{2}+q(|z|)\bigg].
\end{equation}
\end{enumerate}
\end{lemma}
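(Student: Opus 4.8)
\emph{Proof of \eqref{eq:coer}.}
Throughout, $\mu$ will be fixed at the end, smaller than a finite list of absolute constants (in particular $\mu<\alpha$). The upper bound is immediate: $|\nabla\e|^2$, $(1-\rho\mu)\e^2$ and $(\eta+\mu\e)^2$ are each $\lesssim\|\vve\|_\ENE^2$, while $|F(R+\e)-F(R)-f(R)\e|\lesssim\e^2+|\e|^{p+1}$ by Taylor, so the last term of $\mathcal{E}$ is $\lesssim\|\e\|_{H^1}^2$ by Sobolev embedding and $p+1<\tfrac{2N}{N-2}$. For the lower bound, I first use Taylor together with $|f'(a)-f'(b)|\lesssim|a-b|(|a|+|b|)^{p-2}$ (valid since $p>2$) and Sobolev to write $-2\int[F(R+\e)-F(R)-f(R)\e]=-\int f'(R)\e^2+O(\|\e\|_{H^1}^3)$. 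Then I localise: pick smooth $\chi_1,\chi_2\ge0$ with $\chi_1^2+\chi_2^2\equiv1$, $\chi_k\equiv1$ near $z_k$, and $\nabla\chi_k$ supported in a neighbourhood of the bisector hyperplane of $z_1,z_2$, hence at distance $\gtrsim|z|$ from both centres, where $Q(\cdot-z_k)$ and $Y_k$ are $O(e^{-c|z|})$ and $|\nabla\chi_k|\lesssim|z|^{-1}$. Using $\chi_1^2+\chi_2^2\equiv1$ for the gradient identity, and $f'(R)=p\,Q(\cdot-z_k)^{p-1}+O(e^{-c|z|})$ on $\mathrm{supp}\,\chi_k$, one gets
\[ \int\{|\nabla\e|^2+\e^2-f'(R)\e^2\}=\sum_{k=1,2}\big\langle\big(-\Delta+1-pQ(\cdot-z_k)^{p-1}\big)(\chi_k\e),\chi_k\e\big\rangle+O\big((|z|^{-2}+e^{-c|z|})\|\e\|_{L^2}^2\big). \]
Applying Lemma~\ref{le:L}(ii) to each $\chi_k\e$ (translated by $z_k$), and using $\langle\chi_k\e,Y_k\rangle=\langle\e,Y_k\rangle+O(e^{-c|z|}\|\e\|_{L^2})$ and $\langle\chi_k\e,\partial_{x_j}Q_k\rangle=O(e^{-c|z|}\|\e\|_{L^2})$ by the orthogonality \eqref{ortho}, this is $\ge c\|\e\|_{H^1}^2-C\sum_k\langle\e,Y_k\rangle^2-C(|z|^{-2}+e^{-c|z|})\|\e\|_{L^2}^2$. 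Combining with $(\eta+\mu\e)^2\ge(1-\mu)\eta^2-\mu\e^2$ and absorbing the $O(\|\e\|_{H^1}^3)$, the $\mu$-small and the $|z|$-small terms (legitimate for $\mu,\gamma$ small), I reach $\mathcal{E}\ge c'\|\vve\|_\ENE^2-C\sum_k\langle\e,Y_k\rangle^2$. Finally $a_k^\pm=\zeta^\pm\langle\e,Y_k\rangle+\langle\eta,Y_k\rangle$ with $\zeta^+\neq\zeta^-$, so $\langle\e,Y_k\rangle^2\lesssim(a_k^+)^2+(a_k^-)^2$; choosing $\mu$ small enough gives \eqref{eq:coer}.

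\emph{Proof of \eqref{eq:E}.}
Differentiate $\mathcal{E}$, insert \eqref{syst_e}, and integrate by parts. The term $-2\int\eta\Delta\e$ produced by $\partial_t\int|\nabla\e|^2$ cancels the $+2\int\eta\Delta\e$ coming from $2\int(\eta+\mu\e)\partial_t\eta$, and the two copies of $\pm2\int\eta[f(R+\e)-f(R)]$ cancel as well. Using $\rho=2\alpha-\mu$, the purely quadratic-in-$\vve$ remainder collapses to
\[ -2\mu\,\mathcal{E}-(4\alpha-4\mu)\int(\eta+\mu\e)^2, \]
up to the genuinely cubic term $2\mu\int\e^2\int_0^1(2s-1)\big(f'(R+s\e)-f'(R)\big)\ud s$, whose leading part vanishes because $\int_0^1(2s-1)\,\ud s=0$ and which is therefore $\lesssim\mu(\|\e\|_{L^3}^3+\|\e\|_{L^{p+1}}^{p+1})\lesssim\mu\|\vve\|_\ENE^3$ (Sobolev, $p<\tfrac{2N}{N-2}$). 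The contribution of $\partial_t R$ inside the nonlinear term, namely $-2\int[f(R+\e)-f(R)-f'(R)\e]\,\partial_tR$, is bounded by $\|\e\|_{H^1}^2\sum_k|\dot z_k|\lesssim\|\vve\|_\ENE(\|\vve\|_\ENE^2+\sum_k|\ell_k|^2)$ by \eqref{eq:z}. All remaining terms carry a factor $\md_\e$, $\md_\eta$ or $G$; estimating each by pairing against these (smooth, spatially localised) quantities and using \eqref{eq:z}, \eqref{eq:l}, \eqref{tech2} — so that $\|\md_\e\|_{H^1}\lesssim\|\vve\|_\ENE^2+\sum_k|\ell_k|^2$, $\|\md_\eta\|_{L^2}+\|G\|_{L^2}\lesssim\|\vve\|_\ENE^2+\sum_k|\ell_k|^2+q(|z|)$, and $|\langle f(R+\e)-f(R),\md_\e\rangle|\lesssim\|\e\|_{H^1}\|\md_\e\|_{H^1}$ after Hölder — shows they are $\lesssim\|\vve\|_\ENE(\|\vve\|_\ENE^2+\sum_k|\ell_k|^2+q(|z|))$. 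Summing and discarding the nonpositive term $-(4\alpha-4\mu)\int(\eta+\mu\e)^2$ (here $\mu<\alpha$) yields \eqref{eq:E}.

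\emph{Main difficulty.}
The computational heart is the identity for the quadratic part of $\tfrac{\ud}{\ud t}\mathcal{E}$: it is precisely because $\rho=2\alpha-\mu$ and because $\mathcal{E}$ carries the cross term $(\eta+\mu\e)^2$ that everything reorganises into $-2\mu\mathcal{E}$ plus the manifestly dissipative square $-(4\alpha-4\mu)\int(\eta+\mu\e)^2$, with no uncontrolled $O(\mu)\|\vve\|_\ENE^2$ surviving. The other somewhat delicate point is the localised coercivity for \eqref{eq:coer}, that is, checking that the cut-off commutators and the mismatch between $f'(R)$ and $\sum_k pQ(\cdot-z_k)^{p-1}$ are genuinely negligible for $|z|$ large; everything else is routine once Lemmas~\ref{le:nonlin} and \ref{le:dec} are available.
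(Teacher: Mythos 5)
Your proof is correct and follows essentially the same strategy as the paper: localised coercivity with a partition of unity for part (i) and the algebraic identity for $\tfrac{\ud}{\ud t}\mathcal{E}$ in terms of $-2\mu\mathcal{E}$ and a nonpositive damping square for part (ii). The only cosmetic difference is the choice of cutoff (a half-space/bisector partition of unity versus the paper's ball cutoff $\chi_1(x)=\chi(|x-z_1|/\lambda)$, $\chi_2=(1-\chi_1^2)^{1/2}$ with $\lambda=|z|/4$), and the fact that the paper also uses the orthogonality \eqref{ortho} to drop the $(\dot\ell_k+2\alpha\ell_k)\cdot\nabla Q_k$ part of $\md_\eta$ outright, whereas you control it via \eqref{eq:l} and absorb the resulting $q(|z|)$ into the already present $q(|z|)$ on the right of \eqref{eq:E} — both routes reach the same bound.
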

\begin{remark}
The above lemma is valid for any small enough $\mu>0$. For future needs, we assume further
\begin{equation}\label{on:mu}
\mu \leq \min\left(1,\alpha,  |\nu_-|\right).
\end{equation}
One checks that a usual linearized energy, corresponding to $\mu=0$ in the definition of $\mathcal{E}$, would only gives damping for the component $\eta$. This is the reason why we introduce the modified energy $\mathcal{E}$.
For the simplicity of notation, the same small constant $\mu>0$ is used in~\eqref{eq:coer} and~\eqref{eq:E} though in the former estimate the small constant is related to the coercivity constant $c$ of Lemma~\ref{le:L}, while in the latter it is related to the damping $\alpha$.
\end{remark}

\begin{proof} Proof of (i).
The upper bound on $\mathcal{E}$ in \eqref{eq:coer} easily follows from the energy subcriticality of $p$.
The coercivity is proved for fixed time and so we omit the time dependency. By translation invariance, we
 assume without loss of generality that $z_1=-z_2=\frac z2$.
Let $\chi:\RR\to\RR$ be a smooth function satisfying the following properties
\begin{equation*}
\chi=1 \text{ on } \left[0,1\right], \quad \chi=0 \text{ on } [2,+\infty), \quad \chi'\leq 0 \text{ on } \RR.
\end{equation*}
For $\lambda=\frac {|z|}4\gg 1$, let
\begin{equation*}
\chi_1(x)=\chi\left(\frac {\left|x-z_{1}\right|}{\lambda}\right),\quad
\chi_2(x)=(1-\chi_1^2(x))^\frac 12.
\end{equation*}
We define $\e_k=\e(\cdot+z_k)\chi_k(\cdot+z_k)$ for $k=1,2$, so that
\begin{equation*}
\int |\nabla \e_k|^2=\int |\nabla \e|^2\chi_k^2-\int \e^2 (\Delta \chi_k)\chi_k,
\end{equation*}
and thus
\begin{equation}\label{eq:2s}
\int |\nabla \e|^2 = \int |\nabla \e_1|^2 +|\nabla \e_2|^2 + O(\lambda^{-2} \|\e\|_{L^2}^2).
\end{equation}

Next, using~\eqref{ortho},
\begin{equation*}
\langle \e_k,\partial_{x_j} Q\rangle
=\sigma_k \langle \e \chi_k,\partial_{x_j} Q_k\rangle
=\sigma_k \langle \e , (\chi_k-1)\partial_{x_j} Q_k\rangle
=O(e^{-\frac {|z|} 4} \|\e\|_{L^2} ),
\end{equation*}
and
\begin{equation*}
\langle \e_k , Y\rangle=\sigma_k \langle \e \chi_k, Y_k\rangle
=\sigma_k \langle \e , Y_k\rangle + O(e^{-\frac {|z|} 4} \|\e\|_{L^2} ).
\end{equation*}
Thus, applying (ii) of Lemma~\ref{le:L} to $\e_k$, one obtains
\begin{equation*}
\langle \LL \e_k, \e_k\rangle
\geq c \|\e_k\|_{H^1}^2 - C \langle \e , Y_k\rangle^2 - C (e^{-\frac {|z|} 4}+\lambda^{-2}) \|\e\|_{L^2}^2.
\end{equation*}
Using~\eqref{eq:2s}, we obtain for $z$ large
\begin{equation*}
\langle \LL \e, \e\rangle
\geq \frac c 2 \|\e\|_{H^1}^2 - C \sum_{k=1,2}\langle \e , Y_k\rangle^2 .
\end{equation*}
The estimate
\begin{equation*}
\int \big\{ |\nabla\e|^2+ \e^{2}
+\eta^2- f' (R) \e^2 \big\}\geq \mu \|\vve\|_{H^1\times L^2}^2
- C (a_k^+)^2 - C (a_k^-)^2
\end{equation*}
then follows from
\begin{equation*}
\langle \e,Y_k\rangle = \frac {\langle\vve,\vec{Z}_k^+ - \vec{Z}_k^-\rangle }{\zeta^+-\zeta^-}
 = \frac {a_k^+ - a_k^-}{\zeta^+-\zeta^-}.
\end{equation*}
Using
\begin{equation*}
|F (R +\e )-F (R )-f (R )\e - f'(R) \e|\lesssim |\e|^3+|\e|^{p+1},
\end{equation*}
and $\int |\e|^{3}+ |\e|^{p+1}\lesssim\|\e\|_{H^{1}}^{3}+ \|\e\|_{H^1}^{p+1}$ by energy subcriticality of $p$,
for $\gamma$ and $\mu$ small, we have proved \eqref{eq:coer}.

Proof of (ii).
From direct computations and integration by parts, we have
\begin{align*}
\frac 12 \frac \ud{\ud t} \mathcal{E}
& =\int \partial_t \e \left[- \Delta \e
+ (1-\rho\mu ) \e -f(R+\e)+f(R)\right]
+ (\partial_t \eta+\mu\partial_t \e) (\eta+\mu\e )\\
&\quad +\int \sum_{k=1,2} (\dot{z}_{k}\cdot\nabla Q_{k})
\left[f(R+\e)-f(R)-f'(R)\e\right]=\mathbf{g_1 }+\mathbf{g_2 }.
\end{align*}
Using~\eqref{syst_e}, integration by parts and $2\alpha=\rho+\mu$, we compute
\begin{align*}
\mathbf{g_1 }&=
-\mu\int\big\{|\nabla \e|^2+(1-\rho\mu)\e^2 -\e [f(R+\e)-f(R)]\big\}
-\rho \int (\eta+\mu\e )^2\\
&\quad+\int {\rm{Mod}_{\e}} \big\{-\Delta{\e}+ (1-\rho\mu ){\e}- \big[f(R+\e)-f(R)\big]\big\}\\
&\quad+\int (\eta+\mu\e ) [\md_{\eta}+\mu\md_{\e}] +\int (\eta+\mu\e ) G
\\&=\mathbf{g_{1,1}}+\mathbf{g_{1,2}}+\mathbf{g_{1,3}}+\mathbf{g_{1,4}}.
\end{align*}
Note that by $0<\mu<\alpha$, one has $\rho-\mu = 2(\alpha-\mu)>0$ and so
\begin{align*}
\mathbf{g_{1,1}}&= - \mu \mathcal{E}- 2\mu\int \big[F(R+\e)-F(R)-f(R)\e-\tfrac 12f'(R)\e^{2}\big]\\
&\quad +\mu\int \e [f(R+\e)-f(R)-f'(R)\e ]-(\rho-\mu)\int (\eta+\mu\e)^{2}
\le -\mu \mathcal{E}+C \|\e\|_{H^1}^{3} ,
\end{align*}
where we have estimated, using $p>2$, H\"older inequality, the sub-criticality of $p$ and Sobolev embedding,
\begin{align*}
&\int \left|F(R+\e)-F(R)-f(R)\e- \tfrac 12 f'(R)\e^{2}\right|\\
&\quad +\int |\e [f(R+\e)-f(R)-f'(R)\e]|
\lesssim \int |\e|^3 |R|^{p-2} + |\e|^{p+1}\lesssim \|\e\|_{H^1}^{3}.
\end{align*}
Using the Cauchy-Schwarz inequality and~\eqref{eq:z}-\eqref{eq:l}, we also derive the following estimates
\begin{equation*}
|\mathbf{g_{1,2}} | \lesssim
\left( |\dot z_1-\ell_1|+|\dot z_2-\ell_2|\right) \|\e \|_{H^1}
\lesssim  \|\vve \|_\ENE \bigg[\|\vve \|_\ENE^{2}+\sum_{k=1,2}|\ell_{k}|^{2}\bigg],
\end{equation*}
and (also using the orthogonality conditions~\eqref{ortho})
\begin{align*}
|\mathbf{g_{1,3}} | &
=\bigg|\int (\eta+\mu \e) \big(\sum_{k=1,2} (\ell_k\cdot\nabla)(\dot z_k\cdot\nabla)Q_k\big)\bigg|\\
& \lesssim (|\ell_1|+|\ell_2|)(|\dot z_1-\ell_1|+|\dot z_2-\ell_2|+|\ell_1|+|\ell_2|)(\|\e \|_{L^2}+\|\eta\|_{L^2})\\
&\lesssim   \|\vve \|_\ENE \bigg[\|\vve \|_\ENE^{2}+\sum_{k=1,2}|\ell_{k}|^{2}\bigg] .
\end{align*}
Next, from~\eqref{tech2} and the Cauchy-Schwarz inequality,
\begin{equation*}
|\mathbf{g_{1,4}}|\lesssim \|G\|_{L^2} (\|\e \|_{L^2}+\|\eta\|_{L^2})
\lesssim q(|z|) \|\vve \|_\ENE.
\end{equation*}
Last, by~\eqref{eq:z}, proceeding as before, we see that
\begin{align*}
 |\mathbf{g_2 } |
 &\lesssim (|\dot z_1|+|\dot z_2|) \| \e \|_{H^1}^2
 \lesssim (|\dot z_1-\ell_1|+|\dot z_2-\ell_2|+|\ell_1|+|\ell_2|) \|\vve \|_\ENE^2\\
 &\lesssim \|\vve \|_\ENE^2 \bigg[\|\vve \|_\ENE^{2}+\sum_{k=1,2}|\ell_{k}|\bigg].
\end{align*}
Gathering the above estimates, \eqref{eq:E} is proved, taking $\mu$ small enough.
\end{proof}

\subsection{Trichotomy of evolution}\label{S:2.4}
Estimates~\eqref{eq:z},~\eqref{eq:l},~\eqref{eq:lbis},~\eqref{eq:a} and~\eqref{eq:E} give basic information on the evolution of the various components of a solution in the framework of the decomposition introduced in Lemma~\ref{le:dec}.

We introduce notation related to modified parameters that allow us to justify the following trichotomy in the evolution of the solution.
\begin{description}
\item[ODE behavior for $|z|$] The distance $z=z_1-z_2$ formally satisfies
\begin{equation*}
\frac \ud{\ud t}\bigg[\frac 1{q(|z|)}\bigg]= -\frac{\sigma g_0}{\alpha}.
\end{equation*}
Note that  for $\sigma=-1$, $\log t$ is an approximate solution of this ODE.
This justifies the rigidity results in Theorem~\ref{th:2}.
In contrast, when $\sigma=1$, there are no solution such that $|z|\to \infty$, which explains the non-existence result in Theorem~\ref{th:1}.
\item[Exponential growth] The parameters $a^+_k$ are related to the forward exponential instability of the solitary wave. They will require a specific approach, involving backward in time arguments. The existence of exactly one direction of instability for each solitary wave justifies Theorem~\ref{th:3}.
\item[Damped evolution] The parameters $\ell_k$, $a^-_k$ and the remainder $\vve$ (without its unstable components $a_k^+$) enjoy exponential
damping;  they will be easily estimated provided that the other parameters are locked, see Proposition~\ref{le:BS}.
\end{description}

First, we set
\begin{equation}
y = z + \frac{\ell}{2\alpha},\quad r = |y|.
\end{equation}
Second, we define
\begin{equation}
b=\sum_{k=1,2} (a_k^+)^2.
\end{equation}
Third, we introduce notation for the damped components:
\begin{equation}
\mathcal F = \mathcal E + \BB,\quad \BB= \sum_{k=1,2} |\ell_k|^2 + \frac 1{2 \mu} \sum_{k=1,2} (a_k^-)^2 ,
\end{equation}
and for all the components of the solution
\begin{equation}
\mathcal N = \bigg[ \|\vve\|_\ENE^2 + \sum_{k=1,2} |\ell_k|^2 \bigg]^{\frac 12}.
\end{equation}
Last, we define
\begin{equation}
\mathcal M= \frac 1{\mu^2}\left(\mathcal F - \frac{b}{2\nu^+}\right).
\end{equation}

In the following lemma, we rewrite the estimates of Lemmas~\ref{le:dec} and~\ref{le:ener}  in terms of these new parameters.

\begin{lemma}\label{le:new}
In the context of Lemma~\ref{le:dec}, the following hold.
\begin{enumerate}
\item \emph{Comparison with original variables.}
\begin{gather}
 \big| r-|z| \big|\leq |y-z| \lesssim \mathcal N,\quad
 |g(|z|)-g_0 q(r)|\lesssim q(r) (\mathcal N+r^{-1}), \label{eq:new1}\\
 \mu \mathcal N^2\leq 
 \mu \|\vve\|_\ENE^2 + \sum_{k=1,2} |\ell_k|^2 \leq \mathcal F + \frac b{2\mu}
\lesssim\mathcal N^2.\label{eq:new2}
\end{gather}
\item\emph{ODE behavior for the distance.} For some $K>0$,
\begin{equation}\label{eq:dist}
\bigg|\frac \ud{\ud t} \bigg[ \frac 1{q(r)}\bigg] + \frac{\sigma g_0}{\alpha} \bigg|
\leq \frac{K}{q(r)}\left( \mathcal N^2+r^{-1}q(r)\right).
\end{equation}
\item\emph{Exponential instability.}
\begin{equation}\label{eq:b}
 | \dot b - 2 \nu^+ b |\lesssim \mathcal N^3+q(r) \mathcal N .
\end{equation}
\item\emph{Damped components.}
\begin{equation}\label{eq:damped}
\frac \ud{\ud t}\mathcal F + 2 \mu \mathcal F \lesssim \mathcal N^3+q(r) \mathcal N ,\quad
\frac \ud{\ud t}\BB + 2 \mu \BB \lesssim \mathcal N^3+q(r) \mathcal N .
\end{equation}
\item\emph{Liapunov type functional.} 
\begin{equation} \label{eq:Mbis}
\frac \ud{\ud t} \mathcal M \leq -\mathcal N^2 + C [q(r)]^2.
\end{equation}
\item\emph{Refined estimates for the distance.}
Setting
\begin{equation*}
R^+=\frac1{q(r)} \exp\left( K\mathcal M \right) \quad \text{and}\quad
R^-=\frac1{q(r)} \exp\left(- K\mathcal M \right),
\end{equation*}
($K$ is given in \eqref{eq:dist}), it holds
\begin{align}
\frac{\ud}{\ud t} R^+ &\leq \left(-\frac{\sigma g_0}{\alpha}+2Kr^{-1}\right)\exp\left( K\mathcal M \right),\label{eq:Rp}\\
\frac{\ud}{\ud t} R^- &\geq \left(-\frac{\sigma g_0}{\alpha}-2Kr^{-1}\right)\exp\left( -K\mathcal M \right).\label{eq:Rm}
\end{align}
\end{enumerate}
\end{lemma}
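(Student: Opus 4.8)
The plan is to rewrite, one by one, the estimates of Lemmas~\ref{le:dec} and~\ref{le:ener} in the new variables; the only additional ingredient needed is the slow variation of $q$ from~\eqref{Qdec}, which (adding the two bounds there) gives $|q'(r)/q(r)+1|\lesssim r^{-1}$ and hence $q(|z|)/q(r)=1+O(\mathcal N)$ whenever $\big||z|-r\big|\lesssim\mathcal N$. For part~(i): since $y-z=\ell/(2\alpha)$ and $|\ell|\le|\ell_1|+|\ell_2|\lesssim\mathcal N$, the bounds $\big|r-|z|\big|\le|y-z|\lesssim\mathcal N$ are the triangle inequality, and the estimate for $|g(|z|)-g_0q(r)|$ combines $|g(|z|)-g_0q(|z|)|\lesssim|z|^{-1}q(|z|)$ from~\eqref{on:gg} with $|q(|z|)-q(r)|\lesssim q(r)\mathcal N$. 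The chain~\eqref{eq:new2} follows from $\mu\le1$, the definitions of $\mathcal F,\BB,b$, the coercivity and upper bound~\eqref{eq:coer}, and $|a_k^\pm|=|\langle\vve,\vec Z_k^\pm\rangle|\lesssim\|\vve\|_\ENE$.

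The heart of the matter is part~(ii). Differentiating $y=z+\ell/(2\alpha)$ and using~\eqref{eq:z} in the form $\dot z=\ell+O(\mathcal N^2)$ together with~\eqref{eq:lbis} for $k=1,2$, which combine (recall $\ell=\ell_1-\ell_2$) into $\dot\ell+2\alpha\ell=-2\sigma\tfrac{z}{|z|}g(|z|)+O(\mathcal N^2+e^{-\theta|z|})$, the $\ell$-terms cancel in $\dot y=\dot z+\dot\ell/(2\alpha)$ and one is left with $\dot y=-\tfrac\sigma\alpha\tfrac{z}{|z|}g(|z|)+O(\mathcal N^2+e^{-\theta|z|})$ --- this cancellation is precisely the reason for defining $y$ with the correction $\ell/(2\alpha)$. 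Since $\tfrac y{|y|}\cdot\tfrac z{|z|}=1+O(\mathcal N^2/|z|^2)$ and $g(|z|)\lesssim q(|z|)\le1$, we get $\dot r=\tfrac y{|y|}\cdot\dot y=-\tfrac\sigma\alpha g(|z|)+O(\mathcal N^2+e^{-\theta|z|})$; replacing $g(|z|)$ by $g_0q(r)$ via part~(i) and noting $e^{-\theta|z|}\lesssim r^{-1}q(r)$ (valid since $\theta>1$), this reads $\dot r=-\tfrac{\sigma g_0}\alpha q(r)+O\big(q(r)(\mathcal N+r^{-1})+\mathcal N^2\big)$. Finally $\tfrac{\ud}{\ud t}[1/q(r)]=-q'(r)\dot r/q(r)^2=(1+O(r^{-1}))\dot r/q(r)$ yields $\big|\tfrac{\ud}{\ud t}[1/q(r)]+\tfrac{\sigma g_0}\alpha\big|\lesssim r^{-1}+\mathcal N+\mathcal N^2/q(r)$, and $\mathcal N\lesssim r^{-1}+\mathcal N^2/q(r)$ because $q$ decays exponentially (if $\mathcal N\le q(r)$ then $\mathcal N\le q(r)\lesssim r^{-1}$; otherwise $\mathcal N^2/q(r)\ge\mathcal N$), which is~\eqref{eq:dist}.

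Parts~(iii)--(vi) are then routine bookkeeping. Estimate~\eqref{eq:b} is the product rule $\dot b=2\sum a_k^+\dot a_k^+$ with~\eqref{eq:a} and $|a_k^+|\lesssim\mathcal N$. For~\eqref{eq:damped}, $\tfrac{\ud}{\ud t}|\ell_k|^2=2\ell_k\cdot\dot\ell_k\le-4\alpha|\ell_k|^2+O(\mathcal N(\mathcal N^2+q(r)))$ by~\eqref{eq:l} with $4\alpha\ge2\mu$, and $\tfrac{\ud}{\ud t}(a_k^-)^2=2a_k^-\dot a_k^-\le2\nu^-(a_k^-)^2+O(\mathcal N(\mathcal N^2+q(r)))$ by~\eqref{eq:a} with $2\nu^-\le-2\mu$ from~\eqref{on:mu}; adding these yields the $\BB$-bound, and adding~\eqref{eq:E} yields the $\mathcal F$-bound, using $q(|z|)\lesssim q(r)$ throughout. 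For~\eqref{eq:Mbis}, write $\tfrac{\ud}{\ud t}\mathcal M=\tfrac1{\mu^2}\big(\tfrac{\ud}{\ud t}\mathcal F-\tfrac1{2\nu^+}\dot b\big)$; combining~\eqref{eq:damped} with $\dot b\ge2\nu^+b-C(\mathcal N^3+q(r)\mathcal N)$ from~\eqref{eq:b} (and $\nu^+>0$) gives $\tfrac{\ud}{\ud t}\mathcal M\le\tfrac1{\mu^2}(-2\mu\mathcal F-b)+C(\mathcal N^3+q(r)\mathcal N)$, and $\mathcal F\ge\mu\mathcal N^2-b/(2\mu)$ from~\eqref{eq:new2} turns $-2\mu\mathcal F-b$ into $\le-2\mu^2\mathcal N^2$; absorbing $C\mathcal N^3\le\tfrac12\mathcal N^2$ (for $\gamma$ small) and Young's inequality on $C\mathcal Nq(r)$ give~\eqref{eq:Mbis}. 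For~(vi), from $\tfrac{\ud}{\ud t}R^\pm=\exp(\pm K\mathcal M)\big[\tfrac{\ud}{\ud t}(1/q(r))\pm\tfrac K{q(r)}\dot{\mathcal M}\big]$ one inserts~\eqref{eq:dist} and~\eqref{eq:Mbis}: the terms $\pm K\mathcal N^2/q(r)$ cancel exactly, leaving $\tfrac{\ud}{\ud t}R^+\le\exp(K\mathcal M)\big(-\tfrac{\sigma g_0}\alpha+Kr^{-1}+KCq(r)\big)$ and likewise for $R^-$ with reversed signs, and $q(r)=o(r^{-1})$ absorbs the last term into $Kr^{-1}$, which is~\eqref{eq:Rp}--\eqref{eq:Rm}.

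The delicate step is part~(ii): one must use the definition of $y$ so that the $\ell$-contributions cancel in $\dot y$ --- otherwise an $O(\mathcal N)$ error survives and, divided by $q(r)$, ruins~\eqref{eq:dist} --- and then check that the residual $O(\mathcal N)$ coming from $g(|z|)-g_0q(r)$ is dominated by $r^{-1}+\mathcal N^2/q(r)$, which crucially uses the exponential decay of $q$. Everything else is careful tracking of the sign constraints in~\eqref{on:mu} and routine Young inequalities; the only mildly clever point elsewhere is that $\mathcal M$ is built so that the $K\mathcal N^2/q(r)$ terms cancel in step~(vi).
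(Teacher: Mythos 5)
Your proposal is correct and follows essentially the same route as the paper's proof: the cancellation of the $\ell$-terms in $\dot y$ is exactly the paper's computation \eqref{for:y}, and your treatment of parts (iii)--(vi) mirrors the paper's, including the observation that the $\pm K\mathcal N^2/q(r)$ contributions cancel in part (vi). The one small cosmetic difference is in part (ii): after replacing $g(|z|)$ by $g_0 q(r)$ you carry an extra $O(\mathcal N)$ term in the error and dispose of it by a two-case argument ($\mathcal N\le q(r)$ vs. $\mathcal N>q(r)$), whereas the paper absorbs $q(r)\mathcal N\lesssim \mathcal N^2+q(r)^2\lesssim \mathcal N^2+r^{-1}q(r)$ before dividing by $q(r)$; both are valid and arrive at \eqref{eq:dist}.
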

\begin{proof}
Proof of~\eqref{eq:new1}. It follows
from the triangle inequality that
\begin{equation*}
\big|r-|z| \big| \leq |y-z|\leq \frac{|\ell|}{2\alpha }\lesssim \mathcal N.
\end{equation*}
The second part of~\eqref{eq:new1} then follows from~\eqref{on:gg}.

Proof of~\eqref{eq:new2}. It follows readily from~\eqref{eq:coer}.

Proof of~\eqref{eq:dist}. First, from~\eqref{eq:z},~\eqref{eq:lbis} and~\eqref{eq:new1}, we note ($\theta$ is as in \eqref{eq:lbis})
\begin{equation}\label{for:y}\begin{aligned}
\dot y = \dot z + \frac{\dot \ell}{2\alpha}
&=-\frac\sigma{\alpha} \frac{z}{|z|} g(|z|)+O(\mathcal N^2+e^{-\theta |z|})\\
&=-\frac{\sigma g_0}{\alpha} \frac{y}{r} q(r)+O(\mathcal N^2+r^{-1}q(r)).
\end{aligned}\end{equation}
Hence,
\begin{equation*}
\dot r = \frac{\dot y\cdot y}{r}
=-\frac{\sigma g_0}{\alpha} q(r)+O(\mathcal N^2+r^{-1}q(r)).
\end{equation*}
Using also $|q'(r)+q(r)|\lesssim r^{-1} q(r)$ (from~\eqref{Qdec}), we find
\begin{equation*}
\frac \ud{\ud t} \bigg[ \frac 1{q(r)}\bigg]
=-\frac{\dot r q'(r)} {[q(r)]^2} = - \frac{\sigma g_0}{\alpha} + \frac 1{q(r)} O ( \mathcal N^2 +r^{-1}q(r) ).
\end{equation*}

Proof of~\eqref{eq:b}. It follows from~\eqref{eq:a} and $|a^+_k|\lesssim \|\vve\|_\ENE
\leq \mathcal N$.

Proof of~\eqref{eq:damped}. 
From the expression of $\mathcal F$ and then~\eqref{eq:E},~\eqref{eq:l} and~\eqref{eq:a}\begin{align*}
\frac \ud{\ud t} \mathcal F
&=\frac \ud{\ud t} \mathcal E + 2 \sum_{k=1,2} \dot \ell_k \cdot\ell_k +\frac 1\mu \sum_{k=1,2} \dot a_k^- a_k^- \\
& \leq -2\mu \mathcal E -4\alpha \sum_{k=1,2} |\ell_k|^2 + \frac{\nu^-}\mu \sum_{k=1,2} (a_k^-)^2
+ O(\mathcal N^3+q(r) \mathcal N).
\end{align*}
Since $0<\mu<\alpha$ and $0<\mu<|\nu^-|$ (see~\eqref{on:mu}) we obtain~\eqref{eq:damped} for $\mathcal F$.
The proof for $\BB$ is the same.

Proof of \eqref{eq:Mbis}.
First, it follows from combining~\eqref{eq:b} and~\eqref{eq:damped} that
\begin{equation*}
\mu^2 \frac \ud{\ud t} \mathcal M \leq -2\mu \left(\mathcal F + \frac b{2\mu}\right) + O(\mathcal N^3+q(r)\mathcal N),
\end{equation*}
Thus, from~\eqref{eq:new2},
we observe that
\begin{equation*}
\mathcal N^2
\leq \frac 1{\mu} \left(\mathcal F+\frac{b}{2\mu}\right)
\leq -\frac 1{2} \frac \ud{\ud t} \mathcal M + O(\mathcal N^{3}+q(r)\mathcal N).
\end{equation*}
and \eqref{eq:Mbis} follows for $\mathcal N$ small enough.

Proof of \eqref{eq:Rp}-\eqref{eq:Rm}.
It follows from direct computation and \eqref{eq:dist} and \eqref{eq:Mbis} that
(for $r$ large enough)
\begin{equation*}
\exp\left(-K\mathcal M\right) \frac{\ud}{\ud t} R^+
=\frac{\ud}{\ud t} \left[\frac 1{q(r)}\right] + \frac K{q(r)} \frac{\ud}{\ud t} \mathcal M
\leq -\frac {\sigma g_0}{\alpha} + 2K r^{-1}.
\end{equation*}
The computation for $\frac{d}{dt}R^-$ is similar.
\end{proof}

\subsection{Energy of a $2$-solitary wave}
We observe from the definition and the energy property~\eqref{energy} that
a $2$-solitary wave $u$ of~\eqref{nlkg} satisfies
\begin{equation}\label{suite0}
\lim_{t\to \infty} E({\vec{u}}(t)) = \int |\nabla Q|^2+Q^2-2F(Q) = 2E(Q,0).
\end{equation}
More precisely, we expand the energy for a solution close to a $2$-solitary wave.
\begin{lemma}
In the context of Lemma~\ref{le:dec}, the following holds
\begin{equation}\label{eq:E:R}
E(\vec u)=2E(Q,0)-\sigma c_1 g_0 q(r) +O(r^{-1}q(r))+O(q(r)\mathcal{N})+O(\mathcal{N}^2).
\end{equation}
\end{lemma}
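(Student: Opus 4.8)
The plan is to write $u=R+\e$ and $\partial_t u=\eta-(\ell_1\cdot\nabla)Q_1-(\ell_2\cdot\nabla)Q_2$, and to expand $E(\vec u)=\frac12\int\{|\nabla u|^2+u^2+(\partial_t u)^2-2F(u)\}$ first in the perturbation $\vve=(\e,\eta)$ and $(\ell_1,\ell_2)$, and then the main term in $Q_1,Q_2$. For the kinetic part, since $\|(\ell_k\cdot\nabla)Q_k\|_{L^2}\lesssim|\ell_k|$ and $|\ell_k|+\|\eta\|_{L^2}\lesssim\mathcal N$, one gets $\frac12\int(\partial_t u)^2=O(\mathcal N^2)$ directly. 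The potential part $\mathcal P:=\frac12\int\{|\nabla u|^2+u^2-2F(u)\}$ expands, using $F'=f$, as
\[
\mathcal P=\tfrac12\!\int\!\{|\nabla R|^2+R^2-2F(R)\}+\!\int\!\{\nabla R\cdot\nabla\e+R\e-f(R)\e\}+\tfrac12\!\int\!\{|\nabla\e|^2+\e^2\}-\!\int\!\{F(R+\e)-F(R)-f(R)\e\}.
\]
The third term is $\le\frac12\|\e\|_{H^1}^2\lesssim\mathcal N^2$, and the fourth is $O(\mathcal N^2)$ by the Taylor bound $|F(R+\e)-F(R)-f(R)\e|\lesssim|R|^{p-1}\e^2+|\e|^{p+1}$ together with $\|R\|_{L^\infty}\lesssim1$ and the energy-subcritical Sobolev embedding $H^1\hookrightarrow L^{p+1}$.

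For the linear term in $\e$, I would use the elliptic equation~\eqref{eq:elliptic} in the form $-\Delta Q_k+Q_k=f(Q_k)$ (valid for $Q_k=\sigma_kQ(\cdot-z_k)$ since $f$ is odd) and integrate by parts: $\int\{\nabla R\cdot\nabla\e+R\e\}=\int\{f(Q_1)+f(Q_2)\}\e$, so this term equals $-\int G\,\e$ with $G$ the interaction term of~\eqref{def:G}; by Cauchy--Schwarz and~\eqref{tech2} this is $O(q(|z|)\|\e\|_{L^2})=O(q(r)\mathcal N)$. For the leading term $\frac12\int\{|\nabla R|^2+R^2-2F(R)\}$, expanding $R=Q_1+Q_2$ gives $2E(Q,0)$ (using that $|\nabla Q_k|^2,Q_k^2,F(Q_k)$ do not see the sign $\sigma_k$) plus the cross contribution $\int\nabla Q_1\cdot\nabla Q_2+\int Q_1Q_2-\int\{F(R)-F(Q_1)-F(Q_2)\}$; again by the elliptic equation the first two integrals combine to $\int f(Q_1)Q_2=\langle f(Q_1),Q_2\rangle$, and by~\eqref{tech3} the last integral is $\langle f(Q_1),Q_2\rangle+\langle f(Q_2),Q_1\rangle+O(e^{-\frac54|z|})$. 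Hence the cross contribution is $-\langle f(Q_2),Q_1\rangle+O(e^{-\frac54|z|})$, and~\eqref{new3} yields $-\sigma c_1g_0q(|z|)+O(|z|^{-1}q(|z|))$, noting $e^{-\frac54|z|}\lesssim|z|^{-1}q(|z|)$ since $|z|^{-1}q(|z|)\sim\kappa|z|^{-\frac{N+1}{2}}e^{-|z|}$.

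Finally I would pass from $|z|$ to $r$: by~\eqref{eq:new1}, $\big|r-|z|\big|\lesssim\mathcal N$, and since $q$ is smooth with $|q'|\lesssim q$ (from~\eqref{Qdec}) and comparable on bounded intervals, $q(|z|)=q(r)(1+O(\mathcal N))$ and $|z|^{-1}q(|z|)=r^{-1}q(r)(1+O(\mathcal N))$; thus $-\sigma c_1g_0q(|z|)=-\sigma c_1g_0q(r)+O(q(r)\mathcal N)$ and $O(|z|^{-1}q(|z|))=O(r^{-1}q(r))$, while $q(r)\mathcal N$ absorbs the earlier $q(|z|)\mathcal N$. Collecting the four contributions to $\mathcal P$ and adding the $O(\mathcal N^2)$ kinetic term gives exactly~\eqref{eq:E:R}. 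The only point requiring genuine care is the bookkeeping of the leading term: seeing that the quadratic-in-$Q_k$ cross terms plus half of the interaction potential collapse, via the elliptic equation, to precisely $-\langle f(Q_2),Q_1\rangle$, so that the sharp asymptotics~\eqref{new3} can be invoked and all remaining interaction errors are genuinely $o(q(r))$ — everything else is routine.
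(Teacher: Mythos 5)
Your proof is correct and follows essentially the same route as the paper: expand $E(\vec u)$ in $(\e,\eta,\ell_k)$, identify the linear-in-$\e$ term as $-\int G\e=O(q(r)\mathcal N)$ via the elliptic equation and~\eqref{tech2}, bound the quadratic remainder by $O(\mathcal N^2)$, expand the cross term in $E(R,0)$ to $-\langle f(Q_2),Q_1\rangle$ using the elliptic equation and~\eqref{tech3}, and apply~\eqref{new3} followed by the $|z|\to r$ conversion. The only cosmetic difference is that the paper passes through $g(|z|)$ and invokes~\eqref{eq:new1} while you work directly with $g_0 q(|z|)$ and argue $q(|z|)=q(r)(1+O(\mathcal N))$; the two are equivalent.
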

\begin{proof}
Expanding $E(u,\partial_t u)$ using the decomposition~\eqref{def:ee}, integration by parts,
the equation $-\Delta Q_k+Q_k-f(Q_k)=0$ and the definition of $G$ in~\eqref{def:G}, we find
\begin{align*}
2 E(u,\partial_{t}u)
&= \int \left|\partial_{t}u\right|^{2} + 2 E\left(R,0\right)
-2\int G\e \\
&\quad
+ \int \left(|\nabla\e|^2 + \e^2 -2 F(R+\e)+2 F(R)+2f(R)\e\right).
\end{align*}
Thus, using \eqref{tech2}, the subcriticality of $p$ and Sobolev embedding, there hold
\begin{equation*}
2 E(u,\partial_{t}u)
=\int \left|\partial_{t}u\right|^{2} + 2 E\left(R,0\right)
+O \big(q(r)\|\vve\|_\ENE+\|\vve\|_\ENE^2 \big).
\end{equation*}
Note that $\partial_t u = \eta-\sum_{k=1,2} (\ell_k\cdot \nabla) Q_k $ implies $\|\partial_t u\|_{L^2}\lesssim \mathcal N$.
Next, by direct computation, $-\Delta Q_1+Q_1-f(Q_1)=0$ and then \eqref{new3},~\eqref{on:gg} and \eqref{tech3}
\begin{align*}
E(Q_1 +Q_2 ,0)
&=2E(Q,0)+\int \left(\nabla Q_1 \cdot\nabla Q_2 +Q_1 Q_2 -f(Q_1 )Q_2 -f(Q_2 )Q_1 \right) \\
&\quad-\int \left(F(R)-F(Q_1 )-F(Q_2 )-f(Q_1 )Q_2 -f(Q_2 )Q_1 \right) \\
&=2E(Q,0)-\int f(Q_2) Q_1+ O (e^{-\frac 54|z|}) \\
&=2E(Q,0)- \sigma  c_1g(|z|) + O(|z|^{-1} g(|z|)).
\end{align*}
Using \eqref{eq:new1}, 
$g(|z|) =g_0 q(r) +   O(r^{-1} q(r)) +O(q(r)\mathcal N )$
and the proof is complete.
\end{proof}

As a consequence of~\eqref{energy}, \eqref{suite0} and \eqref{eq:E:R}, we obtain the following estimate.
\begin{corollary}\label{cor:ener}
Let $\vec u$ be a $2$-solitary wave solution of~\eqref{nlkg} satisfying the decomposition of Lemma~\ref{le:dec} on $[t,\infty)$, for some $t\in \RR$.
Then,
\begin{equation}\label{suite}
\int_{t}^{+\infty} \|\partial_{t} u(s)\|_{L^2}^2 \ud s \lesssim q(r(t)) + \mathcal N^2(t).
\end{equation}
\end{corollary}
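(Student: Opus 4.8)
The plan is to read off \eqref{suite} directly from the energy dissipation law \eqref{energy} together with the two descriptions of the energy of a near-$2$-soliton state, namely \eqref{suite0} and \eqref{eq:E:R}.

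First I would integrate \eqref{energy} on $[t,t_2]$ and let $t_2\to\infty$. Since $\vec u$ is a $2$-solitary wave, \eqref{energy} shows that $s\mapsto E(\vec u(s))$ is nonincreasing, and it is bounded from below (this is the content of the remark following Definition~\ref{def:2sol}); hence the limit exists and, by \eqref{suite0}, equals $2E(Q,0)$. Passing to the limit $t_2\to\infty$ in \eqref{energy} thus gives the exact identity
\[
2\alpha \int_t^{\infty} \|\partial_t u(s)\|_{L^2}^2 \ud s = E(\vec u(t)) - 2E(Q,0).
\]

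Next I would evaluate the expansion \eqref{eq:E:R} at the fixed time $t$ — this is legitimate since the decomposition of Lemma~\ref{le:dec} is assumed to hold on $[t,\infty)$, in particular at $t$ — to obtain
\[
2\alpha \int_t^{\infty} \|\partial_t u(s)\|_{L^2}^2 \ud s
= -\sigma c_1 g_0\, q(r(t)) + O\big(r(t)^{-1}q(r(t))\big) + O\big(q(r(t))\,\mathcal N(t)\big) + O\big(\mathcal N(t)^2\big).
\]
Finally I would absorb the error terms: for $r$ large one has $r^{-1}q(r)\lesssim q(r)$, and Young's inequality gives $q(r)\mathcal N\lesssim q(r)+\mathcal N^2$; taking absolute values (the left-hand side being nonnegative) yields $\int_t^{\infty}\|\partial_t u(s)\|_{L^2}^2\ud s \lesssim q(r(t))+\mathcal N(t)^2$, which is \eqref{suite}.

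There is no real obstacle here: all the genuine analysis has already been packaged into the energy expansion \eqref{eq:E:R}. The one point deserving a word of care is the passage to the limit $\lim_{s\to\infty}E(\vec u(s)) = 2E(Q,0)$, which relies both on the lower bound for the energy and on the convergence of $\vec u(t_n)$ to the two-soliton along the defining sequence — both of which are subsumed in \eqref{suite0}.
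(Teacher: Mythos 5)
Your argument is exactly the one the paper intends when it states the corollary follows "as a consequence of \eqref{energy}, \eqref{suite0} and \eqref{eq:E:R}": integrate the dissipation identity on $[t,\infty)$, identify the limit as $2E(Q,0)$, and bound $E(\vec u(t))-2E(Q,0)$ via the expansion \eqref{eq:E:R}. The absorption of the cross term $q(r)\mathcal N$ by Young's inequality and of $r^{-1}q(r)$ by $q(r)$ is correct, so the proof is complete and matches the paper.
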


\section{Proofs of Theorems~\ref{th:1} and \ref{th:2}}\label{S:3}

\subsection{General estimates}\label{S:2.5}

\begin{proposition}\label{le:BS}
There exists $C>0$ and $\delta_{1}>0$ such that the following hold.
For any $0<\delta<\delta_{1}$ and any $2$-solitary wave $\vec u$ of~\eqref{nlkg}, there exists  $\TD\in \RR$ such that
$\vec u(t)$ admits a decomposition as in Lemma~\ref{le:dec} in a neighborhood of $\TD$ with
\begin{equation}\label{at:T1}
\mathcal N(\TD)\leq \delta\quad \hbox{and}\quad q(r(\TD))\leq \delta^2.
\end{equation}
Moreover, for any such $T_{\delta}$, it holds:
\begin{description}
\item[Same sign case] if $\sigma=1$ then there exists $\TS>\TD$ such that $\vec u$ admits a decomposition as in Lemma~\ref{le:dec} on $[\TD,\TS]$ 
with
\begin{equation}\label{eq:pr:th:1}
\forall t \in [\TD,\TS], \quad\mathcal N(t)\leq C \delta \quad \text{and}\quad q(r(\TS))=\delta^\frac 32.
\end{equation}

\item[Opposite sign case] if $\sigma=-1$ then $\vec u$ admits a decomposition as in Lemma~\ref{le:dec} for all~$t\geq \TD$ and satisfies
\begin{equation}\label{eq:afaire2}
\forall t \in [\TD,\infty), \quad  \mathcal N(t) \leq C \delta
\quad \text{and} \quad
\bigg|\frac 1{q(r(t))}-\frac{g_0}{\alpha} t\bigg|\lesssim \frac t{|\log \delta|} + \tilde C
\end{equation}
for some $\tilde C>0$.
\end{description}
\end{proposition}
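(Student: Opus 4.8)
The plan is a continuity (bootstrap) argument built on the evolution laws collected in Lemma~\ref{le:new}; the decisive difficulty is that the unstable component must be controlled \emph{backwards} in time, using the very sequence of times entering the definition of a $2$-solitary wave.

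\textbf{Construction of $\TD$ and the bootstrap.} By Definition~\ref{def:2sol} there are $t_n\to\infty$ with $\|u(t_n)-\sum_k\sigma_kQ(\cdot-\xi_{k,n})\|_{H^1}+\|\partial_tu(t_n)\|_{L^2}\to0$ and $|\xi_{1,n}-\xi_{2,n}|\to\infty$, hence $d(\vec u(t_n);\gamma)\to0$ and for $n$ large Lemma~\ref{le:dec} applies near $t_n$, producing a decomposition with $\mathcal N(t_n)\to0$ (using $|\ell_k|\lesssim\|\partial_tu\|_{L^2}$) and $q(r(t_n))\to0$; taking $n$ large I set $\TD:=t_n$, so $\mathcal N(\TD)\le\delta$, $q(r(\TD))\le\delta^2$ and therefore $r(\TD)\gtrsim|\log\delta|$. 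Now fix $\gamma$ small but independent of $\delta$, then $C$ large (depending only on $\mu$ and the implicit constants of Lemmas~\ref{le:dec}--\ref{le:new}), then $\delta$ small. Let $T_{**}$ be the supremum of $T\ge\TD$ such that the decomposition holds on $[\TD,T]$ with $\mathcal N\le C\delta$ there --- and, when $\sigma=1$, also $q(r)\le\delta^{3/2}$ there. Since $d(\vec u(t);\gamma)\lesssim\mathcal N(t)\le C\delta<\gamma$, the decomposition never breaks first, so at $T_{**}$ (if finite) either $\mathcal N=C\delta$ or ($\sigma=1$) $q(r)=\delta^{3/2}$; by~\eqref{eq:new2} it suffices to improve $\mathcal N\le C\delta$ strictly on $[\TD,T_{**})$, i.e.\ to bound $\mathcal F$ and $b:=\sum_k(a_k^+)^2$ by $o(C^2\delta^2)$ there.

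\textbf{The damped quantities and the distance (routine).} Gronwall applied to~\eqref{eq:damped} with the factor $e^{-2\mu t}$ gives $\mathcal F(t)+\BB(t)\lesssim\delta^2e^{-2\mu(t-\TD)}+\sup(\mathcal N^3+q\mathcal N)$; feeding in $\int_{\TD}^{T_{**}}\mathcal N^2\lesssim\delta^2$ (integrate~\eqref{eq:Mbis}, using $|\mathcal M|\lesssim\mathcal N^2$ and $\int q^2<\infty$ as obtained below) yields $\mathcal F(t)\lesssim\delta^2$ with constant depending only on $\mu$. The distance is governed by~\eqref{eq:dist}: for $\sigma=-1$ this forces $1/q(r(t))\gtrsim t$, so $q(r(t))\lesssim1/t$ (hence $\int q^2<\infty$) and $r(t)\ge r(\TD)\gtrsim|\log\delta|$; for $\sigma=1$ it forces $\tfrac{\mathrm d}{\mathrm dt}[1/q(r)]\le-\tfrac{g_0}{2\alpha}<0$ once the error terms are absorbed, so $q(r)$ strictly increases and reaches $\delta^{3/2}$ at a finite time, which makes $T_{**}<\infty$ (and $\int_{\TD}^{T_{**}}q^2\lesssim\delta^{3/2}$ from the explicit solution of the ODE).

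\textbf{The unstable component (the main obstacle).} By~\eqref{eq:b}, $b$ grows exponentially, so it cannot be estimated forward; instead one uses $b(t_n)\to0$. Writing~\eqref{eq:b} as $\tfrac{\mathrm d}{\mathrm dt}(be^{-2\nu^+t})=O((\mathcal N^3+q\mathcal N)e^{-2\nu^+t})$ and integrating backwards from $t_n$ to $t\in[\TD,T_{**})$,
\begin{equation*}
b(t)\le b(t_n)e^{-2\nu^+(t_n-t)}+\int_t^{t_n}\big|\dot b-2\nu^+b\big|(s)\,e^{-2\nu^+(s-t)}\,\ud s .
\end{equation*}
When $\sigma=-1$, one shows a posteriori that $T_{**}=\infty$, so the integrand is controlled on all of $[t,\infty)$ by $\mathcal N\le C\delta$ and $q\lesssim1/t$; letting $n\to\infty$ gives $b(t)\lesssim\sup_{s\ge t}(\mathcal N^3+q\mathcal N)=o(\delta^2)$ (equivalently $b(t)\lesssim\mathcal F(t)+q(r(t))$, via $\mathcal M(t)\ge\mathcal M(t_n)-C\int_t^{t_n}q^2\ge-C\int_t^\infty q^2\gtrsim-q(r(t))$). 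When $\sigma=1$, the loss of control past $T_{**}$ must be overcome: split the integral at $T_{**}$; on $[t,T_{**}]$ it is $\lesssim\sup_{[\TD,T_{**}]}(\mathcal N^3+q\mathcal N)\lesssim\delta^{5/2}$, while on $[T_{**},t_n]$ it is $\le e^{-2\nu^+(T_{**}-t)}$ times a quantity growing at most polynomially in $t_n$ (using only that the $2$-solitary wave is a global, bounded solution, so $|\dot b-2\nu^+b|$ is polynomially bounded there). Since $T_{**}-\TD\gtrsim\delta^{-2}$, the factor $e^{-2\nu^+(T_{**}-t)}$ is $\le e^{-c\delta^{-2}}$ whenever $t$ is at fixed distance below $T_{**}$, which annihilates the tail; the remaining range $(T_{**}-O(1),T_{**}]$ is handled by a trivial fixed-length backward bound. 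In all cases $b(t)=o(\delta^2)$ on $[\TD,T_{**})$, so by~\eqref{eq:new2} $\mathcal N^2\le\tfrac1\mu\mathcal F+\tfrac1{2\mu^2}b\lesssim\delta^2+o(\delta^2)$ with a constant depending only on $\mu$; choosing $C$ larger than its square root (and $\delta$ small accordingly) improves $\mathcal N\le C\delta$ strictly, completing the continuity argument. Thus, when $\sigma=-1$, $T_{**}=\infty$ and~\eqref{eq:afaire2} follows by integrating~\eqref{eq:dist} on $[\TD,t]$, using $\int r^{-1}\lesssim t/r(\TD)\lesssim t/|\log\delta|$ and $\int\mathcal N^2/q\lesssim(\sup1/q)\int\mathcal N^2\lesssim t\delta^2\lesssim t/|\log\delta|$, with $\tilde C=|1/q(r(\TD))-(g_0/\alpha)\TD|$; when $\sigma=1$, $\TS:=T_{**}<\infty$, $q(r(\TS))=\delta^{3/2}$ and $\mathcal N\le C\delta$ on $[\TD,\TS]$, as claimed.
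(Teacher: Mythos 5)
Your construction of $\TD$ and the treatment of the damped quantities $\mathcal F$, $\BB$ are sound and essentially follow the paper. The treatment of the distance, while rougher than the paper's use of the Liapunov-corrected quantities $R^\pm$ (which absorb the problematic $\mathcal N^2/q(r)$ error before integration), could likely be pushed through with care. The genuine gap, however, is in the estimate on the unstable component $b$, and it is serious: you miss the paper's central mechanism and substitute an argument that cannot be closed.

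You propose to integrate $\dot b = 2\nu^+ b + O(\mathcal N^3 + q\mathcal N)$ backwards from the times $t_n$ where $b(t_n)\to 0$. This does not work. When $\sigma=-1$ you write ``one shows a posteriori that $T_{**}=\infty$, so the integrand is controlled on $[t,\infty)$'' --- but $T_{**}=\infty$ is precisely the conclusion of the bootstrap you are in the middle of closing, and closing it requires the very bound on $b$ you are deriving. This is circular. When $\sigma=1$ you split the integral at $T_{**}$ and claim the tail over $[T_{**},t_n]$ is killed by $e^{-2\nu^+(T_{**}-t)}$. For $t$ near $\TD$ this factor is $e^{-c\delta^{-2}}$ and all is well, but for $t$ within $O(1)$ of $T_{**}$ it is merely $O(1)$, so the tail contributes an amount that is not $o(\delta^2)$ unless you already know the integrand on $[T_{**},t_n]$ is small --- and you do not. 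Worse, on $[T_{**},t_n]$ the decomposition of Lemma~\ref{le:dec} may fail altogether (nothing prevents $\vec u$ from temporarily leaving the tube around the two-soliton sum), so $b$ is not even defined there, let alone ``polynomially bounded.'' The ``trivial fixed-length backward bound'' on $(T_{**}-O(1),T_{**}]$ would require knowing $b(T_{**})$ is already $o(\delta^2)$, which is the thing to be proved.

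The paper's argument is localized and non-circular precisely where yours is not. It never looks beyond $T_*$. The key ingredient you omit is the energy dissipation identity, packaged as Corollary~\ref{cor:ener}: for a $2$-solitary wave, $\int_{\TD}^{\infty}\|\partial_t u\|_{L^2}^2 \ud t \lesssim q(r(\TD))+\mathcal N^2(\TD)\lesssim\delta^2$. This is a genuine global bound (independent of the bootstrap) because it follows from $E(\vec u(t))\to 2E(Q,0)$ and the expansion \eqref{eq:E:R} at $t=\TD$. From it, $\int\|\eta\|_{L^2}^2\lesssim\delta^2$ on the bootstrap interval. The paper then argues by contradiction: if $b$ ever reaches $\tfrac{C}{2}\delta^2$, the ODE \eqref{eq:b} forces it to have been $\geq\tfrac{C}{4}\delta^2$ for a time of length $\approx\tfrac{\log 2}{2\nu^+}$ just before, so $\int_{t_1}^{t_2}b\gtrsim C\delta^2$; but $a_k^+=\tfrac{\zeta^+}{\zeta^-}a_k^-+\tfrac{\zeta^--\zeta^+}{\zeta^-}\langle\eta,Y_k\rangle$ gives $\int_{t_1}^{t_2}b\lesssim\int(a_k^-)^2+\int\|\eta\|^2\lesssim\delta^2$, a contradiction for $C$ large. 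None of this uses information past $T_*$. You need to replace your backward-integration step with this energy/contradiction mechanism, or with some other argument that avoids the uncontrolled region $[T_{**},t_n]$.
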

\begin{proof}
Let $\vec u$ be a $2$-solitary wave of \eqref{nlkg}.
For $\delta$ to be taken small enough, the existence of $\TD$ satisfying \eqref{at:T1} is a consequence of Definition~\ref{def:2sol}
and Lemma~\ref{le:dec}.

For a constant $C>1$ to be taken large enough, we introduce the following bootstrap estimate
\begin{equation}\label{BS:1}
\mathcal N\leq C \delta,\quad
q(r)\leq \delta^{\frac 32},\quad
b\leq C \delta^2.
\end{equation}
Set
\begin{equation*}
\TS=\sup\big\{t\in[\TD,\infty) \text{ such that \eqref{BS:1} holds on $[T_{\delta},t]$} \big\}>T_{\delta}.
\end{equation*}
In the remainder of the proof, the implied constants in $\lesssim$ or $O$ \emph{do not depend} on the constant $C$ appearing in the bootstrap assumption \eqref{BS:1}. We start by improving the bootstrap assumption on $\mathcal N$ and $b$.

\bigskip

\emph{Estimate on $\mathcal N$.}
We now improve the bootstrap assumption \eqref{BS:1} on $\mathcal N$.

From~\eqref{eq:damped} and \eqref{BS:1}, it holds on $[\TD,\TS)$,
\begin{equation*}
\frac \ud{\ud t} \left[ e^{2\mu t} \mathcal F\right] \lesssim C^3 \delta^3 e^{2\mu t} + C \delta^{\frac 5 2} e^{2\mu t} \lesssim
 \delta^2 e^{2\mu t},
 \end{equation*}
for $\delta>0$ small enough. From~\eqref{eq:new2} and \eqref{at:T1}, we have $\mathcal F(\TD)\lesssim \delta^2$. Thus, integrating the above estimate on $[\TD,t]$,
for any $t\in [\TD,\TS)$, it holds $\mathcal F\lesssim \delta^2$.
In particular, by \eqref{eq:new2}, we obtain
\begin{equation}\label{bnf2}
\|\vve\|_\ENE^2 \lesssim \mathcal F+b
\lesssim C \delta^2.
\end{equation}
Arguing similarly for the quantity $\BB$, we have
\begin{equation}\label{bnf3}
\sum_{k=1,2} |\ell_k|^2 + \sum_{k=1,2} (a_k^-)^2\lesssim \BB \lesssim \delta^{2}.
\end{equation}
Hence we obtain
\[ \forall t \in [\TD, \TS), \quad \mathcal N(t) \lesssim \sqrt{C} \delta. \]
For $C$ large enough, this strictly improves the estimate \eqref{BS:1} of~$\mathcal N$ on $[\TD,\TS)$.

\bigskip

\emph{Estimate on $b$.}
We now prove that for $C$ large enough, it holds 
\[ \forall t \in [\TD,\TS), \quad b(t) \le \frac C2 \delta^2. \]
From \eqref{suite} in Corollary~\ref{cor:ener} , we have
\begin{equation}\label{int1.3}
\int_{\TD}^{+\infty} \|\partial_{t} u(s)\|_{L^2}^2 \ud s \lesssim q(r(\TD)) + \mathcal N^2(\TD)\lesssim \delta^2.
\end{equation}
By \eqref{at:T1}, we have $b(\TD)\lesssim \delta^2$.

For the sake of contradiction, take $C$ large and assume that there exists $t_2\in [\TD,\TS)$ such that
\begin{equation*}
b(\Td)= \frac C2 \delta^2,\quad b(t)< \frac C2 \delta^2 \quad \text{on } [\TD,t_{2}).
\end{equation*}
On the one hand, by continuity of $b$, there exists $\Tu\in [\TD,t_{2})$ such that
\begin{equation*}
b(\Tu)=\frac C4  \delta^2\quad \text{and}\quad
b(t)> \frac C4 \delta^2 \quad \text{on } (\Tu,t_{2}].
\end{equation*}
Using~\eqref{eq:b} and~\eqref{BS:1}, we have
\begin{equation*}
\frac \ud{\ud t} b = 2\nu^+ b + O(C^3 \delta^3 + C \delta^{\frac 52})
\end{equation*}
which implies (for $\delta$ small enough with respect to $1/C$)
\begin{equation}\label{bnf5}
\Td-\Tu=\frac{\log 2}{2\nu^+}+O(\delta^{\frac{1}{2}}),
\end{equation}
and thus
\begin{equation}\label{bnf6}
\int_{\Tu}^{\Td} b \gtrsim C \delta^2.
\end{equation}

On the other hand, by \eqref{def:gs}, \eqref{def:ee},
$\partial_t u=\eta-\sum_{k=1,2} (\ell_k\cdot\nabla) Q_k$ and~\eqref{bnf3} we have
$\|\eta\|_{L^2}^2 \lesssim \|\partial_t u\|_{L^2}^2 + \delta^{2}$ on $[\TD,\TS]$.
Thus, using \eqref{int1.3} and \eqref{bnf5},
\begin{equation}\label{bnf7}
\int_{\Tu}^{\Td}  \|\eta(t)\|_{L^2}^2 \ud t \lesssim \delta^{2}.
\end{equation}
By the definition of $a_k^\pm$, one has
\begin{equation*}
a_k^+=\zeta^+\langle \e,Y_k\rangle+\langle \eta,Y_k\rangle, \quad
a_k^-=\zeta^-\langle \e,Y_k\rangle+\langle \eta,Y_k\rangle
\end{equation*}
and thus
\begin{equation*}
a_k^+ = \frac{\zeta^+}{\zeta^-} a_k^- + \frac{\zeta^- -\zeta^+}{\zeta^-} \langle \eta,Y_k\rangle.
\end{equation*}
Combining \eqref{bnf3}, \eqref{bnf5} and \eqref{bnf7}, we find
$\displaystyle \int_{\Tu}^{\Td} b \lesssim \delta^{2}$,
a contradiction with~\eqref{bnf6} for $C$ large enough.

\bigskip

We can now prove the two statements of Proposition \ref{le:BS} themselves. First observe that~\eqref{bnf2} and~\eqref{bnf3} gives on $[\TD,\TS)$
\[ \exp(\pm K \mathcal M) = 1 + O(C \delta^2). \]

\emph{Same sign case.}
We use the quantity $R^+$ defined in Lemma~\ref{le:new}.
From~\eqref{eq:Rp}, for $r$ large,
\begin{equation*}
\frac{\ud}{\ud t} R^+\leq \left(-\frac{g_0}{\alpha}+2Kr^{-1}\right)\exp\left( K\mathcal M \right)
\leq-\frac{g_0}{2\alpha}.
\end{equation*}
Therefore, for all $t\in [\TD,\TS]$, it holds
\begin{equation*}
R^+(t)\leq R^+(\TD) -\frac{g_0}{2\alpha} ( t-\TD).
\end{equation*}
Assuming $\TS=\infty$, $R^+(t)$ becomes negative for some time, which is absurd.
Since all the estimates in \eqref{BS:1} have been strictly improved on $[\TD,\TS]$ except the one for~$q(r)$,
it follows by a continuity argument that
$q(r(\TS))=\delta^\frac 32$.

\bigskip

\emph{Opposite sign case.}
Here we use both $R^+$ and $R^-$. First notice that on $[\TD,\TS]$,
\begin{equation} \label{mai3}
 R^- (1+ O(C \delta^2)) \le \frac{1}{q(r)} \le R^+ (1+ O(C \delta^2)). 
\end{equation}
From \eqref{eq:Rp},
\begin{equation*}
\frac{\ud }{\ud t} R^+\leq \frac {g_0}{\alpha}\left( 1+\frac{O(K)}{|\log \delta|}\right) (1+O(C\delta^2))
\leq \frac {g_0}{\alpha}\left(1+\frac{O(K)}{|\log \delta|}\right),
\end{equation*}
for $\delta>0$ small enough with respect to $C$.
Thus, for any $t\in[\TD,\TS)$, it holds by integration on $[\TD,t]$
\begin{equation}\label{bnf12}
R^+(t) \leq R^+(\TD) +\frac {g_0}{\alpha} \left(1+\frac{O(K)}{|\log \delta|}\right) (t-\TD).
\end{equation}
Similarly, we check that
\begin{equation}\label{bnf11}
R^-(t) \geq R^-(\TD) + \frac {g_0}{\alpha} \left(1-\frac{O(K)}{|\log \delta|}\right) (t-\TD).
\end{equation}
Note that \eqref{at:T1}, \eqref{mai3} and \eqref{bnf11} imply that
\[ \forall t \in [\TD,\TS), \quad  \frac 1{q(r(t))} \geq \frac 12 \left(\delta^{-2} + \frac{g_0}{\alpha} (t-\TD)\right). \]
In particular, the estimate on $q(r)$ in \eqref{BS:1} is strictly improved for $\delta$ small enough,
As a consequence $\TS=\infty$. 

Last, we observe that \eqref{mai3},  \eqref{bnf12} and \eqref{bnf11} imply~\eqref{eq:afaire2} where $\tilde C$ can be written in terms of $R^\pm(T_\delta)$, $T_\delta$ and $\delta$.
 \end{proof}

\subsection{Proof of Theorem~\ref{th:1}}
Let $\vec u$ be a $2$-solitary wave of~\eqref{nlkg} with $\sigma_{1}=\sigma_{2}$.

Let $\delta >0$ to be fixed later, and let $\TD$ and $\TS$ be as Proposition \ref{le:BS}. Using~\eqref{eq:E:R} at $t=\TS$ and~\eqref{eq:pr:th:1}, we have
\begin{equation*}
E(u(\TS),\partial_{t}u(\TS))=2E(Q,0)-c_1 g_0\delta^\frac 32 +O(|\log \delta|^{-1} \delta^\frac 32).
\end{equation*}
This allows to fix a $\delta >0$ small enough so that
\[ E(u(\TS),\partial_{t}u(\TS)) < 2 E(Q,0). \]
This contradicts the fact that the energy is decreasing and converges to $2E(Q,0)$ as
$t\to \infty$.

\subsection{Proof of Theorem~\ref{th:2}}
Let $\vec u$ be a $2$-solitary wave of~\eqref{nlkg} with $\sigma_{1}=-\sigma_{2}$.

\begin{proposition}
There exists $T >0 $ such that the decomposition of $\vec u$ satisfies
\begin{equation} \label{eq:strong}
\forall t \ge T, \quad  q(r(t))\lesssim t^{-1},\quad \mathcal N(t) \lesssim t^{-1}.
\end{equation}
\end{proposition}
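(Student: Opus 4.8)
\emph{Step 1: the distance.} The first estimate in~\eqref{eq:strong} is immediate from the opposite-sign case of Proposition~\ref{le:BS}: since $\delta$ is fixed small, the coefficient $g_0/\alpha$ in~\eqref{eq:afaire2} dominates the error of size $O(1/|\log\delta|)$, so that $1/q(r(t))\gtrsim t$ for $t$ large, i.e. $q(r(t))\lesssim t^{-1}$. It then remains only to upgrade the global a priori bound $\mathcal N(t)\leq C\delta$ (also provided by Proposition~\ref{le:BS}) to the sharp decay $\mathcal N(t)\lesssim t^{-1}$. The plan is to obtain a self-improving inequality for the decreasing quantity $\phi(t):=\sup_{s\geq t}\mathcal N(s)$, which satisfies $\phi\leq C\delta$ on $[\TD,\infty)$, of the closed form $\phi(2t)^2\leq \vartheta\,\phi(t)^2+Ct^{-2}$ with $\vartheta<\tfrac14$, and then to iterate it dyadically.

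\emph{Step 2: controlling the unstable mode (the main difficulty).} Writing $\frac{\ud}{\ud t}\big(e^{-2\nu^+t}b\big)=e^{-2\nu^+t}(\dot b-2\nu^+b)$ and using that $b$ is globally bounded (so $e^{-2\nu^+t}b(t)\to 0$), I would integrate~\eqref{eq:b} from $t$ to $+\infty$ to get $b(t)=-\int_t^\infty e^{-2\nu^+(s-t)}(\dot b-2\nu^+b)(s)\,\ud s$, hence
\[ b(t)\lesssim \sup_{s\geq t}\big(\mathcal N(s)^3+q(r(s))\,\mathcal N(s)\big)\lesssim \phi(t)^3+t^{-1}\phi(t), \]
using $\mathcal N(s)\leq\phi(t)$ and $q(r(s))\lesssim s^{-1}\leq t^{-1}$ for $s\geq t$. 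This is the heart of the proof and its main obstacle: the forward exponential instability prevents treating $b$ by a naive bootstrap on a finite interval, and one instead integrates \emph{towards} $t=+\infty$ — legitimate precisely because Proposition~\ref{le:BS} has already established that $b$ stays globally small.

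\emph{Step 3: the damped part and the closed inequality.} Using $\mathcal N^2\lesssim \mathcal F+b$ and $\mathcal N\leq C\delta$ (from~\eqref{eq:new2}), I would absorb~\eqref{eq:damped} into the differential inequality $\frac{\ud}{\ud t}\mathcal F+\mu\mathcal F\lesssim b+q(r)^2$; a Gronwall estimate on $[t,s]$ together with Step~2 then gives, for $s\geq 2t$ and $t$ large enough that $e^{-\mu t}\lesssim t^{-2}$,
\[ \mathcal F(s)\lesssim e^{-\mu(s-t)}\mathcal N(t)^2+\sup_{\tau\geq t}\big(b(\tau)+q(r(\tau))^2\big)\lesssim \phi(t)^3+t^{-1}\phi(t)+t^{-2}. \]
Combining with the bound on $b(s)$ from Step~2 and $\mathcal N(s)^2\lesssim \mathcal F(s)+b(s)$, then taking the supremum over $s\geq 2t$, I obtain $\phi(2t)^2\lesssim \phi(t)^3+t^{-1}\phi(t)+t^{-2}$. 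Since $\phi\leq C\delta$ we have $\phi(t)^3\lesssim\delta\,\phi(t)^2$, and by Young's inequality $t^{-1}\phi(t)\leq\delta\,\phi(t)^2+\tfrac1{4\delta}t^{-2}$, so for $\delta$ small and $t\geq T$ large, $\phi(2t)^2\leq \vartheta\,\phi(t)^2+C\,t^{-2}$ with $\vartheta<\tfrac14$.

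\emph{Step 4: iteration and conclusion.} Choose $A$ large enough that $A^2>4C/(1-4\vartheta)$ and $\phi(T)\leq A\,T^{-1}$ (possible since $\phi(T)\leq C\delta$, enlarging $A$ if necessary). If $\phi(t)\leq A\,t^{-1}$, then $\phi(2t)^2\leq \vartheta A^2t^{-2}+Ct^{-2}=4(\vartheta A^2+C)(2t)^{-2}\leq A^2(2t)^{-2}$; by induction $\phi(2^kT)\leq A(2^kT)^{-1}$ for all $k\geq 0$, and for $t$ in the block $[2^kT,2^{k+1}T)$ one gets $\phi(t)\leq\phi(2^kT)\leq 2A\,t^{-1}$. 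Hence $\mathcal N(t)\leq\phi(t)\lesssim t^{-1}$, which together with Step~1 proves~\eqref{eq:strong}.
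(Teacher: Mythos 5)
Your proposal is correct, and it is built around the same central insight as the paper: exploit the global smallness bound $\mathcal N\leq C\delta$ from Proposition~\ref{le:BS} to justify a \emph{forward} integration of the $b$-equation on $[t,\infty)$, which turns the exponential \emph{growth} $\dot b\approx 2\nu^+ b$ into a decay estimate for $b(t)$. The implementation differs, however. The paper introduces the modified quantity $\tilde b = b - \epsilon\mathcal F$, for which the forward integration gives directly $\tilde b(t)\lesssim \epsilon^{-2}t^{-2}$, hence $b(t)\lesssim \epsilon^{-2}t^{-2}+\epsilon\mathcal F(t)$; plugging this back yields a \emph{closed} differential inequality $\frac{\ud}{\ud t}(\mathcal F+\tfrac{b}{2\mu})\leq-\mu(\mathcal F+\tfrac{b}{2\mu})+C\epsilon^{-2}t^{-2}$, which integrates in one shot. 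You instead integrate $b$ itself forward, which leaves the right-hand side in terms of $\phi(t)=\sup_{s\geq t}\mathcal N(s)$; the price is that the resulting bound $b(t)\lesssim\phi(t)^3+t^{-1}\phi(t)$ is not yet closed, so after the Gronwall estimate on $\mathcal F$ you need the dyadic self-improvement $\phi(2t)^2\leq\vartheta\,\phi(t)^2+Ct^{-2}$ ($\vartheta<\tfrac14$) and an induction. Both are elementary and both reach the same conclusion; the paper's $\tilde b$-trick is a touch slicker since it avoids the iteration and the auxiliary monotone envelope $\phi$, but your version has the virtue of making the self-improvement structure fully explicit. One small remark: in Step~3 the absorption of the cubic term uses $\mathcal N^3\leq C\delta\,\mathcal N^2\lesssim\delta(\mathcal F+b)$ and the Young step $q(r)\mathcal N\leq\varepsilon\mathcal N^2+\tfrac{1}{4\varepsilon}q(r)^2$ \emph{before} the $\mu\mathcal F$ can absorb the $\mathcal F$-contribution; this requires $\delta$ (and the Young parameter $\varepsilon$) small relative to $\mu$, which is consistent with your later choice of $\delta$ but worth stating, as the paper does when it fixes $\epsilon$ before taking $\delta$ small.
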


\begin{proof}
Let $0 < \delta < \delta_1$ in the framework of Proposition \ref{le:BS}. From \eqref{eq:afaire2}, there exists $T>0$ large enough  such that
\begin{equation*}
\forall t \ge T/2, \quad \mathcal N(t) \leq C\delta,\quad q(r(t)) \leq \frac{2 g_0}{\alpha t}.
\end{equation*}
In particular,  from~\eqref{eq:b} and~\eqref{eq:damped}
\begin{equation}\label{bnf20}
\left| \frac{\ud b}{\ud t} - 2\nu^+ b\right|\lesssim \mathcal N^3 + t^{-1}\mathcal N,\quad
\frac{\ud}{\ud t}\mathcal F+2\mu \mathcal F\lesssim\mathcal N^3 + t^{-1}\mathcal N.
\end{equation}
Our goal is to obtain the decay rate of $\mathcal N$. The above bounds are not quite enough for this, due to the cubic term in $\mathcal N$ for which we have no decay yet, only smallness. In order to get around this, we will work on a modification $\tilde b$ of $b$.
Recall~\eqref{eq:new2}: 
\[ 0 \le \mathcal N^2 \lesssim \mathcal F+\frac{b}{2\mu}. \]
For $0<\epsilon \ll 1$ to be fixed, we observe that
\begin{equation}\label{bnf21}
\mathcal N^3 + t^{-1} \mathcal N \lesssim 
\epsilon^2 \mathcal N^2+ \epsilon^{-2} t^{-2}\lesssim
\epsilon^2\left(\mathcal F+\frac{b}{2\mu} \right)+  \epsilon^{-2}t^{-2}.
\end{equation}
(Here and below the implied constants do not depend on $\e$). Set $\tilde b=b- \epsilon  \mathcal F$ and observe that
\begin{equation*}
\tilde b=b- \epsilon \mathcal F
=\left(1+\frac{\epsilon}{2\mu}\right) b - \epsilon\left(\mathcal F+\frac{1}{2\mu} b\right)
\leq \left(1+\frac {\epsilon} {2\mu}\right) b  .
\end{equation*}
Therefore, using~\eqref{bnf20},
\begin{align*}
\frac{\ud \tilde b}{\ud t}
&\geq 2\nu^+ b + 2\epsilon \mu \mathcal F - C \epsilon^2\left(\mathcal F+\frac{b}{2\mu}\right)
-C\epsilon^{-2} t^{-2}\\
&\geq  (2\nu^+-\epsilon) b
+\left( 2 \epsilon \mu - C \epsilon^2 \right)\left(\mathcal F+\frac{b}{2\mu}\right)
-C\epsilon^{-2} t^{-2}\\
& \geq  \nu^+ \tilde b -C\epsilon^{-2} t^{-2},\end{align*}
where $\epsilon>0$ is taken small enough so that 
\begin{equation*}
(2\nu^+-\epsilon)\left(1+\frac {\epsilon} {2\mu}\right)^{-1}\geq \nu^+,\quad
2 \epsilon \mu - C \epsilon^2 >0.
\end{equation*}
This bound is suitable for our purpose. Integrating on $[t,\tau] \subset [T/2,+\infty)$, we obtain
\begin{equation*}
\tilde b(t) - e^{-\nu^+ (\tau-t)} \tilde b(\tau)
\lesssim \epsilon^{-2} \int_{t}^{\tau} e^{-\nu^+(s-t)} s^{-2} \ud s
\lesssim\epsilon^{-2}  t ^{-2}.
\end{equation*}
Passing to the limit as $\tau \to \infty$, one obtains for all $t\geq T/2$,
\begin{equation*}
 \tilde b(t) \lesssim \epsilon^{-2} t^{-2},\quad 
 b(t) \leq C \epsilon^{-2} t^{-2}+\epsilon \mathcal F(t).
\end{equation*}

Inserting this information into the equations \eqref{bnf20} of $\mathcal F$ and $b$  and using~\eqref{bnf21}, it holds
\begin{align*}
\frac{\ud}{\ud t} \left(\mathcal F+\frac{b}{2\mu}\right)
&\leq - 2\mu \mathcal F + \frac{\nu^+}{\mu} b + C\epsilon^2 \left(\mathcal F+\frac{b}{2\mu}\right)+ C\epsilon^{-2}t^{-2}\\
&\leq - (2\mu-C\epsilon^2) \left(\mathcal F+\frac{b}{2\mu}\right) + \left(1+\frac{\nu^+}{\mu} \right) b + C\epsilon^{-2}t^{-2}
\\
&\leq - \mu \left(\mathcal F+\frac{b}{2\mu}\right) + C\epsilon^{-2}t^{-2},
\end{align*}
by possibly choosing a smaller $\epsilon>0$.
Integrating on $[T/2,t]$, we obtain
\begin{equation*}
\left(\mathcal F+\frac{b}{2\mu}\right)(t)
- e^{- \mu (t-\frac{T}{2})} \left(\mathcal F+\frac{b}{2\mu}\right)(T/2)
\lesssim \int_{\frac{T}{2}}^t e^{- \mu (t-s)} s^{-2}\ud s
\end{equation*}
and so
\begin{equation*}
\mathcal N^2 (t)\lesssim \left(\mathcal F+\frac{b}{2\mu}\right)(t)
\lesssim   t^{-2} +  \delta^2 e^{\mu T/2} e^{-\mu t}
\end{equation*}
For $t \ge T>0$, the last term is bounded by $e^{-\mu t/2} \lesssim t^{-2}$, and this proves \eqref{eq:strong}.
\end{proof}

We complete the proof of Theorem~\ref{th:2}.
By~\eqref{def:ee} and~\eqref{eq:strong}, estimate~\eqref{eq:th:2} is proved.
Since~\eqref{eq:afaire2} holds for any $\delta>0$, it means  that
\begin{equation*}
\lim_{t\to \infty} \frac{1}{t q(r(t))} = \frac{g_0}{\alpha},
\end{equation*}
and, using the expansion~\eqref{Qdec} of $q$, this is equivalent to
\[ \lim_{t\to \infty} \frac{1}{t}  r(t)^{\frac{N-1}{2}} e^{r(t)} = \frac{\kappa g_0}{\alpha}. \]
This implies
\begin{equation*}
\lim_{t\to \infty} \left\{r(t)- \left(\log t-\frac 12(N-1)\log\log t+c_0\right)\right\}= 0,
\end{equation*}
for $c_0=\log \frac{\kappa g_0 }{\alpha} $. As $\mathcal N(t) \to 0$ (due to \eqref{eq:strong}) and in view of \eqref{eq:new1}, we obtain \eqref{eqq:th:2}.

Finally, we prove~\eqref{eqqq:th:2}. Let $1<\theta<\min(p-1,2)$.
We observe from~\eqref{for:y}, $|y-z|\lesssim \mathcal N$ and~\eqref{eq:strong} that
\begin{equation*}
\dot y = \frac 1{\alpha} \frac{z}{|z|} g(|z|)+O(\mathcal N^2+e^{-\theta |z|})
=\frac1{\alpha} \frac{y}{|z|} g(|z|)+O(t^{-\theta}).
\end{equation*}
Set $y=r\omega$, where $\omega\in \mathcal S_{\RR^N}(1)$.
We have $\dot y = \dot r \omega + r\dot \omega$, and using $\dot \omega \cdot \omega=0$, we find the estimate
$r |\dot \omega| = O(t^{-\theta})$. Thus, there exists $\omega^\infty\in \mathcal S_{\RR^N}(1)$ such that $\lim_{t\to \infty} \omega=\omega^\infty$.
Since $\displaystyle \frac{z}{\log t} = \frac{y}{\log t} +O(\mathcal N)$, we have proved $\displaystyle \lim_{t\to \infty} \frac{z(t)}{\log t}=\omega^\infty$.

From~\eqref{eq:z}, \eqref{eq:lbis} and~\eqref{eq:strong}, we also observe that
\begin{equation*}
 |2\dot z_1- { \dot z} |+|2 \dot z_2+{\dot z}| \lesssim t^{-\theta},
\end{equation*}
which is sufficient to complete the proof of~\eqref{eqqq:th:2}.

\section{Proof of Theorem~\ref{th:3}}\label{S:4}

\subsection{Preliminary result}
We use the notation from the beginning of Section~\ref{S:2}. We also use the constant
\begin{equation}
\beta := \frac{1}{2 \sqrt{\alpha^2 + \nu_0^2}} = \langle \vec Y^+, \vec Z^+ \rangle^{-1} >0.
\end{equation}

\begin{lemma}\label{le:W}
For any $(z_{1},z_{2},\ell_{1},\ell_{2})\in \RR^{4N}$ with $|z|$ large enough, there exist linear maps
\begin{equation*}
B:\RR^2 \to \RR^2,\quad  V_j:\RR^2\to \RR^2 \quad \text{for } j=1,\ldots,N,
\end{equation*}
smooth in $(z_{1},z_{2},\ell_{1},\ell_{2})$, satisfying
\begin{equation*}
\|B- \beta \Id\|\lesssim e^{-\frac 12|z|}, \quad \|V_j\|\lesssim e^{-\frac 12 |z|},
\end{equation*}
and such  that the function $W(a_1,a_2):\RR^N\to \RR$ defined by
\begin{equation*}
W(a_1,a_2) := \sum_{k=1,2} \bigg\{ B_k(a_1,a_2) Y_k+ \sum_{j=1}^N V_{k,j} (a_1,a_2)\partial_{x_j} Q_k\bigg\},
\end{equation*}
satisfies, for all $k=1,2$, $j=1,\ldots,N$,
\begin{equation*}
\langle W(a_1,a_2),\partial_{x_j} Q_k\rangle =0,\quad \langle W(a_1,a_2),Y_k\rangle = \beta a_k.
\end{equation*}
In particular, setting
\begin{equation*}
\vec W(a_1,a_2)=\begin{pmatrix} W(a_1,a_2)\\ \nu^+ W(a_1,a_2)\end{pmatrix}\quad\text{it holds}\quad
\langle \vec W(a_1,a_2),\vec Z_k^+\rangle= a_k.
\end{equation*}
\end{lemma}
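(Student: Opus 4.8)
The plan is to construct $W(a_1,a_2)$ explicitly as a linear combination of the functions $Y_1, Y_2$ and $\partial_{x_j}Q_1, \partial_{x_j}Q_2$, and to determine the coefficients $B_k$ and $V_{k,j}$ by imposing the $2N+2$ orthogonality/normalization conditions, which amounts to inverting a near-diagonal linear system. First I would fix the ansatz
\[
W(a_1,a_2) = \sum_{k=1,2}\Big\{ B_k Y_k + \sum_{j=1}^N V_{k,j}\,\partial_{x_j}Q_k\Big\},
\]
with $B=(B_1,B_2)$ and $V_j=(V_{1,j},V_{2,j})$ linear in $(a_1,a_2)$ to be found. Testing against $\partial_{x_i}Q_\ell$ and against $Y_\ell$ gives, for each $\ell,i$, a linear equation in the unknowns. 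The key structural input is that the diagonal (self) pairings are $O(1)$ and nondegenerate --- namely $\langle Y_k,Y_k\rangle = 1$, $\langle \partial_{x_j}Q_k,\partial_{x_i}Q_k\rangle = c_1\delta_{ij}$ by~\eqref{eq:Q_sym} and~\eqref{on:g0}, and $\langle Y_k,\partial_{x_j}Q_k\rangle=0$ because $Y$ is radial and $\partial_{x_j}Q$ is odd in $x_j$ --- while every cross pairing between objects centered at $z_1$ and objects centered at $z_2$ is $O(e^{-\frac12|z|})$ by the exponential decay of $Q$, $\nabla Q$ and $Y$ (Lemma~\ref{le:L}) together with~\eqref{tech1}.

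Concretely, I would write the $2N+2$ equations as a block system: the $Y$-block decouples (to leading order) into $\langle W,Y_\ell\rangle = B_\ell + (\text{cross terms}) = \beta a_\ell$, and the $\nabla Q$-block into $\langle W,\partial_{x_i}Q_\ell\rangle = c_1 V_{\ell,i} + (\text{cross terms}) = 0$. Collecting all unknowns into a vector in $\RR^{2N+2}$, the full system has matrix $M = D + E$ where $D$ is the invertible diagonal block $\mathrm{diag}(1,1,c_1,\dots,c_1)$ (after the obvious normalization) and $\|E\|\lesssim e^{-\frac12|z|}$. For $|z|$ large $M$ is invertible with $M^{-1} = D^{-1} + O(e^{-\frac12|z|})$, so the coefficients exist, are unique, and depend smoothly on $(z_1,z_2,\ell_1,\ell_2)$ (smoothness of the entries of $M$ in these parameters is immediate since $Q_k, \partial_{x_j}Q_k, Y_k$ are smooth translates). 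Reading off the solution: $B_\ell = \beta a_\ell + O(e^{-\frac12|z|})|a|$ and $V_{\ell,i} = O(e^{-\frac12|z|})|a|$, which is exactly $\|B-\beta\Id\|\lesssim e^{-\frac12|z|}$ and $\|V_j\|\lesssim e^{-\frac12|z|}$.

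For the final assertion about $\vec W$, I would simply compute: with $\vec W(a_1,a_2) = (W,\nu^+ W)$ and $\vec Z_k^+ = \sigma_k(\zeta^+ Y(\cdot-z_k), Y(\cdot-z_k))$, the pairing is
\[
\langle \vec W,\vec Z_k^+\rangle = \zeta^+\langle W,Y_k\rangle + \nu^+\langle W,Y_k\rangle = (\zeta^+ + \nu^+)\langle W,Y_k\rangle.
\]
Since $\zeta^+ + \nu^+ = 2\sqrt{\alpha^2+\nu_0^2} = \beta^{-1}$ and $\langle W,Y_k\rangle = \beta a_k$, this gives $\langle \vec W,\vec Z_k^+\rangle = a_k$, as claimed. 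The main obstacle is purely bookkeeping: keeping track of the cross pairings and verifying they are all $O(e^{-\frac12|z|})$ --- in particular $\langle Y_1,\partial_{x_j}Q_2\rangle$, $\langle \partial_{x_i}Q_1,\partial_{x_j}Q_2\rangle$ and $\langle Y_1,Y_2\rangle$, each handled by Cauchy--Schwarz and the pointwise bound $|Q(y)|,|\nabla Q(y)|,|Y(y)|\lesssim e^{-|y|}$ combined with the convolution estimate behind~\eqref{tech1}. Once this is in place, invertibility of $M$ and the smoothness and size bounds on $M^{-1}$ are routine.
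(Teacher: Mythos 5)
Your proposal is correct and takes essentially the same approach as the paper: write $W$ as a linear combination of $Y_1,Y_2$ and the $\partial_{x_j}Q_k$, impose the $2N+2$ conditions as a linear system, observe that the self-pairings give an $O(1)$ invertible diagonal while the cross pairings between translates at $z_1$ and $z_2$ are $O(e^{-\frac12|z|})$, and invert for $|z|$ large. The final computation $\langle\vec W,\vec Z_k^+\rangle=(\zeta^++\nu^+)\langle W,Y_k\rangle=\beta^{-1}\cdot\beta a_k=a_k$ is also exactly the intended one.
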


\begin{proof} 
Define
\begin{equation*}
W(a_1,a_2)=\sum_{k=1,2} \bigg\{ b_k Y_k+ \sum_{j=1}^N v_{k,j} \partial_{x_j} Q_k\bigg\},
\end{equation*}
our goal is to solve for $b_k$, $v_{k,j}$ in function of $a_1, a_2$.
Using the relations $\langle Y_1,\partial_{x_j} Q_1\rangle =0$, $\langle \partial_{x_{j'}} Q_1,\partial_{x_j} Q_1\rangle =0$ for $j\neq j'$, and the estimate $\langle \partial_{x_{j'}} Q_2,\partial_{x_j} Q_1\rangle =O(e^{-\frac 12 |z|})$ for any $j, j'$
(see~\eqref{tech1}), we observe that the condition $\langle W(a_1,a_2),\partial_{x_j} Q_1\rangle=0$ is equivalent to
a linear relation between the coefficients in the definition of $W(a_1,a_2)$ of the form
\begin{equation*}
 \|\partial_{x_j} Q\|_{L^2}^2  v_{1,j} =
O(e^{-\frac 12 |z|}) b_2 + \sum_{j'=1}^N  O(e^{-\frac 12 |z|})v_{2,j'} .
\end{equation*}
Similarly, the condition $\langle W(a_1,a_2),\partial_{x_j} Q_2\rangle=0$ is equivalent to
\begin{equation*}
\|\partial_{x_j} Q\|_{L^2}^2 v_{2,j}  =
O(e^{-\frac 12 |z|}) b_1 + \sum_{j'=1}^N  O(e^{-\frac 12 |z|})v_{1,j'} .
\end{equation*}
Moreover, since $\langle Y,Y\rangle=1$ (see Lemma~\ref{le:L}) and $\langle Y_1,Y_2\rangle=O(e^{-\frac 12|z|})$, the conditions $\langle W(a_1,a_2),Y_k\rangle=\beta a_k$  write
\begin{align*}
b_1 &=  \beta a_1 + O\bigg\{\bigg(|b_2|+\sum_{j'=1}^N |v_{2,j'}|\bigg) e^{-\frac 12 |z|}\bigg\},\\
b_2 &=  \beta a_2 + O\bigg\{\bigg(|b_1|+\sum_{j'=1}^N |v_{1,j'}|\bigg) e^{-\frac 12 |z|}\bigg\}.
\end{align*}
The existence and desired properties of $b_k$ and $v_{k,j}$ for $k=1,2$ and $j=1,\ldots,N$ thus follow from inverting a linear system for $|z|$ large enough.
\end{proof}

\subsection{Construction of a family of 2-solitary waves}
\begin{proposition}\label{pr:exist}
Assume $\sigma=-1$. For  $\delta>0$ small enough, for any
\begin{equation}\label{eq:pr:10}\left\{\begin{aligned}
& (z_1(0),z_2(0))\in \RR^{2N}   \text{ with } |z_1(0)-z_2(0)|> 5 |\log \delta|, \\
&( \ell_1(0),\ell_2(0))\in \mathcal B_{\RR^{2N}}(\delta),\\
& \vvep(0)\in \mathcal B_\ENE(\delta) \text{ with } \eqref{ortho} \text{ and } \langle \vvep(0),\vec Z_k^+(0)\rangle=0\ \ \text{for}\ k=1,2,
\end{aligned}\right.\end{equation}
there exists $(a_1^+(0),a_2^+(0))\in  \bar{\mathcal B}_{\RR^2}(\delta^\frac 54)$ such that the solution $\vec{u}$ of~\eqref{nlkg} with the  initial data
\begin{equation*}
\vec u(0)= \vec Q_1(0) + \vec Q_2(0) + \vec W(a_1^+(0),a_2^+(0)) + \vvep(0)
\end{equation*}
is a $2$-solitary wave.
\end{proposition}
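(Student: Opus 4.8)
The plan is to keep all the data in~\eqref{eq:pr:10} fixed and to select the two unstable parameters $(a_1^+(0),a_2^+(0))$ by a topological shooting argument — the standard device for the ``exponential growth'' component isolated in~\S\ref{S:2.4}.

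For $a^+=(a_1^+,a_2^+)\in\bar{\mathcal B}_{\RR^2}(\delta^{5/4})$, let $\vec u_{a^+}$ be the solution of~\eqref{nlkg} with the stated initial data, so that its initial perturbation is $\vve(0)=\vec W(a^+)+\vvep(0)$. By Lemma~\ref{le:W} and~\eqref{eq:pr:10}, $\vve(0)$ satisfies the orthogonality conditions~\eqref{ortho} and $\langle\vve(0),\vec Z_k^+\rangle=a_k^+$; since moreover $\|\vve(0)\|_\ENE\lesssim\delta$ and $|z(0)|$ is large, Lemma~\ref{le:dec} produces a decomposition near $t=0$ with modulation parameters $(z_k(0),\ell_k(0))$ and, in the notation of~\S\ref{S:2.4}, $\mathcal N(0)\lesssim\delta$, $q(r(0))\lesssim\delta^4$, $b(0)=|a^+|^2\le\delta^{5/2}$. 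I would then run the bootstrap of the opposite-sign case of Proposition~\ref{le:BS}, but with the sharper threshold $b\le\delta^{5/2}$: as long as it holds and the decomposition persists, the estimates~\eqref{eq:new2} and~\eqref{eq:damped} give $\mathcal F,\BB\lesssim\delta^2$, hence $\mathcal N(t)\le C\delta$, while~\eqref{eq:dist} forces $q(r(t))$ to be decreasing with $q(r(t))\lesssim(1+t)^{-1}$; in particular $d(\vec u_{a^+}(t);\gamma)\lesssim\mathcal N(t)<\gamma$, so the decomposition extends as long as $b\le\delta^{5/2}$. Set
\[ T^*(a^+)=\sup\{T\ge0:\ b(t)\le\delta^{5/2}\ \text{on}\ [0,T]\}\in[0,+\infty]. \]

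If $T^*(a^+)=+\infty$ for some $a^+$, then the above estimates hold for all $t\ge 0$; combining them as in the opposite-sign case of Proposition~\ref{le:BS} and in the argument yielding~\eqref{eq:strong} (neither of which, once $b$ is already controlled by hypothesis, uses Corollary~\ref{cor:ener}), one obtains $\mathcal N(t)\lesssim t^{-1}$ and $q(r(t))\lesssim t^{-1}$; in particular $r(t)\to\infty$ and $\mathcal N(t)\to0$, so $\vec u_{a^+}$ satisfies~\eqref{alter} and is a $2$-solitary wave, as desired. It therefore remains to exclude the alternative that $T^*(a^+)<+\infty$ for \emph{all} $a^+\in\bar{\mathcal B}_{\RR^2}(\delta^{5/4})$. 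The key point is transversality at the exit time: there $b=\delta^{5/2}$ while, by~\eqref{eq:b}, $\dot b=2\nu^+\delta^{5/2}+O(\mathcal N^3+q(r)\mathcal N)=2\nu^+\delta^{5/2}+O(\delta^3)>0$ for $\delta$ small (here $5/2<3$ is used). Hence $b$ crosses $\delta^{5/2}$ strictly increasingly, $T^*$ is finite and continuous on $\bar{\mathcal B}_{\RR^2}(\delta^{5/4})$, and for $a^+$ on the boundary (where $b(0)=\delta^{5/2}$ and $\dot b(0)>0$) one has $T^*(a^+)=0$. Using continuous dependence of solutions of~\eqref{nlkg} in $\ENE$ (local theory of~\cite{BRS}) together with the Lipschitz dependence of the modulated decomposition from Lemma~\ref{le:dec}, the map
\[ \Phi:\bar{\mathcal B}_{\RR^2}(\delta^{5/4})\longrightarrow\mathcal S_{\RR^2}(\delta^{5/4}),\qquad a^+\longmapsto\big(a_1^+,a_2^+\big)\big(T^*(a^+)\big) \]
(where $(a_1^+,a_2^+)(t)$ is the pair of unstable coordinates of the decomposition of $\vec u_{a^+}$, of modulus $\sqrt{b(t)}$) is continuous and restricts to the identity on $\mathcal S_{\RR^2}(\delta^{5/4})$; a retraction of the disc onto its boundary circle does not exist, a contradiction. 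Thus some $a^+$ has $T^*(a^+)=+\infty$, and the corresponding $\vec u_{a^+}$ is the sought $2$-solitary wave.

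The main obstacle, as usual in such arguments, is making the topological step rigorous: proving that $\Phi$ is well defined and continuous. This rests on (i) continuous, in fact Lipschitz, dependence of the entire modulated decomposition — in particular of $t\mapsto b(t)$ — on the parameter $a^+(0)$, uniformly on the time intervals at play, which follows from~\cite{BRS} and the modulation construction of Lemma~\ref{le:dec}; and (ii) the transversality estimate $\dot b>0$ at $t=T^*$, with the exponents chosen so that $2\nu^+b$ dominates the $O(\mathcal N^3+q(r)\mathcal N)$ error, which guarantees that $T^*$ is finite and continuous and that $\Phi$ indeed lands in — and is the identity on — the sphere $\mathcal S_{\RR^2}(\delta^{5/4})$. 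All remaining ingredients are repetitions of estimates already established in Lemmas~\ref{le:dec} and~\ref{le:new} and Proposition~\ref{le:BS}.
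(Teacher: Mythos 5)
Your proposal is correct and follows essentially the same strategy as the paper's proof: decompose via Lemma~\ref{le:dec}, bootstrap with $b\le\delta^{5/2}$ as the exit condition (while $\mathcal N$ and $q(r)$ are strictly improved through the damped estimates and the $R^-$ estimate), verify outgoing transversality $\dot b>0$ at $b=\delta^{5/2}$, and conclude by the no-retraction theorem. Your additional observation that, once $\TS=\infty$, the argument behind \eqref{eq:strong} gives $\mathcal N(t)\lesssim t^{-1}$ (and hence $\vec u$ is indeed a $2$-solitary wave) is a welcome explicit supplement to the paper's more terse conclusion.
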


\begin{proof}
\emph{Decomposition.}
For any $t\geq 0$ such that the solution $\vec u(t)$ satisfies~\eqref{for:dec}, we decompose it according to Lemma~\ref{le:dec}.
Note that by the properties of the function $W$ in Lemma~\ref{le:W} and the orthogonality properties \eqref{ortho} of  $\vvep(0)$ assumed in~\eqref{eq:pr:10}, the initial data $\vec u(0)$ is modulated, in the sense that $ (z_1(0),z_2(0),\ell_1(0),\ell_2(0))$ and
\begin{equation*}
\vve(0)=\vec W(a_1^+(0),a_2^+(0)) + \vvep(0),
\end{equation*}
are the parameters of the decomposition of $\vec u(0)$.
 In particular, it holds from~\eqref{eq:pr:10}
\begin{equation}\label{at:0}
\mathcal N(0)\lesssim \delta,\quad q(r(0))\lesssim \delta^2.
\end{equation}
Moreover, by Lemma~\ref{le:W}, for $k=1,2$,
\begin{equation*}
\langle \vve(0),\vec Z_k^+(0)\rangle =  \langle \vec W(a_1^+(0),a_2^+(0)), Z_k^+ \rangle =  a_k^+(0),
\end{equation*}
which is consistent with the definition of $a_k^+$ in (v) of Lemma~\ref{le:dec}.

\bigskip

\emph{Bootstrap estimates.}
We introduce the following bootstrap estimates
\begin{equation}\label{BS:2}
\mathcal N\leq \delta^{\frac 34},\quad
q(r)\leq \delta^{\frac 32},\quad
b\leq  \delta^{\frac 52}.
\end{equation}
We set
\begin{equation*}
\TS=\sup\left\{ t\in [0,\infty)\hbox{ such that \eqref{BS:2} holds on $[0,t]$}\right\}\geq 0.
\end{equation*}

\bigskip

\emph{Estimates on the damped components.}
The estimate on $\mathcal N$ is strictly improved on $[0,\TS]$ as in the proof of Proposition~\ref{le:BS}.
In particular, $\mathcal N\lesssim \delta$ on $[0,\TS]$.

\bigskip

\emph{Estimate on the distance.}
Note that $r(t)\gtrsim \log \delta$ and thus, from \eqref{eq:Rp},
\begin{equation*}
\frac{\ud R^-}{\ud t} \geq \frac  {g_0}{\alpha}\left( 1-\frac{C}{|\log \delta|}\right) (1-C\delta^2)
\geq \frac {g_0}{2\alpha}.
\end{equation*}
Thus, by integration on $[0,t]$, for any $t\in [0,\TS]$, it holds
$R^-(t) \geq R^-(0) + \frac{g_0 t}{2\alpha}$.
Using \eqref{mai3} and next \eqref{at:0},  we obtain
\begin{equation*}
\frac 1{q(r(t))} \geq \left(\frac {1-C \delta^2}{q(r(0))}+ \frac{g_0 t}{2\alpha}\right)\big(1-C\delta^{2}\big) \gtrsim \delta^{-2} .
\end{equation*}
This strictly improves the estimate of $q(r)$ in \eqref{BS:1}.

\bigskip

\emph{Transversality condition.}
From \eqref{eq:b} and $\mathcal N\lesssim \delta$, we observe that for any time $t\in [0,\TS]$ where it holds $b(t)=\delta^{\frac 52}$, we have
\begin{equation*}
\frac \ud{\ud t} b(t) \geq 2\nu^+ b(t) - C \delta^{3}
\geq 2\nu^+  \delta^{\frac 52} - C \delta^{3} \geq \nu^+  \delta^{\frac 52} >0,
\end{equation*}
for $\delta>0$ small enough. This transversality condition is enough to justify the existence of at least a couple $(a_1^+(0),a_2^+(0))\in 
\bar{\mathcal B}_{\RR^2}( \delta^\frac 54)$ such that $\TS=\infty$.

Indeed, for the sake of contradiction assume that for all $(a_1^+(0),a_2^+(0))\in \bar{\mathcal B}_{\RR^2}( \delta^\frac 54)$, it holds $\TS<\infty$.
Then, a contradiction follows from the following observations (see for instance more details in \cite{CMMgn} or in~\cite[Section 3.1]{CMkg}).

\begin{description}
\item[Continuity of $\TS$] the above transversality condition proves that the map 
\begin{equation*}
(a_1^+(0),a_2^+(0))\in \bar{\mathcal B}_{\RR^2}(\delta^\frac 54)
\mapsto \TS\in [0,+\infty)
\end{equation*}
is continuous and that $\TS=0$ when $(a_1^+(0),a_2^+(0))\in \mathcal S_{\RR^2}(\delta^\frac 54)$.
\item[Construction of a retraction] As a consequence, the map
\begin{equation*}
(a_1^+(0),a_2^+(0))\in \bar{\mathcal B}_{\RR^2}( \delta^\frac 54)
\mapsto (a_1^+(\TS),a_2^+(\TS))\in \mathcal S_{\RR^2}( \delta^\frac 54)
\end{equation*}
is continuous and its restriction to the  sphere $\mathcal S_{\RR^2}( \delta^\frac 54)$ is the identity.
\end{description}
This is a contradiction with the no retraction theorem for continuous maps from the ball to the sphere.
\end{proof}

\begin{remark}
The use of initial data similar to the ones in Proposition~\ref{pr:exist} allows to correct a flaw in the articles \cite{CMMgn,CMkg,MMwave} 
dealing with the construction of multi-solitons in several contexts.
For example, in \cite{CMkg}, the initial data $U_{0}$ chosen page 18 is not exactly modulated, and this is why Lemma~6 provides the modulation keeping track of the free parameters necessary for the topological argument.
However, this modulation involves a translation parameter denoted by $\tilde{\mathbf{y}}$ in \cite{CMkg} (similar to $(z_{1},z_{2})$ in the present paper) on which the topological argument finally depends.
To  close the choice of the free parameters for  initial data as chosen in~\cite{CMkg}, an extra argument would be needed (like a fixed point result). It is simpler though to define an initial data already modulated as in Proposition~\ref{pr:exist}, or in \cite[Lemma 3.1]{JJ}.
\end{remark}

\subsection{Lipschitz estimate and uniqueness}

The heart of Theorem \ref{th:3} is the following proposition.

\begin{proposition}\label{pr:lip}
For $\delta>0$ small enough, let 
$(z_1(0),z_2(0),\ell_1(0),\ell_2(0),\vvep(0))$ and
$(\tilde z_1(0),\tilde z_2(0),\tilde \ell_1(0),\tilde \ell_2(0),\tvvep(0))$
be as in \eqref{eq:pr:10},
and let $(a_1^+(0),a_2^+(0))\in   {\mathcal B}_{\RR^2}(\delta)$, $(\tilde a_1^+(0),\tilde a_2^+(0))\in   {\mathcal B}_{\RR^2}(\delta)$
be such that the solutions $\vec u(t)$ and $\vec {\tilde u}(t)$ of \eqref{nlkg} 
with initial data
\begin{align*}
& \vec u(0)= \vec Q_1(0) + \vec Q_2(0) + \vec W(a_1^+(0),a_2^+(0)) + \vvep(0),\\
& \vec {\tilde u}(0)= \vec {\tilde Q}_1(0) + \vec {\tilde Q}_2(0) + \vec W(\tilde a_1^+(0),\tilde a_2^+(0)) + \tvvep(0),
\end{align*}
are $2$-solitary waves.
Then it holds 
\begin{align*}
&\sum_{k=1,2}|a_{k}^{+}(0)-{\tilde{a}}_{k}^{+}(0)| 
\\&\qquad \lesssim \delta^{\frac 14} \bigg(\|\vvep(0)-\tvvep(0)\|_\ENE+
\sum_{k=1,2} \left\{|z_{k}(0)-\tilde{z}_{k}(0) |+|\ell_{k}(0)-\tilde{\ell}_{k}(0)|\right\}
\bigg).
\end{align*}
\end{proposition}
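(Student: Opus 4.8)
The strategy is to study the difference of the two decomposed solutions on the shared interval $[0,\infty)$, treating the two initial data configurations via the decompositions of Lemma~\ref{le:dec}. Write $\vec u$, $\vec{\tilde u}$ with parameters $(z_k,\ell_k,\vve)$ and $(\tilde z_k,\tilde\ell_k,\tvve)$, and set $\check z_k = z_k-\tilde z_k$, $\check\ell_k = \ell_k-\tilde\ell_k$, $\check\vve = \vve-\tvve$, $\check a_k^\pm = a_k^\pm - \tilde a_k^\pm$. Since both solutions are $2$-solitary waves, Proposition~\ref{le:BS} (opposite sign case) applies to each, so $\mathcal N,\tilde{\mathcal N}\lesssim\delta$ and $q(r),q(\tilde r)\to 0$ along $t\to\infty$; more precisely $q(r(t))\sim \alpha/(g_0 t)$ by \eqref{eq:afaire2}, hence $\int_0^\infty q(r(t))\,\ud t$ diverges only logarithmically, and in fact $q(r(t))\lesssim t^{-1}$ on $[T,\infty)$ by \eqref{eq:strong}. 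The key point is that all ``difference'' quantities except the unstable components $\check a_k^+$ are \emph{damped}, so they are controlled by the initial difference $D_0 := \|\check\vvep(0)\|_\ENE+\sum_k(|\check z_k(0)|+|\check\ell_k(0)|)$ plus a contribution from $\check a_k^+$, while $\check a_k^+$ itself satisfies a forward-unstable ODE that forces it to be small by a backward-in-time argument.

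First I would derive the linearized system for the differences. Subtracting the equations \eqref{syst_e} for the two solutions and the modulation equations \eqref{eq:z}--\eqref{eq:a}, one gets, schematically,
\begin{equation*}
\Big|\frac{\ud}{\ud t}\check a_k^+ - \nu^+\check a_k^+\Big| \lesssim \delta\,\check{\mathcal N} + q(r)\,\check{\mathcal N} + (\text{lower order geometric errors}),
\end{equation*}
where $\check{\mathcal N} := \big(\|\check\vve\|_\ENE^2 + \sum_k|\check\ell_k|^2 + \sum_k|\check z_k|^2\big)^{1/2}$ (the $|\check z_k|^2$ must be added because the interaction terms $g(|z|)$, $q(r)$ depend on $z$ and differ by $O(q(r)|\check z|)$, using \eqref{on:gg} and \eqref{Qdec}). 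The factor $\delta$ on the right comes from the quadratic-in-$\vve$ nonlinearities, whose difference is bounded by $(\|\vve\|_\ENE+\|\tvve\|_\ENE)\|\check\vve\|_\ENE\lesssim\delta\,\check{\mathcal N}$. Next I would set up a difference energy: mimic the functional $\mathcal E$ of \eqref{def:E} but for $\check\vve$ (linearizing around $R$), obtaining $\check{\mathcal F} = \check{\mathcal E} + \sum_k|\check\ell_k|^2 + \tfrac1{2\mu}\sum_k(\check a_k^-)^2$ with coercivity $\mu\check{\mathcal N}^2 \lesssim \check{\mathcal F} + \sum_k(\check a_k^+)^2 + \sum_k|\check z_k|^2 \lesssim \check{\mathcal N}^2$ — note the subtlety that $\check z_k$ is \emph{not} damped, only $\check\ell_k$ is, and $\check z_k$ must be tracked separately via $|\dot{\check z}_k - \check\ell_k|\lesssim\delta\,\check{\mathcal N}$ (from the difference of \eqref{eq:z}), integrating to $|\check z_k(t)| \lesssim |\check z_k(0)| + \int_0^t|\check\ell_k| + \delta\int_0^t\check{\mathcal N}$. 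The damped estimate would read
\begin{equation*}
\frac{\ud}{\ud t}\check{\mathcal F} + 2\mu\check{\mathcal F} \lesssim \delta\,\check{\mathcal N}^2 + q(r)\check{\mathcal N}^2 + (\text{errors from }\check z_k).
\end{equation*}

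Then I would run the bootstrap/backward argument. Introduce $\check b := \sum_k(\check a_k^+)^2$ and, as in the proof of the Proposition preceding \eqref{eq:strong}, work with $\tilde{\check b} := \check b - \epsilon\check{\mathcal F}$ to get a clean forward-unstable differential inequality $\frac{\ud}{\ud t}\tilde{\check b} \geq \nu^+\tilde{\check b} - C(\text{forcing})$. Since both solutions are global $2$-solitary waves, $\check a_k^+(t)$ stays bounded (indeed $\lesssim\delta$) for all $t$, so integrating the unstable inequality forward from any $t$ to $\tau\to\infty$ and using that $e^{-\nu^+(\tau-t)}\tilde{\check b}(\tau)\to 0$ yields $\check b(t) \lesssim \int_t^\infty e^{-\nu^+(s-t)}(\delta\check{\mathcal N}(s)^2 + \ldots)\,\ud s$. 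Feeding this back into the damped inequality for $\check{\mathcal F}$ and the ODE for $\check z_k$, and closing via Gronwall on the quantity $\check{\mathcal N}(t)^2 + \sup_{[0,t]}$-type norms, one obtains $\check{\mathcal N}(t)^2 \lesssim D_0^2 + \delta\sup_{s\le t}\check{\mathcal N}(s)^2$ uniformly on $[0,\infty)$ (the $q(r)$ forcing being integrable-against-$e^{-\mu(t-s)}$ and producing only $O(D_0^2)$ after using $q(r(0))\lesssim\delta^2$ and the difference bound $q(r)-q(\tilde r)\lesssim q(r)|\check z|$). For $\delta$ small this absorbs, giving $\sup_{[0,\infty)}\check{\mathcal N}\lesssim D_0$, and then the backward estimate on $\check b$ at $t=0$ gives $\sum_k|\check a_k^+(0)|\lesssim \delta^{1/2}\sup\check{\mathcal N} + \ldots \lesssim \delta^{1/4}D_0$ after one more use of $\check{\mathcal N}\lesssim D_0$ and $\check a_k^+\lesssim\delta$ — the exponent $\delta^{1/4}$ being dictated by the interplay of the $\delta$-smallness of $\vve$ and the $\delta$-smallness of $a_k^+$ in the quadratic terms, exactly as in the statement.

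\textbf{Main obstacle.} The delicate point is that the translation differences $\check z_k$ are \emph{not} exponentially damped — unlike $\check\ell_k$, $\check a_k^-$, $\check\vve$ — yet they enter the forcing terms of every other difference equation (through $g(|z|)$, $q(r)$, $W$). One must show that $\check z_k$ grows at most polynomially (in fact stays bounded by $D_0$ up to a harmless logarithmic factor, since $\int q(r)\lesssim\log t$ and $\check\ell_k$ decays), and that the products $q(r)\cdot\check z_k$ appearing in the forcing are integrable against the damping semigroup $e^{-\mu(t-s)}$ — this requires the sharp decay $q(r(t))\lesssim t^{-1}$ from \eqref{eq:strong} and care in bookkeeping. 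A secondary difficulty is bounding the difference of the nonlinear interaction terms $\langle G,\nabla Q_k\rangle$, $\langle G, Y_k\rangle$: one needs a Lipschitz-in-$z$ version of Lemma~\ref{le:nonlin}, i.e. $|g(|z|)-g(|\tilde z|)|\lesssim q(r)|\check z|$ and similarly for the $L^2$ norms, which follows by differentiating the integral representations but must be stated explicitly.
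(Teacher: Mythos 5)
Your outline correctly identifies the right objects (the difference decomposition with $\check z_k,\check\ell_k,\check\e$, the crucial decision to include $|\check z_k|^2$ inside $\check{\mathcal N}$, the Lipschitz-in-$z$ estimates on the interactions, the damped energy $\check{\mathcal F}$, and the use of a forward-unstable ODE for $\check a_k^+$ integrated backward). These are all present in the paper's proof. However, the overall structure of your argument (direct Gronwall plus backward integration) has a genuine gap precisely at the point you flag as the ``main obstacle,'' and the fix you propose does not work.

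The issue is that $\check z_k$ is not damped at all. From the difference modulation estimate $|\dot{\check z}_k - \check\ell_k| \lesssim \check{\mathcal N}(\mathcal N+\tilde{\mathcal N})$ together with $\check\ell_k \lesssim \sqrt{\check{\mathcal F}}$, the best forward bound you can hope for (even granting $\mathcal N,\tilde{\mathcal N}\lesssim t^{-1}$ from \eqref{eq:strong}) is $\check{\mathcal F}(t)\lesssim e^{-2\mu t}D_0^2 + M^2 t^{-1}$ where $M=\sup\check{\mathcal N}$, which makes $\check\ell_k \lesssim M t^{-1/2}$ and hence $|\check z_k(t)|\lesssim D_0 + M\sqrt{t}$. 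This is \emph{polynomial} growth, not the claimed ``bounded by $D_0$ up to a harmless logarithmic factor''; it does not close your self-improving inequality $\check{\mathcal N}^2\lesssim D_0^2 + \delta\sup\check{\mathcal N}^2$, and it also makes the backward integral $\int_t^\infty e^{-\nu^+(s-t)}\delta\check{\mathcal N}(s)^2\,\ud s$ potentially uncontrolled since $\check{\mathcal N}$ grows. The $\delta^{1/4}$ exponent you eventually quote is reverse-engineered from the statement rather than derived.

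The paper sidesteps this obstruction by arguing by \emph{contradiction} rather than by a direct Gronwall. After disposing of the trivial case $\sum_k|\check z_k(0)|>\delta^{3/4}$, it supposes $\delta^{1/2}\check{\mathcal N}^2(0)<\check b(0)$ and runs the bootstrap hypothesis $\delta^{3/4}\check{\mathcal N}^2<\check b$. Under this hypothesis, the ODE \eqref{cz-a} yields $\frac{\ud}{\ud t}\check b\ge\nu^+\check b$, so $\check b$ is monotone increasing; this is the crucial structural fact. It allows the decisive trick
\begin{equation*}
\Big|\frac{\ud}{\ud t}|\check y_k|^2\Big| \lesssim \delta\,\check{\mathcal N}^2 \lesssim \delta^{1/4}\check b \lesssim \delta^{1/4}\frac{\ud}{\ud t}\check b,
\end{equation*}
which integrates to $|\check y_k(t)|^2\lesssim|\check y_k(0)|^2+\delta^{1/4}\check b(t)$ without any loss in time. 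In other words, the exponential growth of $\check b$ supplied by the contradiction hypothesis dominates the non-damped forcing of $\check z_k$, and the bootstrap closes, forcing $\TS=\infty$. But then $\check b$ grows exponentially, contradicting the a priori bound $\check b\lesssim\delta^2$ (each $a_k^+$ is $O(\delta)$). Hence $\check b(0)\le\delta^{1/2}\check{\mathcal N}^2(0)$, which combined with the relation between $\check a_k^+(0)$ and $a_k^+(0)-\tilde a_k^+(0)$ gives the $\delta^{1/4}$ Lipschitz estimate. This contradiction-and-monotonicity device has no analogue in your direct scheme and cannot be replaced by a sharper decay of $q(r)$; it is the missing idea.
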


\begin{proof}
We will split the proof in several steps.
For $\delta>0$ small enough, from Proposition~\ref{le:BS} and the assumption on their initial data $\vec u(0)$, $\vec {\tilde u}(0)$,  the $2$-solitary waves $\vec u$ and $\vec{\tilde u}$ decompose for any $t\geq 0$ as in Lemma~\ref{le:dec}
$\vec{u}=\vec{Q}_1 +\vec{Q}_2 +\vve$, $\vec{\tilde{u}}=\vec{\tilde{Q}}_1 +\vec{\tilde{Q}}_2 +\vec{\tilde\e}$
and the parameters of their decompositions $(z_k,\ell_k, \mathcal N)_{k=1,2}$ and $(\tilde z_k,\tilde \ell_k, \tilde{\mathcal N})_{k=1,2}$ respectively satisfy~\eqref{eq:afaire2} for all $t\geq 0$.
Denote
\begin{gather}\label{eq:ch}
\check{z}_{k}=z_{k}-\tilde{z}_{k},\quad \cbk=\ell_{k}-{\tilde\ell}_{k},\quad \check{Q}_{k}=Q_{k}-{\tilde Q}_{k},\quad
\cce=\e-\tilde\e,\quad \check{\eta}=\eta-\tilde\eta,\\
\check{\mathcal N}=\bigg[\|\vec{\cv}\|_\ENE^2+\sum_{k=1,2} |\check{z}_{k} |^2+\sum_{k=1,2} |\check{\ell}_{k} |^2\bigg]^{\frac 12}.
\end{gather}
(Notice the extra term in $\check{\mathcal N}$).

If $\sum_{k=1,2} |\check z_k(0)|> \delta^\frac 34$, since $(a_1^+(0),a_2^+(0))$, $(\tilde a_1^+(0),\tilde a_2^+(0))\in  \bar{\mathcal B}_{\RR^2}(\delta)$, the desired estimate is satisfied. Thus, we assume now 
 $\sum_{k=1,2} |\check z_k(0)|\leq \delta^\frac 34$, and we work only for time $t$ such that $\sum_{k=1,2} |\check z_k(t)|$
 is small (see~\eqref{BS:lip}).

\bigskip

\emph{Equation of $\vec \cv$.}
By direct computation from \eqref{syst_e} for $\vec\e$ and $\vec{\tilde \e}$, we have
\begin{equation}\label{eq:ce}
\left\{\begin{aligned}
\partial_t \cce&=\check{\eta}
+\md_{\cv,{1}}+\md_{\cv,{2}}\\
\partial_t \check{\eta}&=\Delta \cce-\cce+f'(R)\cce-2\alpha\check\eta+\md_{\ce,{1}}+\md_{\ce,{2}}+\check{G}
\end{aligned}\right.\end{equation}
where
\begin{align*}
 \md_{\cv,{1}}&=\sum_{k=1,2}\left(\dot{\check{z}}_{k}-{\check{\ell}_{k}}\right)\cdot\nabla Q_{k},\quad \md_{\cv,{2}}=\sum_{k=1,2}\left(\dot{\tilde{z}}_{k}-{\tilde\ell_{k}}\right)\cdot\nabla \check{Q}_{k},\\
 \md_{\ce,{1}}&=\sum_{k=1,2}\left(\dot{\check{\ell}}_{k}+2\alpha {\check{\ell}}_{k}\right)\cdot\nabla Q_{k}\\
&\quad -\sum_{k=1,2}({\check{\ell}}_{k}\cdot\nabla)(\dot{ {z}}_{k}\cdot\nabla) Q_{k}
-\sum_{k=1,2}({\tilde{\ell}}_{k}\cdot\nabla)(\dot{\check{z}}_{k}\cdot\nabla) Q_{k},\\
  \md_{\ce,{2}}&=\sum_{k=1,2}\left(\dot{\tilde{\ell}}_{k}+2\alpha {\tilde\ell}_{k}\right)\cdot\nabla \check{Q}_{k}-\sum_{k=1,2}({\tilde\ell}_{k}\cdot\nabla)(\dot{\tilde z}_{k}\cdot\nabla)\check{Q}_{k}
\end{align*}
and
\begin{equation*}
\check{G}=\big[f(R+\e)-f(R)-f'(R)\e\big]
-\big[f({\tilde{R}}+\tilde{\e})-f({\tilde{R}})-f'(R)\tilde{\e}\big]+G-\tilde G.
\end{equation*}

{\bf Step 1.}  We claim the following estimate on the nonlinear term $\check G$.

\begin{lemma}
\begin{equation}\label{on:checkG}
\|\check G\|_{L^2}\lesssim \check {\mathcal N} (\mathcal N+\tilde {\mathcal N}+e^{-\frac 12 |z|}).
\end{equation}
\end{lemma}

\begin{proof}
We will in fact derive pointwise bounds. We decompose $\check{G}=\sum_{j=1}^5\check{G}_j$ where
\begin{align*}
&\check{G}_1 =[f'(R)-f'(\tilde R)] \tilde \e,\\
&\check{G}_2 = [f(R+\e)-f(R)-f'(R)\e]- [f(R+\tilde{\e})-f(R)-f'(R)\tilde{\e}],\\
&\check{G}_3 = [f(R+\tilde\e)-f(R)-f'(R)\tilde\e]- [f(\tilde{R}+\tilde{\e})-f(\tilde{R})-f'(\tilde{R})\tilde{\e}],\\
&\check{G}_4 = [f(Q_1 +Q_2 )-f(Q_2 )]-[f(Q_1 +\tilde{Q}_2 )-f(\tilde{Q}_2 )],\\
&\check{G}_5 = [f(Q_1 +\tilde{Q}_2 )-f(Q_1 )]-[f(\tilde{Q}_1 +\tilde{Q}_2 )-f(\tilde{Q}_1)].
\end{align*}
For $\check{G}_1$, using $|f'(R)-f'(\tilde{R}) |\lesssim  \left(|Q_1 |^{p-1}+|Q_2|^{p-1}\right)\sum_{k=1,2}|\czk|$, we have
\begin{equation*}
|\check{G}_1|\lesssim |\tilde \e| \left(|Q_1 |^{p-1}+|Q_2|^{p-1}\right)\sum_{k=1,2}|\czk|.
\end{equation*}
For $\check G_2$, by Taylor formular, we have
\begin{align*}
\check G_2 & = f(R+\tilde \e + \check \e) - f(R+\tilde{\e}) - f'(R) \check{\e} \\
& = \check{\e}  \int_0^1 f'(R+ \tilde \e + \theta \cv) - f'(R) \ud \theta  
= \check{\e} \int_0^1 ( \tilde \e + \theta  \cv) \int_0^1  f''(R+ \rho (\tilde{\e}+\theta \check{ \e})) \ud \rho \ud \theta \\
& = \check{\e}   \int_0^1 \int_0^1 ( \theta \e + (1-\theta) \tilde \e)  f''(R+ \rho( \tilde{\e}+\theta \check{ \e})) \ud \rho \ud \theta.
\end{align*}
As $|f''(x+y)| = p(p-1) |x+y|^{p-2} \lesssim |x|^{p-2} + |y|^{p-2}$, we get
\[ |\check G_2| \lesssim |\check{\e}| (|\e| + |\tilde \e |) \left( |R|^{p-2} + |\tilde \e|^{p-2}+|\e|^{p-2} \right). \]

For $\check G_3$: by Taylor formula,
\begin{align*}
\MoveEqLeft \big|[f(R+\tilde\e)-f(R)]-[f(\tilde{R}+\tilde\e)-f(\tilde{R})]\big| =\Big|\tilde\e\int_{0}^{1}[f'(R+\theta\tilde\e)-f' (\tilde{R}+\theta\tilde\e )]\ud\theta\Big| \\
& \lesssim (|\check z_{1}|+|\check z_{2}|)|\tilde\e|\big(|R|^{p-2}+|\tilde{R}|^{p-2}+|\tilde\e|^{p-2}\big),
\end{align*}
and
\begin{equation*}
\big| [f'(R)-f'(\tilde{R})]\tilde\e\big|\lesssim(|\check z_{1}|+|\check z_{2}|)|\tilde\e|\big(|R|^{p-2}+|\tilde{R}|^{p-2}\big).
\end{equation*}
It follows that
\begin{equation*}
|\check G_3| \lesssim (|\check z_{1}|+|\check z_{2}|)|\tilde\e|\big(|R|^{p-2}+|\tilde{R}|^{p-2}+|\tilde\e|^{p-2}\big).
\end{equation*}

For $\check G_4$: Taylor formula gives similarly
\begin{align*}
 |\check{G}_4  |&=\left|Q_1 \int_{0}^{1}\big[f' (Q_2 +\theta Q_1  )-f' (\tilde{Q}_2 +\theta Q_1  )\big]\ud\theta\right|\\
&\lesssim |Q_1 | |Q_2 -\tilde{Q}_2  |\left(|Q_1 |^{p-2}+|Q_2 |^{p-2}\right)\lesssim |\check{z}_2 |\left(|Q_1 |^{p-1}|Q_2 |+|Q_1 ||Q_2 |^{p-1}\right).
\end{align*}

For $\check G_5$, a similar argument yields
\[ |\check{G}_5|\lesssim |\check{z}_1 | (|Q_1 |^{p-1}|Q_2 |+|Q_1 ||Q_2 |^{p-1} ). \]

Estimate \eqref{on:checkG} then follows from \eqref{tech2} and Sobolev embeddings.
\end{proof}

{\bf Step 2.}  Equations for the parameters.

\begin{lemma} \label{le:chP} 
{}\
\begin{enumerate}
\item \emph{Equations of the geometric parameters.} For $k=1,2$,
\begin{align}
 |\dot{\check{z}}_{k}-{\check{\ell}_{k}} |  
&\lesssim  \check{\mathcal N}  (\mathcal N+\tilde {\mathcal N} ) ,\label{eq:cz}\\
 |\dot {\check \ell}_{k} +2\alpha \check\ell_{k}| 
&\lesssim  \check{\mathcal N}  (\mathcal N+\tilde {\mathcal N}+e^{-\frac 12 |z|}).
\label{eq:cl}
\end{align}
\item \emph{Equations of the exponential directions.}
Let $\check{a}_{k}^{\pm} = \langle\vec{\cce},\vec{Z}^{\pm}_{k}\rangle$. Then,
\begin{align}\label{cz-a}
\bigg| \frac{\ud }{\ud t} \check{a}_{k}^{\pm} - \nu^\pm \check{a}_{k}^{\pm}\bigg|
\lesssim \check{\mathcal N}  (\mathcal N+\tilde {\mathcal N}+e^{-\frac 12 |z|}) .
\end{align}
\end{enumerate}
\end{lemma}

\begin{proof} (i)
From the orthogonality conditions in~\eqref{ortho}, we derive the following relations: for all $k=1,2$ and $j=1,\ldots,N$,
\begin{equation}\label{c:ortho}
\langle\cce, \partial_{x_{j}} Q_{k}\rangle+\langle\tilde \e, \partial_{x_{j}} \check{Q}_{k}\rangle= \langle\check{\eta}, \partial_{x_{j}} Q_{k}\rangle
+\langle\tilde\eta, \partial_{x_{j}} \check{Q}_{k}\rangle=0.
\end{equation}
We derive  the equations of $\czk$ and $\cbk$ from these relations.

First, from~\eqref{c:ortho} for $\check \e$ with $k=1$ and $j=1,\ldots,N$,
\begin{align*}
0&= \frac \ud{\ud t}\left(\langle\cce, \partial_{x_{j}} Q_1 \rangle+\langle\tilde \e, \partial_{x_{j}} \check{Q}_1 \rangle\right)
\\& =\langle\partial_{t }\cv,\partial_{x_{j}} Q_1 \rangle
+\langle\cv,\partial_{t} \partial_{x_{j}} Q_1\rangle+\langle\partial_{t}\tilde\e,\partial_{x_{j}} \check{Q}_1 \rangle+\langle\tilde\e,\partial_{t} \partial_{x_{j}} \check{Q}_1\rangle.
\end{align*}
Thus, using~\eqref{eq:ce} for $\partial_{t} \cv$ and \eqref{syst_e} for $\partial_{t}\tilde \e$,
\begin{align*}
0&=\langle\check{\eta}, \partial_{x_{j}} Q_1\rangle+(\dot{\check{z}}_{1,j} -{\check{\ell}_{1,j} })
\|\partial_{x_{j}} Q_1\|_{L^2}^2+\langle (\dot{\check{z}}_2 -{\check{\ell}_2 })
\cdot \nabla Q_2 ,\partial_{x_{j}} Q_{1}\rangle\\
&\quad +\sum_{k=1,2}\langle (\dot{\tilde{z}}_{k}-{\tilde\ell_{k}})\cdot \nabla \check{Q}_{k},\partial_{x_{j}} Q_1\rangle
-\langle\cce,\left(\dot{z}_1 \cdot\nabla\right)\partial_{x_{j}} Q_1 \rangle\\
&\quad+\langle\tilde\eta,\partial_{x_{j}}  \check{Q}_1 \rangle
+\sum_{k=1,2}\langle  (\dot{\tilde z}_{k}-{\tilde\ell_{k}} )\cdot  \nabla \tilde{Q}_k ,\partial_{x_{j}} \check Q_1\rangle\\
&\quad-\langle \tilde\e, (\dot{\check{z}}_1 \cdot\nabla )\partial_{x_{j}}   Q_1\rangle
-\langle\tilde\e, (\dot{\tilde{z}}_1 \cdot\nabla )\partial_{x_{j}}   \check{Q}_1 \rangle.
\end{align*}
Using \eqref{tech1}, \eqref{eq:z}, \eqref{c:ortho} and the estimates
\begin{equation*}
|\langle \partial_{x_{j'}} \check Q_1,\partial_{x_{j}} Q_{1}\rangle|+
|\langle \partial_{x_{j'}} \check Q_2,\partial_{x_{j}} Q_{1}\rangle|+
|\langle \partial_{x_{j'}}  Q_2,\partial_{x_{j}} \check Q_{1}\rangle|\lesssim |\check z_{1}| +|\check z_{2}|,
\end{equation*}
it follows that
\begin{align*}
 |\dot{\check{z}}_{1,j} -{\check{\ell}_{1,j} } |
&\lesssim  e^{-\frac 12 |z|}|\dot{\check{z}}_2 -{\check{\ell}_2 } |
+(|\check z_{1}|+|\check z_2|)\tilde {\mathcal N}^2 + (|\ell_{1}|+\mathcal N^2) \check{\mathcal N}\\
&\quad  + \tilde {\mathcal N} (|\check \ell_{1}|+ |\dot{\check{z}}_{1} -{\check{\ell}_{1} } |) 
+\tilde {\mathcal N} |\check z_{1}| (\mathcal N^2+|\ell_{1}|).
\end{align*}
By symmetry, an analogous estimate holds for $ |\dot{\check{z}}_{2,j} -{\check{\ell}_{2,j} } |$. Summing up both gives \eqref{eq:cz}.

Second, using~\eqref{c:ortho} for $\check \eta$ with $k=1$ and $j=1,\ldots,N$, we have
\begin{align*}
0&= \frac \ud{\ud t}\left(\langle\check{\eta}, \partial_{x_{j}} Q_1 \rangle+\langle\tilde \eta, \partial_{x_{j}} \check{Q}_1 \rangle\right)
\\& =\langle\partial_{t }\check{\eta},\partial_{x_{j}} Q_1 \rangle
+\langle\check{\eta},\partial_{t} \partial_{x_{j}} Q_1\rangle+\langle\partial_{t}\tilde\eta,\partial_{x_{j}} \check{Q}_1 \rangle
+\langle\tilde\eta,\partial_{t} \partial_{x_{j}} \check{Q}_1\rangle.
\end{align*}
Using~\eqref{eq:ce}, one has
\begin{align*}
&\langle\partial_{t}\check{\eta},\partial_{x_{j}} Q_1\rangle
\\&\quad =\langle\Delta \cce-\cce+f'(Q_1 )\cce,\partial_{x_{j}} Q_1\rangle
-2\alpha\langle\check{\eta},\partial_{x_{j}} Q_1 \rangle\\
&\qquad+ \langle [f'(R)-f'(Q_1)]\cce,\partial_{x_{j}} Q_1 \rangle
+\langle\check{G},\partial_{x_{j}} Q_1  \rangle\\
&\qquad+ (\dot{\check{\ell}}_{1,j}+2\alpha {\check{\ell}}_{1,j}  ) \|\partial_{x_{j}} Q_1  \|_{L^{2}}^{2}
+ \langle(\dot{\check{\ell}}_2+2\alpha {\check{\ell}}_2  )\cdot \nabla Q_2 ,\partial_{x_{j}} Q_1 \rangle\\
&\qquad-\sum_{k=1,2}\langle({\check{\ell}}_{k}\cdot\nabla)(\dot{{z}}_{k}\cdot\nabla) Q_{k},\partial_{x_{j}} Q_1 \rangle
-\sum_{k=1,2}\langle({\tilde{\ell}}_{k}\cdot\nabla)(\dot{\check{z}}_{k}\cdot\nabla) Q_{k},\partial_{x_{j}} Q_1 \rangle\\
&\qquad
+\sum_{k=1,2}\langle (\dot{\tilde{\ell}}_{k}+2\alpha {\tilde\ell}_{k} )\cdot\nabla \check{Q}_{k},\partial_{x_{j}}  Q_1\rangle
-\sum_{k=1,2}\langle({\tilde\ell}_{k}\cdot\nabla)(\dot{\tilde z}_{k}\cdot\nabla)\check{Q}_{k},\partial_{x_{j}}  Q_1 \rangle.
\end{align*}
Next,
\begin{align*}
\langle\check{\eta},\partial_{t} \partial_{x_{j}} Q_1\rangle &=
-\langle\check{\eta},(\dot z_{1}\cdot\nabla) \partial_{x_{j}} Q_1\rangle\\
&=-\langle\check{\eta},((\dot z_{1}-\ell_{1})\cdot\nabla) \partial_{x_{j}} Q_1\rangle
-\langle\check{\eta},(\ell_{1}\cdot\nabla) \partial_{x_{j}} Q_1\rangle
=O( \check {\mathcal N} \mathcal N) .
\end{align*}
Then, using~\eqref{syst_e},
\begin{align*}
\langle \partial_{t}\tilde\eta, \partial_{x_{j}}  \check{Q}_1\rangle&
=\langle\Delta \tilde{\e}-\tilde{\e}+f'(\tilde{Q}_1 )\tilde{\e},\partial_{x_{j}}  \check{Q}_1 \rangle
-2\alpha\langle\tilde{\eta},\partial_{x_{j}}  \check{Q}_1\rangle\\
&\quad +\langle f({\tilde{R}}+\tilde\e)-f({\tilde{R}})-f'(\tilde Q_{1})\tilde \e,\partial_{x_{j}}  \check{Q}_1 \rangle\\
&\quad 
+\sum_{k=1,2} \langle (\dot{\tilde \ell}_{k}+2\alpha{\tilde\ell}_{k} )\cdot\nabla \tilde Q_{k},\partial_{x_{j}} \check{Q}_1 \rangle
\\
&\quad 
-\sum_{k=1,2}\langle({\tilde\ell}_{k}\cdot\nabla)(\dot{\tilde z}_{k}\cdot\nabla) \tilde Q_{k},\partial_{x_{j}} \check{Q}_1 \rangle+\langle\tilde{G},\partial_{x_{j}}  \check{Q}_1\rangle.
\end{align*}
Last,
\begin{align*}
\langle\tilde\eta,\partial_{t} \partial_{x_{j}} \check{Q}_1\rangle&=
-\langle \tilde\eta, (\dot{\check{z}}_1 \cdot\nabla )\partial_{x_{j}}   Q_1\rangle
-\langle\tilde\eta, (\dot{\tilde{z}}_1 \cdot\nabla )\partial_{x_{j}}   \check{Q}_1 \rangle\\
&=-\langle \tilde\eta, (\dot{\check{z}}_1-\check\ell_{1}) \cdot\nabla )\partial_{x_{j}}   Q_1\rangle
-\langle \tilde\eta,  \check \ell_{1}\cdot\nabla )\partial_{x_{j}}   Q_1\rangle\\
&\quad -\langle\tilde\eta, ((\dot{\tilde{z}}_1-\tilde \ell_{1}) \cdot\nabla )\partial_{x_{j}}   \check{Q}_1 \rangle
-\langle \check\eta,  \tilde \ell_{1}\cdot\nabla )\partial_{x_{j}}   Q_1\rangle\\
&=O(\tilde {\mathcal N} |\dot{\check{z}}_1-\check\ell_{1}|)+O( \check {\mathcal N} \tilde {\mathcal N}).
\end{align*}
By~\eqref{on:checkG}, we have
$ |\langle\check{G},\nabla Q_{k}\rangle |\lesssim  \check{\mathcal N}\big( \mathcal N+\tilde{\mathcal N}+e^{-\frac 12 |z|}\big).$

Combining these estimates with $\Delta \partial_{x_{j}} Q_1 - \partial_{x_{j}} Q_1+f'(Q_1 ) \partial_{x_{j}} Q_1=0$, \eqref{c:ortho} for $\check \eta$, we obtain
\begin{equation*}
|\dot{\check\ell}_{1,j}+2\alpha \check \ell_{1,j}|
\lesssim 
\bigg(\check{\mathcal N}+ \sum_{k=1,2}\left\{ |\dot {\check z}_{k}-\check\ell_{k}|+|\dot {\check\ell}_{k}+2\alpha \check\ell_{k}|\right\}\bigg)
\big(\tilde {\mathcal N}+\mathcal N + e^{-\frac 12|z|}\big).
\end{equation*}
We argue similarly to obtain the same bound on $|\dot{\check\ell}_{2,j}+2\alpha \check \ell_{2,j}|$. Summing up and using  \eqref{eq:cz}, we obtain \eqref{eq:cl}.

(ii) The proof of~\eqref{cz-a} is similar. See also the proof of (v) in Lemma~\ref{le:dec}.
\end{proof}

{\bf Step 3.} Energy estimates.
For $\mu>0$ small to be chosen, denote $\rho=2\alpha-\mu$, and consider the following energy functional
\begin{equation}
\check{\mathcal{E}}=\int  \big\{|\nabla \cce|^{2}+(1-\rho\mu)\cce^{2}+(\check{\eta}+\mu\cce)^{2}-f'(R)\cce^{2}\big\}.
\end{equation}

\begin{lemma}\label{le:chE}
There exists $\mu>0$ such that the following hold.
\begin{enumerate}
\item \emph{Coercivity and bound.}
\begin{equation}\label{coer:ce}
2\mu \|\vec \cv\, \|_{\ENE}^2- \frac 1{2\mu}   \bigg[\tilde{\mathcal N}^2 \check{\mathcal N}^2
+\sum_{k=1,2}\left((\check a_{k}^+)^2+(\check a_{k}^-)^2\right)\bigg]\leq
\check{\mathcal{E}} \leq \frac 1\mu \|\vec \cv\, \|_{\ENE}^2.
\end{equation}
\item \emph{Time variation.}
\begin{equation*}
\frac \ud{\ud t}\check{\mathcal{E}}
\leq -2\mu \check{\mathcal{E}}+ \frac 1\mu \check{\mathcal N}^2 (\mathcal N + \tilde{\mathcal N} + e^{-\frac 12 |z|}).
\end{equation*}
\end{enumerate}
\end{lemma}
\begin{proof}
(i) The upper bound in~\eqref{coer:ce} is clear.
Next, adapting the proof of (i) of Lemma~\ref{le:ener}, one checks
\begin{equation*}
\check{\mathcal{E}}\ge 2\mu \|\vec{\cce}\|_\ENE^{2}
-\frac{1}{2\mu}\bigg[ \sum_{k,j} |\langle\cce,\partial_{x_j} Q_k \rangle|^{2} +\sum_{k=1,2}\left((\check a_{k}^+)^2+(\check a_{k}^-)^2\right)\bigg].\\
\end{equation*}
The  estimate 
$|\langle \check{\e},\partial_{x_{j}}Q_{k}\rangle|=|\langle\tilde \e, \partial_{x_{j}} \check{Q}_{k}\rangle|
\lesssim \tilde{\mathcal N} \check{\mathcal N}$  from \eqref{c:ortho}
then implies the lower bound in \eqref{coer:ce}.

(ii) We follow the computation of the proof of (ii) of Lemma~\ref{le:ener}. First, 
\begin{align*}
\frac 12\frac \ud{\ud t}\check{\mathcal{E}}&=\int \partial_{t}\cv  \left[-\Delta \cv + (1-\rho\mu)\cv 
-f'(R)\cv\right]+(\partial_{t}\ce +\mu\partial_{t}\cv )\ (\ce+\mu\cv ) \\
&\quad +\int \sum_{k=1,2} (\dot{z}_{k}\cdot\nabla Q_{k})f''(R)\cv^{2}
=\mathbf{g_{1}}+\mathbf{g_{2}}.
\end{align*}
Second, using~\eqref{eq:ce} and integration by parts,
\begin{align*}
\mathbf{g_{1}}&=-\mu\int \left[|\nabla \cv|^{2}+(1-\rho\mu)\cv^{2}-f'(R)\cv^{2}\right]-\rho\int (\ce +\mu\cv)^{2}\\
&\quad+\sum_{k=1,2}\int \cv\left[-\Delta\md_{\cv,{k}}+(1-\rho\mu)\md_{\cv,{k}}-f'(R)\md_{\cv,{k}}\right]\\
&\quad+\sum_{k=1,2}\int (\ce+\mu\cv)\left[\md_{\ce,k}+\mu\md_{\cv,k}\right]
+\int (\ce+\mu\cv) \check{G}.
\end{align*}
The first line in the expression of $\mathbf{g_{1}}$ above is  less than $-\mu\check{ \mathcal E}$
(taking $\mu\leq \rho$).
Next, using~\eqref{eq:z}, \eqref{eq:l}, \eqref{on:checkG}, \eqref{eq:cz} and \eqref{eq:cl},
we check that  the remaining terms in $\mathbf{g_{1}}$ and $\mathbf{g_{2}}$ are estimated
by $\check{\mathcal N}^2(\mathcal N+\tilde{\mathcal N}+e^{-\frac 12 |z|})$.
\end{proof}

Define
\begin{equation}
\check y_{k} = \check z_{k} + \frac{\check\ell_{k}}{2\alpha},\quad
\check{\mathcal F} = \check{\mathcal E} + \sum_{k=1,2} |\check{\ell}_k|^2 + \frac 1{2 \mu} \sum_{k=1,2} (\check{a}_k^-)^2,\quad
\check b =  \sum_{k=1,2} (\check a_{k}^+)^2.
\end{equation}

\begin{lemma}{}\
For $\mu>0$ defined in Lemma~\ref{le:chE}, it holds.
\begin{enumerate}
\item\emph{Comparison.}
\begin{equation}\label{eq:comp}
\mu \check{\mathcal N}^2\leq \mu \|\vec{\check\e}\,\|_{\ENE}+\sum_{k=1,2} |\check{\ell}_k|^2 + \sum_{k=1,2} |\check z_{k}|^2
\leq \check{\mathcal F}+\frac {\check b}{2\mu}+\sum_{k=1,2} |\check z_{k}|^2\lesssim  \check{\mathcal N}^2. 
\end{equation}
\item\emph{Positions.}
\begin{equation}\label{eq:cy}
\bigg| \frac{\ud }{\ud t}{\check y}_{k}\bigg|\lesssim \check{\mathcal N} (\mathcal N+\tilde{\mathcal N}+e^{-\frac 12 |z|})
\lesssim \delta \check{\mathcal N}.
\end{equation}
\item\emph{Damped components.}
\begin{align}
\frac{\ud }{\ud t} \check{\mathcal F}
+2\mu \check{\mathcal F} &\lesssim \check{\mathcal N}^2 (\mathcal N+\tilde{\mathcal N}+e^{-\frac 12 |z|})
\lesssim \delta \check{\mathcal N}^2.\label{on:cF}
\end{align}
\end{enumerate}
\end{lemma}
\begin{proof} (i) follows from~\eqref{coer:ce} (see also the proof of~\eqref{eq:new2} in Lemma \ref{le:new}, (i)).

(ii) Estimate~\eqref{eq:cy} follows  from~\eqref{eq:cz} and~\eqref{eq:cl}.

(iii) is a consequence of Lemmas~\ref{le:chP} and~\ref{le:chE} (see also Lemma \ref{le:new}), (iv)).
\end{proof}

{\bf Step 4.}
For the sake of contradiction, assume
\begin{equation}\label{eq:contra}
\delta^{\frac 12}  \check{\mathcal N}^2(0)<\check b(0).
\end{equation}
We introduce the following bootstrap estimate
\begin{equation}\label{BS:lip}
\delta^{\frac 34}   \check{\mathcal N}^2 <\check b
\end{equation}
and we define
\begin{equation*}
\TS = \sup \left\{t>0\ \text{ such that } \eqref{BS:lip} \text{ holds on } [0,t]\right\}>0.
\end{equation*}
By \eqref{cz-a} and then \eqref{BS:lip}, there holds on $[0,\TS]$
\begin{equation}\label{on:checkb}
\frac{\ud }{\ud t} \check b \geq 2 \nu^+ \check b - C \delta \check{\mathcal N} \sqrt{b} \geq \nu^+ \check b.
\end{equation}
In particular, $\check b$ is positive, increasing on $[0,\TS)$ and 
\begin{equation}\label{checkb:exp} 
\forall t \in [0,\TS), \quad b(t) \ge e^{\nu^+ t} \check b(0).
\end{equation}

By~\eqref{on:cF} and next~\eqref{BS:lip},
\begin{equation*}
\frac{\ud }{\ud t} [ e^{2\mu t}\check{\mathcal F}] 
 \lesssim e^{2\mu t} \delta \check{\mathcal N}^2 \lesssim  e^{2\mu t} \delta^{\frac 14} \check b.
 \end{equation*}
Integrating on $[0,t]$, for any $t\in [0,\TS)$ and using that $\check b$ is increasing, we obtain
\begin{equation*}
\check{\mathcal F}(t)\lesssim  e^{-2\mu t}\check{\mathcal F}(0) +  \delta^{\frac 14}\check b(t),
\end{equation*}
and thus using \eqref{eq:contra},
\begin{equation*}
\delta^{\frac 34} \check{\mathcal F}(t) \lesssim  \delta^{\frac 34}\check{\mathcal F}(0) + \delta\check b(t)
\lesssim \delta^{\frac 14} (\check b(0)+\check b(t))\lesssim \delta^{\frac 14} \check b(t).
\end{equation*}
Using similar argument, we check that
\begin{equation}\label{eq:chell}
\delta^{\frac{3}{4}}\big| \ell_{k}(t)\big|^{2}\lesssim \delta^{\frac{1}{4}}\check{b} (t)\quad \mbox{for}\  k=1,2.
\end{equation}
From~\eqref{eq:cy}, next \eqref{BS:lip} and then~\eqref{on:checkb},
\begin{equation*}
\bigg|\frac{\ud }{\ud t}|{\check y}_{k}|^2\bigg|
\lesssim \delta \check{\mathcal N}^2\lesssim \delta^{\frac 14} \check b
\lesssim \delta^{\frac 14} \frac{\ud }{\ud t} \check b.
\end{equation*}
Integrating on $[0,t]$, for any $t\in [0,\TS)$,
\begin{equation*}
|{\check y}_{k}(t)|^2 \lesssim |{\check y}_{k}(0)|^2 +  \delta^{\frac 14} \check b(t).
\end{equation*}
Thus, using~\eqref{eq:contra} and~\eqref{eq:chell},
\begin{equation*}
\delta^{\frac 34} |{\check z}_{k}(t)|^2\lesssim \delta^{\frac 34} |{\check y}_{k}(t)|^2 + \delta^{\frac 34} |{\check \ell}_{k}(t)|^2
\lesssim \delta^{\frac 14} \check b(t).
\end{equation*}
Using \eqref{eq:comp}, we obtain
\[ \delta^{3/4} \check{\mathcal N}^2 \lesssim \delta^{1/4}  \check b \]
on $[0,\TS)$, and so we have strictly improved estimate~\eqref{BS:lip} for $\delta$ small enough.

By a continuity argument it follows that $\TS=+\infty$. This is contradictory since the exponential growth~\eqref{checkb:exp} would lead to
unbounded $\check b$.

In conclusion, we have just proved that $\check b(0)\leq \delta^{\frac 12}  \check{\mathcal N}^2(0)$. 
Observing 
\begin{equation*}
|a_{k}^+(0)-\tilde a_{k}^+(0)|\lesssim |\check a_{k}^+(0)|+|\langle \vec{\tilde \e}(0),(\vec Z_{k}^+(0)-\vec {\tilde Z}_{k}^+(0)\rangle|
\lesssim \delta^{\frac 14}  \check{\mathcal N}(0),
\end{equation*}
the proof of Proposition~\ref{pr:lip} is complete.
\end{proof}

\subsection{Modulation at initial time}

We introduce some notation related to   initial data as written in the statement of~Theorem~\ref{th:3}
\begin{equation*}
\Omega=(L,\vev), \quad \|\Omega\|=|L|+\|\vev\|_\ENE,\quad h=(h_1,h_2),
\end{equation*}
and as written in Propositions~\ref{pr:exist} and~\ref{pr:lip}
\begin{align*}
& \Gamma=(z_1,z_2,\ell_1,\ell_2,\vvep), \quad \|\Gamma\|=|(z_1,z_2)|+|(\ell_1,\ell_2)|+\|\vvep\|_\ENE,
\quad a^+=(a_{1}^+,a_{2}^+).
\end{align*}
For $\delta>0$, we denote by $\mathcal V_{\delta}^\perp$ the set of $\Omega = (L,\vev)$ satisfying
\begin{equation*}
\left\{\begin{aligned}
&L \in \RR^N \text{ with } |L|> 10|\log \delta|, \\
&\vev\in \mathcal B_\ENE(\delta) \text{ with } \langle \vev,\vec Z^+(\cdot\pm\tfrac L2)\rangle=0, 
\end{aligned}\right.\end{equation*}
and by $\mathcal W_{\delta}^\perp$ the set of $\Gamma=(z_1,z_2,\ell_1,\ell_2,\vvep)$ satisfying
\begin{equation*}\left\{\begin{aligned}
& (z_1,z_2)\in \RR^{2N}   \text{ with }  |z_1-z_2|> 5|\log \delta|,\\
&( \ell_1,\ell_2)\in \mathcal B_{\RR^{2N}}(\delta),\\
& \vvep\in \mathcal B_\ENE(\delta)  \text{ with \eqref{ortho} and $\langle \vvep,\vec Z_k^+\rangle=0$ for $k=1,2$.}
\end{aligned}\right.\end{equation*}

In the statement of Theorem~\ref{th:3}, we do not require the orthogonality conditions $\langle \vev,\vec Z^+(\cdot\pm\tfrac L2)\rangle=0$ as in the definition of the set $\mathcal V_{\delta}^\perp$. Those conditions are obtained by adjusting $(h_{1},h_{2})$ in a second step, see the proof of Theorem~\ref{th:3}. We start with constructing the ``manifold''  for data in $\mathcal V_{\delta}^\perp$, which somehow corresponds to the tangent space and where the discrepancy is superlinear in $\delta$.

\begin{lemma}\label{le:initial}
Assume $\sigma_{1}=1$ and $\sigma_{2}=-1$. There exists $C>0$ such that for all $\delta>0$ small enough, 
and  for any 
$(\Omega,h)\in \mathcal V_{\delta}^\perp\times \mathcal{B}_{\RR^2}(\delta)$,
there exist unique $\Gamma[\Omega,h]  \in \mathcal W_{C\delta}^\perp $ and $a^+[\Omega,h]\in \mathcal{B}_{\RR^2}(C\delta)$ 
such that
\begin{gather}
(\vec Q+h_1 \vec Y^+)(\cdot-\tfrac L2)-(\vec Q+h_2 \vec Y^+)(\cdot +\tfrac L2)+\vev
=\vec Q_1+\vec Q_2+\vec W(a^+)+\vvep,\\
|\beta a^+ - h| \lesssim \delta^{2}. \label{est:a-h}
\end{gather}
Moreover, for any $(\Omega,h)$, $(\tilde \Omega,\tilde h)\in \mathcal V_{\delta}^\perp\times \mathcal{B}_{\RR^2}(\delta)$,
with $|L-\tilde L|<\delta$, it holds
\begin{align}
\|\Gamma[\Omega,h]-\Gamma[\tilde \Omega,\tilde h]\| & \lesssim \|\Omega-\tilde \Omega\|+|h-\tilde h|, \label{est:Gamma_diff} \\
|(\beta a^+[\Omega,h]- h)-(\beta a^+[\tilde \Omega,\tilde h]- \tilde h)| & \lesssim \delta ( \|\Omega-\tilde \Omega\|+|h-\tilde h| ). \label{est:a-h_diff}
\end{align}
\end{lemma}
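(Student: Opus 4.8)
The plan is to build $\Gamma[\Omega,h]$ in two stages. First I would apply the fixed-time modulation underlying Lemma~\ref{le:dec}~(i) to $\vec u(0)$: since $|h_1|+|h_2|<\delta$, $\|\vev\|_\ENE<\delta$ and $\|\vec Y^{\pm}\|_\ENE\lesssim 1$, the datum $\vec u(0)$ is $O(\delta)$-close in $H^1\times L^2$ to $\vec Q(\cdot-\tfrac L2)-\vec Q(\cdot+\tfrac L2)$, and since $|L|>10|\log\delta|$ it satisfies~\eqref{for:dec} for a suitable $\gamma=\gamma(\delta)$ once $\delta$ is small. This produces unique $(z_1,z_2,\ell_1,\ell_2)$ and a remainder $\vve=(\e,\eta)$ satisfying~\eqref{ortho}, with $|\ell_1|+|\ell_2|+\|\vve\|_\ENE\lesssim\delta$ and, by~\eqref{key1}, $|z_1-\tfrac L2|+|z_2+\tfrac L2|\lesssim\delta$, hence $|z_1-z_2|\ge|L|-C\delta>5|\log(C\delta)|$. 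I then set $a_k^+=\langle\vve,\vec Z_k^+\rangle$ (so $|a^+|\lesssim\|\vve\|_\ENE\lesssim\delta$) and $\vvep:=\vve-\vec W(a^+)$; by Lemma~\ref{le:W}, $\vec W(a^+)$ is orthogonal to the $\partial_{x_j}Q_k$ and satisfies $\langle\vec W(a^+),\vec Z_k^+\rangle=a_k^+$, so $\vvep$ satisfies both~\eqref{ortho} and $\langle\vvep,\vec Z_k^+\rangle=0$ with $\|\vvep\|_\ENE\lesssim\delta$. Thus $\Gamma[\Omega,h]=(z_1,z_2,\ell_1,\ell_2,\vvep)\in\mathcal W_{C\delta}^\perp$ and $a^+\in\mathcal B_{\RR^2}(C\delta)$, and uniqueness is inherited from the uniqueness of the modulation and of the $\vec W$-splitting.

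Next, for the quadratic estimate~\eqref{est:a-h}, I would write $z_1=\tfrac L2+\mu_1$, $z_2=-\tfrac L2+\mu_2$ with $|\mu_k|\lesssim\delta$ and expand $\beta a_k^+=\beta\langle\vec u(0)-\vec Q_1-\vec Q_2,\vec Z_k^+\rangle$ term by term. The position-mismatch pieces $\vec Q(\cdot-\tfrac L2)-\vec Q_1$ and $-\vec Q(\cdot+\tfrac L2)-\vec Q_2$, after Taylor expansion in $(\mu_k,\ell_k)$, have a linear part that pairs to $O(\delta^2)$ against $\vec Z_k^+$ because $\langle Y,\partial_{x_j}Q\rangle=0$ by parity; the $h_k$ piece centered at the matching soliton gives $h_k\langle\vec Y^+,\vec Z^+\rangle=h_k/\beta$ up to $O(\delta^2)$, the first-order correction being a multiple of $\langle Y,\partial_{x_j}Y\rangle=0$; the $\vev$ piece gives $\beta\langle\vev,\vec Z^+(\cdot\mp\tfrac L2)\rangle=0$ by the orthogonality built into $\mathcal V_\delta^\perp$, up to an $O(\|\vev\|\,|\mu_k|)=O(\delta^2)$ error from moving the center from $\pm\tfrac L2$ to $z_k$; and all interactions between objects localized near $\tfrac L2$ and near $-\tfrac L2$ are $O(e^{-c|L|})$ since $|L|>10|\log\delta|$. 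Summing the contributions yields $|\beta a_k^+-h_k|\lesssim\delta^2$.

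For the Lipschitz bounds I would first note that $(\Omega,h)\mapsto\vec u(0)$ is Lipschitz into $H^1\times L^2$ with $\|\vec u(0)-\vec{\tilde u}(0)\|_\ENE\lesssim\|\Omega-\tilde\Omega\|+|h-\tilde h|$ (using $|L-\tilde L|<\delta$ and the Lipschitz continuity of translations on $H^1\times L^2$). Then~\eqref{key1}--\eqref{key2} control the differences of $(z_k,\ell_k)$ and of $\vve$ by $\|\Omega-\tilde\Omega\|+|h-\tilde h|$; the identity $a_k^+=\langle\vve,\vec Z_k^+\rangle$ (whose extra contribution from $\vec Z_k^+-\vec{\tilde Z}_k^+$ is $O(\delta)$ times a position difference) and the smoothness of $\vec W$ in $(z_1,z_2,\ell_1,\ell_2)$ from Lemma~\ref{le:W} then control $a^+$ and $\vvep$, which gives~\eqref{est:Gamma_diff}. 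For~\eqref{est:a-h_diff}, the gain of one power of $\delta$ comes from the structure exhibited in the previous step: $\beta a_k^+-h_k$ is a finite sum of terms each of which is a product of two quantities, at least one of size $O(\delta)$ and drawn from $\{\mu_1,\mu_2,\ell_1,\ell_2,h_1,h_2,\vev\}$ together with factors $e^{-c|L|}$; applying the product rule and the Lipschitz bounds already established for each factor produces the estimate.

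I expect the main obstacle to be the bookkeeping in the second step: organizing the expansion of $\beta a_k^+$ so that \emph{every} $O(\delta)$ contribution is seen to cancel. This uses precisely the orthogonality conditions $\langle\vev,\vec Z^+(\cdot\pm\tfrac L2)\rangle=0$ that define $\mathcal V_\delta^\perp$, the parity identities $\langle Y,\partial_{x_j}Q\rangle=\langle Y,\partial_{x_j}Y\rangle=0$, and the exponential separation of the two solitons; and it is this quadratic structure that must then be transferred to obtain~\eqref{est:a-h_diff}.
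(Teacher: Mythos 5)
Your proposal is correct and follows essentially the same route as the paper's own proof: fixed-time modulation via Lemma~\ref{le:dec}~(i) together with \eqref{key1}--\eqref{key2} for the Lipschitz estimates on $(z_k,\ell_k,\vve)$, then the split $a_k^+=\langle\vve,\vec Z_k^+\rangle$, $\vvep=\vve-\vec W(a^+)$, and finally the projection of the matching identity onto $\vec Z_k^+$ exploiting $\langle\partial_{x_j}Q,Y\rangle=0$, the orthogonality defining $\mathcal V_\delta^\perp$, and the smallness $|z_1-\tfrac L2|+|z_2+\tfrac L2|\lesssim\delta$ and $e^{-c|L|}\lesssim\delta^{2}$ to see the quadratic structure. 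The paper carries out the Lipschitz estimate \eqref{est:a-h_diff} by writing down an explicit algebraic identity for the difference (using \eqref{eq:oo1}--\eqref{eq:oo2}) and projecting it on $\vec Z_k^+$, whereas you phrase the same point as a product rule applied to the bilinear error terms; that is a stylistic difference, not a gap.
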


\begin{proof}
First, (i) of Lemma~\ref{le:dec} implies the existence of $z_1,z_2,\ell_1,\ell_2,\vve$ such that
\begin{equation}\label{pluie}
(\vec Q+h_1 \vec Y^+)(\cdot-\tfrac L2)-(\vec Q+h_2 \vec Y^+)(\cdot +\tfrac L2)+\vev=
 \vec Q_1+\vec Q_2+\vve \end{equation}
where
\begin{equation*}
\|\vve\|_{\ENE}+\sum_{k=1,2}|\ell_{k}|+e^{-2|z_{1}-z_{2}|}\lesssim \delta
\end{equation*}
and  $\vve$ satisfies the orthogonality relations \eqref{ortho}. Using~\eqref{eq:oo1} and projecting~\eqref{pluie} on $\partial_{x_j}Q_{k}$ for $k=1,2$,
we find
\begin{equation*}
|z_1-\tfrac L2|+|z_{2}+\tfrac L2|\lesssim \delta.
\end{equation*}
Hence $|z| \ge L - 2 \delta^{\frac{1}{2}} \ge 5 | \log \delta|$ for $\delta>0$ small enough.

Moreover, \eqref{key1} and \eqref{key2} provide the Lipschitz estimates
\begin{equation}\label{lipest1}
\sum_{k=1,2}\{|z_{k}-\tilde z_{k}|+|\ell_{k}-\tilde\ell_{k}|\}+\|\vve-\vec{\tilde \e}\,\|_{\ENE}\lesssim 
\|\Omega-\tilde \Omega\|+|h-\tilde h|.
\end{equation}
Second, define
\begin{equation*}
a_{k}^+ = \langle \vve,\vec Z_{k}^+\rangle\quad \text{and}\quad \vvep=\vve-\vec W(a_1^+,a_2^+)=\vve-\vec W(a^+),
\end{equation*}
so that $\vvep$ satisfies $\langle \vvep,\vec Z_{k}^+\rangle=0$ for $k=1,2$. Using the definition of $\vvep$ and~\eqref{lipest1}, we obtain 
\begin{align*}
&\sum_{k=1,2}\{|z_{k}-\tilde z_{k}|+|\ell_{k}-\tilde\ell_{k}|\}+\|\vvep-\vec{\tilde \e}_{\perp}\,\|_{\ENE}\\
&\lesssim\sum_{k=1,2}\{|z_{k}-\tilde z_{k}|+|\ell_{k}-\tilde\ell_{k}|\}+\|\vve-\vec{\tilde \e}\,\|_{\ENE}\lesssim 
\|\Omega-\tilde \Omega\|+|h-\tilde h|,
\end{align*}
which is~\eqref{est:Gamma_diff}.
Let us now project \eqref{pluie} on $\vec Z_1^+$. Using the expansion
\[ Q(\cdot - \tfrac L2) - Q_1 = (z_1 - \tfrac L2) \cdot \nabla Q(\cdot - \tfrac L2) + O_{L^2}(\delta^2), \]
the fact that $\langle \partial_{x_j} Q, Y \rangle =0$ for all $j=1, \dots, N$, we get
\[  |\langle \vec Q(\cdot - \tfrac L2) - \vec Q_1, \vec{Z}_1^+ \rangle| \lesssim \delta^2. \]
Similarly,
\[ |\langle \vec Q(\cdot + \tfrac L2) - \vec Q_2, \vec{Z}_1^+ \rangle| = \langle (z_2 +  \tfrac L2) \cdot \nabla Q(\cdot + \tfrac L2), \vec{Z}_1^+ \rangle + O(\delta^2) \lesssim \delta e^{-\frac{1}{2} |z|} +O(\delta^{2})\lesssim \delta^2. \]
Using the assumption $\langle \vev,\vec Z^+(\cdot\pm\tfrac L2)\rangle=0$, we infer
\begin{equation*}
\left|  a^+_{1}- h_{1} \langle \vec Y^+, \vec Z^+ \rangle \right| \lesssim \delta^{2}.
\end{equation*}
One can argue in the same way upon projecting  \eqref{pluie} on $\vec Z_2^+$, and so derive \eqref{est:a-h}.

It remains to prove \eqref{est:a-h_diff}. From \eqref{est:Gamma_diff}, we have
\begin{equation*}
\sum_{k=1,2} |a_{k}^+-\tilde a_{k}^+|+\|\vvep-\vec{\tilde \e}_{\perp}\,\|_{\ENE}\lesssim 
\|\Omega-\tilde \Omega\|+|h-\tilde h|.
\end{equation*}
Using~\eqref{eq:oo1}-\eqref{eq:oo2}, observe as above that the following relation holds:
\begin{align*}
&\big[-\tfrac {\check L}2\cdot (\nabla \vec Q+h_{1}\nabla  \vec Y^+)+\check h_{1} \vec Y^+\big](\cdot-\tfrac{\tilde L}2)
+\big[\tfrac {\check L}2\cdot (\nabla \vec Q+h_{2}\nabla \vec Y^+)-\check h_{2}\vec Y^+\big](\cdot+\tfrac{\tilde L}2)\\
&=\Big(\begin{smallmatrix}\check z_{1}\cdot \nabla  \\ 
- (\check z_{1}\cdot\nabla)(\ell_{1}\cdot \nabla) 
+(\check \ell_{1}\cdot \nabla) \end{smallmatrix}\Big)Q(x-z_{1})
-\Big(\begin{smallmatrix}\check z_{2}\cdot \nabla \\ 
-(\check z_{2}\cdot\nabla)(\ell_{2}\cdot \nabla)
+(\check \ell_{2}\cdot \nabla)\end{smallmatrix}\Big)Q(x-z_{2})\\
&\quad +\vec{W}(\check a^+)-\vec{\check W}(\tilde a^+)
+\vec{\check \e}_{\perp}-\vec{\check\phi}+O_{L^2}(|\check L|^2+|\check h|^2+|(\check z_{1},\check z_{2})|^2).
\end{align*}
Note that we use the notation from \eqref{eq:ch} and
\[ \check W(a) := \sum_{k=1,2} \bigg\{ B_k(a) \check Y_k + \sum_{j=1}^N V_{k,j}(a) \partial_{x_j} \check Q_k \bigg\},\quad \vec{\check{W}}(a):=
\left(\begin{matrix}\check{W}(a)\\
\nu \check{W}(a)\end{matrix}\right).\]
Projecting this relation on $\vec Z_{k}^+$, using the orthogonality relations on $\vvep$, $\vec{\tilde \e}_{\perp}$, $\vec \phi$, $\vec{\tilde \phi}$ and the same argument as in the proof of \eqref{est:a-h}, we find
\[ |\beta( a_{k}^+-\tilde{a}_{k}^{+})-( h_{k}-\tilde{h}_{k})|\lesssim\delta \left(\|\Omega-\tilde \Omega\|+|h-\tilde h|\right)\quad \mbox{for}\ k=1,2,\]
which completes the proof of the lemma.
\end{proof}

\subsection{End of the proof of Theorem~\ref{th:3}}

We first classify $2$-solitary wave whose initial data is chosen so that $\Omega = (L, \vev) \in \mathcal V_{\delta}^\perp$.

\begin{proposition}\label{pr:implicit}
For all $\delta>0$ small enough,
there exists a  map
$H: \mathcal V_{\delta}^\perp \to \bar{\mathcal{B}}_{\RR^2}(\delta)$
such that, given $\Omega = (L, \vev) \in \mathcal V_{\delta}^\perp$ and $h=(h_1,h_2) \in \mathcal B_{\RR^2}(\delta)$, the solution $\vec u$ with initial data
\[ \vec u(0) = (\vec Q+h_1 \vec Y^+)(\cdot-\tfrac L2)-(\vec Q+h_2 \vec Y^+)(\cdot +\tfrac L2)+\vev \]
is a $2$-solitary wave if and only if $h = H(\Omega)$.
Moreover, for any $\Omega$, $\tilde \Omega \in \mathcal V_{\delta}^\perp$ such that $\| \Omega - \tilde \Omega \| < \delta$,
\begin{equation} \label{est:H_lip_Vd}
|H(\Omega)-H(\tilde \Omega)|\lesssim \delta^{\frac 14} \|\Omega-\tilde \Omega\|.
\end{equation}
\end{proposition}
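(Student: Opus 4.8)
The plan is to build $H$ by combining the three results of this section: Lemma~\ref{le:initial} to recognise the data of Theorem~\ref{th:3} as a modulated profile, Proposition~\ref{pr:exist} for existence, and Proposition~\ref{pr:lip} for uniqueness and the Lipschitz bound.

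\emph{Existence.} Fix $\Omega=(L,\vev)\in\mathcal V_\delta^\perp$. For $h\in\RR^2$ with $|h|$ small, Lemma~\ref{le:initial} rewrites $\vec u(0)=(\vec Q+h_1\vec Y^+)(\cdot-\tfrac L2)-(\vec Q+h_2\vec Y^+)(\cdot+\tfrac L2)+\vev$ as $\vec Q_1+\vec Q_2+\vec W(a^+[\Omega,h])+\vvep$ with $\Gamma[\Omega,h]\in\mathcal W_{C\delta}^\perp$, and \eqref{est:a-h} gives $\beta a^+[\Omega,h]=h+O(\delta^2)$, so $h\mapsto a^+[\Omega,h]$ is a bi-Lipschitz homeomorphism of a ball around $0$ onto a topological ball $O(\delta^2)$-close to $\bar{\mathcal B}_{\RR^2}(\delta^{5/4})$. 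I would then rerun the bootstrap construction of Proposition~\ref{pr:exist} \emph{verbatim}, but using $h$ as the shooting parameter in place of $(a_1^+(0),a_2^+(0))$: the estimates improving the bounds on $\mathcal N$ and $q(r)$ in \eqref{BS:2} only use $\Gamma[\Omega,h]\in\mathcal W_{C\delta}^\perp$ and hold uniformly in $h$ and $\Omega$, while the transversality $\dot b\ge\nu^+\delta^{5/2}>0$ whenever $b=\delta^{5/2}$ makes the exit time $\TS(h)$ continuous and equal to $0$ on the (topological) sphere $\{\,|a^+[\Omega,h]|=\delta^{5/4}\,\}$, where $b(0)=\delta^{5/2}$. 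If $\TS(h)<\infty$ on the whole ball, composing $h\mapsto a^+[\Omega,h](\TS(h))\in\mathcal S_{\RR^2}(\delta^{5/4})$ with the inverse of $h\mapsto a^+[\Omega,h]$ yields a retraction of a closed ball onto its boundary sphere, a contradiction. For the resulting $h$ with $\TS(h)=+\infty$, \eqref{BS:2} holds for all times and $\mathcal N(t)\to0$, hence $\vec u$ is a $2$-solitary wave; moreover $|h|\lesssim\delta^{5/4}\le\delta$.

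\emph{Uniqueness and Lipschitz dependence.} Let $\Omega,\tilde\Omega\in\mathcal V_\delta^\perp$ with $\|\Omega-\tilde\Omega\|<\delta$, and suppose $h,\tilde h\in\bar{\mathcal B}_{\RR^2}(\delta)$ both produce $2$-solitary waves (for uniqueness, take $\tilde\Omega=\Omega$). The corresponding initial data are of the form required in Proposition~\ref{pr:lip}, with modulated parameters $\Gamma[\Omega,h]$ and $\Gamma[\tilde\Omega,\tilde h]$. Proposition~\ref{pr:lip} combined with \eqref{est:Gamma_diff} gives
\[\sum_{k=1,2}\big|a_k^+(0)-\tilde a_k^+(0)\big|\lesssim\delta^{1/4}\,\big\|\Gamma[\Omega,h]-\Gamma[\tilde\Omega,\tilde h]\big\|\lesssim\delta^{1/4}\big(\|\Omega-\tilde\Omega\|+|h-\tilde h|\big),\]
while \eqref{est:a-h_diff} yields $|h_k-\tilde h_k|\le\beta\,|a_k^+(0)-\tilde a_k^+(0)|+C\delta\big(\|\Omega-\tilde\Omega\|+|h-\tilde h|\big)$. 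Summing over $k$ and inserting the previous line gives $|h-\tilde h|\lesssim\delta^{1/4}\big(\|\Omega-\tilde\Omega\|+|h-\tilde h|\big)$, so for $\delta$ small the term $|h-\tilde h|$ on the right is absorbed, leaving $|h-\tilde h|\lesssim\delta^{1/4}\|\Omega-\tilde\Omega\|$. Taking $\tilde\Omega=\Omega$ forces $h=\tilde h$, so the $h$ constructed above is unique; setting $H(\Omega):=h$ then defines a map $\mathcal V_\delta^\perp\to\bar{\mathcal B}_{\RR^2}(\delta)$ satisfying \eqref{est:H_lip_Vd}.

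\emph{Main difficulty.} The delicate point is the existence step: Proposition~\ref{pr:exist} is stated with the translation/Lorentz/remainder parameters frozen, whereas here they move with the shooting parameter $h$ through $\Gamma[\Omega,h]$. The reason the argument still closes is precisely the content of Lemma~\ref{le:initial}: $h\mapsto(\Gamma[\Omega,h],a^+[\Omega,h])$ is Lipschitz and $\beta a^+[\Omega,h]=h+O(\delta^2)$, so on one hand all the quantitative bootstrap estimates remain uniform in $\Omega\in\mathcal V_\delta^\perp$, and on the other hand the $a^+$-coordinate depends on $h$ through a homeomorphism, which is exactly what is needed to turn the failure of $\TS\equiv+\infty$ into a forbidden retraction onto a sphere.
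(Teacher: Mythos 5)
Your proposal is correct but takes a genuinely different route from the paper. Both proofs use the same three ingredients (Lemma~\ref{le:initial}, Proposition~\ref{pr:exist}, Proposition~\ref{pr:lip}), but they are assembled differently. The paper first packages Propositions~\ref{pr:exist} and~\ref{pr:lip} into a single Lipschitz map $A^+:\mathcal W_\delta^\perp\to\mathcal B_{\RR^2}(C\delta^{5/4})$, so that in the modulated parametrization the $2$-solitary wave condition reads $a^+(0)=A^+(\Gamma(0))$; it then defines $H(\Omega)$ as the fixed point of the contraction $F_\Omega(h)=h-\beta\{a^+[\Omega,h]-A^+(\Gamma[\Omega,h])\}$ on $\bar{\mathcal B}_{\RR^2}(\delta)$, so that existence, uniqueness, and the Lipschitz estimate \eqref{est:H_lip_Vd} all drop out of the Banach fixed-point theorem in one stroke. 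You instead re-run the shooting argument of Proposition~\ref{pr:exist} with $h$ itself as the shooting parameter and $\Gamma[\Omega,h]$ moving alongside (after observing, via \eqref{est:a-h} and \eqref{est:a-h_diff}, that $h\mapsto a^+[\Omega,h]$ is bi-Lipschitz so the no-retraction argument still applies to the shooting domain $\{h:|a^+[\Omega,h]|\le\delta^{5/4}\}$), and you separately obtain uniqueness and the Lipschitz bound by feeding \eqref{est:Gamma_diff} and \eqref{est:a-h_diff} into Proposition~\ref{pr:lip} and absorbing the $|h-\tilde h|$ term. Your uniqueness/Lipschitz step is essentially the contraction property of $F_\Omega$ written out by hand, so the two proofs are close in content; what the paper's packaging buys is that it treats Proposition~\ref{pr:exist} entirely as a black box and avoids a second pass through the topological argument, whereas your version requires verifying — as you do, via Lemma~\ref{le:initial} — that the shooting is unaffected by the domain being only a bi-Lipschitz image of a ball and by the modulation parameters not being frozen. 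One small caution: the assertion that once $\TS=\infty$ one has $\mathcal N(t)\to0$ does not follow from the bootstrap alone (which only yields $\mathcal N\le\delta^{3/4}$); passing from ``the bootstrap holds for all $t\ge0$'' to ``the solution is a $2$-solitary wave'' requires the energy-dissipation argument to extract a sequence $t_n\to\infty$ along which $\|\vve(t_n)\|_\ENE+\sum_k|\ell_k(t_n)|\to0$, but the paper's Proposition~\ref{pr:exist} leaves this implicit as well, so it is not a defect specific to your approach.
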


\begin{proof}%[Proof of~Proposition~\ref{pr:implicit}]

First observe that from Propositions~\ref{pr:exist} and~\ref{pr:lip}, for any $\Gamma(0)\in \mathcal W_{\delta}^\perp$, there exists a unique $A^+(\Gamma(0))\in \mathcal B_{\RR^2}(C\delta^{\frac 54})$ such that the solution $\vec v$ of \eqref{nlkg} with initial data 
\begin{equation*}
\vec v(0)=\vec Q_1(0)+\vec Q_2(0)+\vec W(a_1^+(0),a_2^+(0)) +\vvep(0)
\end{equation*}
with $(a_1^+(0),a_2^+(0))\in \mathcal B_{\RR^2}(\delta)$
is a $2$-solitary wave if and only if $(a_1^+(0),a_2^+(0))=A^+(\Gamma(0))$.
Moreover, the following estimate holds
\begin{equation*}
|A^+(\Gamma(0))-A^+(\tilde \Gamma(0))|\lesssim \delta^{\frac 14} \|\Gamma(0)-\tilde \Gamma(0)\|.
\end{equation*}
Using the notation of Lemma \ref{le:initial}, this means that 
\[ \vec u \text{ is a $2$-solitary wave if and only if } a^+[\Omega,h]= A^+(\Gamma[\Omega,h]). \]
 Let us show that this condition can be written $h = H(\Omega)$ for some function $H$.

For $\Omega\in \mathcal V_{\delta}^\perp$ fixed, and $h\in\bar{\mathcal{B}}_{\RR^2}(\delta)$, define
\begin{equation*}
F_{\Omega}(h)=h- \beta \left\{ a^+[\Omega,h] -A^+(\Gamma[\Omega,h])\right\}.
\end{equation*}
First, we observe that $F_{\Omega}:\bar{\mathcal{B}}_{\RR^2}(\delta)\to \bar{\mathcal{B}}_{\RR^2}(\delta)$  is a contraction for $\delta>0$ small enough. Indeed, by Lemma~\ref{le:initial} and Proposition~\ref{pr:exist},
\begin{equation*}
|F_{\Omega}(h)|\leq |h- \beta a^+[\Omega,h]|+|A^+(\Gamma[\Omega,h])|\lesssim \delta^{\frac 54},
\end{equation*}
and
\begin{align*}
|F_{\Omega}(h)-F_{\Omega}(\tilde h)|
& \leq |(h-\tilde h)- \beta (a^+[\Omega,h]-a^+[\Omega,\tilde h])| + \beta |A^+(\Gamma[\Omega,h])-A^+(\Gamma[\Omega,\tilde h])|\\
&\lesssim \delta |h-\tilde h|+ \delta^{\frac 14} \|\Gamma[\Omega,h]- \Gamma[\Omega,\tilde h]\|
\lesssim \delta^{\frac 14} |h-\tilde h|.
\end{align*}
Therefore, by the fixed point theorem, there exists a unique $H= H(\Omega)$ in $\bar{\mathcal{B}}_{\RR^2}(\delta)$ such that
$F_{\Omega}(H)=H$, which is equivalent to 
$a^+[\Omega,H]=A^+(\Gamma[\Omega,H])$.

Second, for any $\Omega$, $\tilde \Omega \in \mathcal V_{\delta}^\perp$,
\begin{align*}
 |H(\Omega)-H(\tilde \Omega)| &=
|F_{\Omega}(H(\Omega))-F_{\tilde \Omega}(H(\tilde \Omega))|\\
&  \leq |F_{\Omega}(H(\Omega))-F_{\Omega}(H(\tilde \Omega))|+|F_{\Omega}(H(\tilde \Omega))-F_{\tilde \Omega}(H(\tilde \Omega))|\\
&  \leq \frac 12 |H(\Omega)-H(\tilde \Omega)|+|F_{\Omega}(H(\tilde \Omega))-F_{\tilde \Omega}(H(\tilde \Omega))|
\end{align*}
and so
\[ |H(\Omega)-H(\tilde \Omega)|\leq
2|F_{\Omega}(H(\tilde \Omega))-F_{\tilde \Omega}(H(\tilde \Omega))|. \]
By definition of $F_{\Omega}(h)$, Lemma~\ref{le:initial} and Proposition~\ref{pr:exist}, one has
\begin{align*}
|F_{\Omega}(h)-F_{\tilde \Omega}(h)|
&\leq \beta \left( | a^+[\Omega,h]-a^+[\tilde\Omega,h]|+|A^+(\Gamma[\Omega,h])-A^+(\Gamma[\tilde\Omega,h])| \right) \\
&\lesssim\delta\|\Omega-\tilde \Omega\|+ \delta^{\frac 14} \|\Gamma[\Omega,h]- \Gamma[\tilde \Omega, h]\|
\lesssim\delta^{\frac 14} \|\Omega-\tilde \Omega\|,
\end{align*}
which yields
$|H(\Omega)-H(\tilde \Omega)|\lesssim \delta^{\frac 14} \|\Omega-\tilde \Omega\|$.
\end{proof}

\begin{proof}[Proof of Theorem \ref{th:3}]
The map $H$ was constructed in Proposition \ref{pr:implicit} (locally) on a subspace. Our goal is now to extend it to the full open set given in the statement of the Theorem. Let $\delta>0$ to be fixed later. Given $(L,\vec \phi)$ such that $|L| \ge 10 |\log \delta|$ and $\| \vec \phi \|_{H^1 \times L^2} < \delta$, we decompose
\[ \vec \phi = - h_{1,\parallel} \vec Y^+( \cdot - \tfrac L2) +  h_{2,\parallel} \vec Y^+(\cdot + \tfrac L2) + \vec{\phi}_\perp, \quad \langle \vec \phi_\perp, \vec Z^+( \cdot \pm \tfrac L2) \rangle =0. \]
These conditions are a linear system on $ h_{k,\parallel} =  h_{k,\parallel}(L,\vec{\phi})$ of the form
\[ \begin{cases}
 h_{1,\parallel} = - \beta \langle \vec \phi,  \vec Z^+( \cdot - \tfrac L2) \rangle + O(e^{-L/2}  h_{2,\parallel}) \\
 h_{2,\parallel} = \beta \langle \vec \phi,  \vec Z^+( \cdot +\tfrac L2) \rangle + O(e^{-L/2}  h_{1,\parallel})
\end{cases} \]
which can be inverted for $\delta>0$ small enough, and furthermore, for any such $\delta$, one has
\begin{equation} \label{est:proj_Vd}
| h_{1,\parallel}| + | h_{2,\parallel}| + \| \vec{\phi}_{\perp} \|_{H^1 \times L^2} \le C \| \vec{\phi} \|_{H^1 \times L^2},
\end{equation}
and the Lipschitz estimates
\begin{equation}\label{est:diff:proj_Vd}
\sum_{k=1,2}| h_{k,\parallel}-\tilde{h}_{k,\parallel}| + \| \vec{\phi}_{\perp} -\vec{\tilde{\phi}}_{\perp} \|_{H^1 \times L^2} \le C\left(\big|L-\tilde{L}\big| +\| \vec{\phi}-\vec{\tilde{\phi}} \|_{H^1 \times L^2}\right),
\end{equation}
for some constant $C$ independent of small $\delta >0$. In particular, $\vec{\phi}_\perp \in \mathcal V_{C \delta}^\perp$ and up to lowering $\delta$, we can assume that Proposition \ref{pr:implicit} applies on that set. Observe that our initial data can be written
\[ \vec u(0) = \left( \vec{Q} + (h_1 -  h_{1,\parallel})  \vec Y^+ \right) \left( \cdot - \tfrac L2 \right) - \left( \vec{Q} + (h_2 -  h_{2,\parallel})  \vec Y^+ \right) \left( \cdot + \tfrac L2 \right) + \vec \phi_\perp. \]
Proposition \ref{pr:implicit} asserts that $\vec u$ is a $2$-solitary wave if and only if $H(L, \vec \phi_\perp) = (h_1 -  h_{1,\parallel}, h_2 -  h_{2,\parallel})$ or equivalently that $(h_{1,\parallel},h_{2,\parallel}) + H(L, \vec \phi_\perp) = (h_1,h_2)$.

We are therefore led to define the extension of $H$ as
\[ H(L, \vec \phi) :=  (h_{1,\parallel},h_{2,\parallel}) + H(L,\vec{\phi}_\perp). \]
(where $H(L,\vec{\phi}_\perp)$ is given in Proposition \ref{pr:implicit}). Due to~\eqref{est:H_lip_Vd},~\eqref{est:proj_Vd} and~\eqref{est:diff:proj_Vd},  $H(L,\vec{\phi})$ is a Lipschitz map. In conclusion, $H$ meets the requirements of Theorem \ref{th:3}.
\end{proof}


\begin{thebibliography}{10}
\bibitem{BJ}  P.W. Bates and C.K.R.T. Jones,
\emph{Invariant manifolds for semilinear partial differential equations.} Dynamics reported, Vol. 2, 1--38, Dynam. Report. Ser. Dynam. Systems Appl., 2, Wiley, Chichester, 1989.
\bibitem{BL} H. Berestycki and P.-L.~Lions, Nonlinear scalar field equations. I. Existence of a ground state. \emph{Arch. Rational Mech. Anal.} \textbf{ 82}, (1983) 313--345.
\bibitem{BRS} N. Burq, G. Raugel and W. Schlag,
Long time dynamics for damped Klein-Gordon equations.
\emph{Ann. Sci. \'Ec. Norm. Sup\'er.} (4) \textbf{50} (2017), no.~6, 1447--1498.
\bibitem{Co} V. Combet,
Multi-soliton solutions for the supercritical gKdV equations.
\emph{Comm. Partial Differential Equations} \textbf{36} (2011), no. 3, 380--419.
\bibitem{CM}
V. Combet and Y. Martel, Construction of multi-bubble solutions for the critical gKdV equation. 
\emph{ SIAM J. Math. Anal.}, 50(4) (2018), 3715--3790.
\bibitem{CMMgn} R.~C\^ote, Y.~Martel and F.~Merle,
Construction of multi-soliton solutions for the $L^2$-supercritical gKdV and NLS equations.
\emph{Rev. Mat. Iberoamericana} \textbf{27} (2011), 273--302.
\bibitem{CMkg} R.~C\^ote and C.~Mu\~noz,
Multi-solitons for nonlinear Klein-Gordon equations.
\emph{Forum of Mathematics}, Sigma. \textbf{2} (2014).
\bibitem{DKM} T.~Duyckaerts, C.~E.~Kenig and F.~Merle,
Classification of radial solutions of the focusing, energy-critical wave equation.
\emph{Cambridge J. Math.}, \textbf{1} (2013), 75--144.
\bibitem{DJKM} T.~Duyckaerts, H.~Jia, C.~E.~Kenig and F.~Merle,
Soliton resolution along a sequence of times for the focusing energy critical wave equation.
\emph{Geom. Funct. Anal.}, \textbf{27} (2017), no. 4, 798--862.
\bibitem{F94} E.~Feireisl,
Convergence to an equilibrium of semilinear wave equations on unbounded intervals.
\emph{Dynam.Systems Appl.} \textbf{3} (1994), 423--434.
\bibitem{F98} E.~Feireisl,
Finite energy travelling waves for nonlinear damped wave equations.
\emph{Quart. Appl. Math.} \textbf{56} (1998), no. 1, 55–70.
\bibitem{J} M. A. Jendoubi,
A remark on the convergence of global and bounded solutions for a semilinear wave equation on $\RR^N$. 
\emph{J. Dynam. Differential Equations} \textbf{14} (2002), no. 3, 589--596.
\bibitem{JJnon} J.~Jendrej,
Nonexistence of radial two-bubbles with opposite signs for the energy-critical wave equation
\emph{Ann. Sc. Norm. Super. Pisa Cl. Sci.} (5) Vol. XVIII (2018), 1--44.
\bibitem{JJ} J.~Jendrej,
Construction of two-bubble solutions for energy-critical wave equations.
\emph{Amer. J. Math.} \textbf{141} (2019), no. 1, 55--118.
\bibitem{JL} J.~Jendrej and A.~Lawrie, 
Two-bubble dynamics for threshold solutions to the wave maps equation.
\emph{Invent. Math.}  \textbf{213} (2018), no. 3, 1249–1325.
\bibitem{Jkdv} J. Jendrej,
Dynamics of strongly interacting unstable two-solitons for generalized Korteweg-de Vries
equations. Preprint 	arXiv:1802.06294
\bibitem{K} M. K. Kwong,
Uniqueness of positive solutions of $\Delta u-u+u^p=0$ in $\RR^n$.
\emph{Arch. Rational Mech. Anal.} \textbf{105} (1989), no. 3, 243--266.
\bibitem{KS} J. Krieger and W. Schlag, 
Stable manifolds for all monic supercritical focusing nonlinear Schr\"odinger equations in one dimension. \emph{J. Amer. Math. Soc.} \textbf{19} (2006), 815--920.
\bibitem{KNS} J. Krieger, K. Nakanishi and W. Schlag,
Global dynamics above the ground state energy for the one-dimensional NLKG equation.
\emph{Math. Z.} \textbf{272} (2012), no. 1-2, 297--316.
\bibitem{LZ} Z. Li and L. Zhao,
Asymptotic decomposition for nonlinear damped Klein-Gordon equations,
Preprint arXiv:1511.00437.
\bibitem{MMwave} Y. Martel and F. Merle,
Construction of multi-solitons for the energy-critical wave equation in dimension 5. 
\emph{Arch. Ration. Mech. Anal.} \textbf{222} (2016), no. 3, 1113--1160.
\bibitem{MaMeNaRa}
Y.~Martel, F.~Merle, K.~Nakanishi and P.~Rapha\"el,
Codimension one threshold manifold for the critical gKdV equation,
\emph{Comm. Math. Phys.}, \textbf{342} (2016), 1075--1106.
\bibitem{MN} Y. Martel and T. V. Nguyen,
Construction of 2-solitons with logarithmic distance for the one-dimensional cubic Schr\"odinger system.
Preprint arXiv:1903.07175
\bibitem{Miura} R.~M.~Miura,
The Korteweg-de Vries equation, a survey of results.
\emph{SIAM Review} \textbf{18} (1976), 412--459.
\bibitem{NS1} K.~Nakanishi and W.~Schlag,
\emph{Invariant manifolds and dispersive Hamiltonian evolution equations.}
Z\"urich Lectures in Advanced Mathematics. European Mathematical Society (EMS), Z\"urich, 2011.
\bibitem{TVNkdv} T. V. Nguyen,
Strongly interacting multi-solitons with logarithmic relative distance for the gKdV equation.
\emph{ Nonlinearity}, 30(12):4614, 2017.
\bibitem{TVN} T. V. Nguyen,
Existence of multi-solitary waves with logarithmic relative distances for the NLS equation. 
\emph{C. R. Math. Acad. Sci. Paris} \textbf{357} (2019), no. 1, 13--58.
\bibitem{ZS}
T.~Zakharov and A.B.~Shabat.
Exact theory of two-dimensional self-focusing and one-dimensional self-modulation of waves in nonlinear media.
\emph{ Sov. Phys. JETP} 34 (1972), 62--69. 
\end{thebibliography}
\end{document}